\DeclareMathOperator{\R}{\mathbb{R}}
\DeclareMathOperator{\N}{\mathbb{N}}
\DeclareMathOperator{\Z}{\mathbb{Z}}
\DeclareMathOperator{\e}{\varepsilon}
\DeclareMathOperator{\DD}{\mathcal{D}}
\DeclareMathOperator{\PP}{\mathcal{P}}
\DeclareMathOperator{\hdim}{dim_H}
\DeclareMathOperator{\pdim}{dim_P}
\DeclareMathOperator{\supp}{supp}
\newtheorem{theorem}{Theorem}[section]
\newtheorem{conjecture}[theorem]{Conjecture}
\newtheorem{lemma}[theorem]{Lemma}
\newtheorem{definition}[theorem]{Definition}
\newtheorem{prop}[theorem]{Proposition}
\numberwithin{equation}{section}
\begin{document}

%------
% Insert the title of your paper and (if necessary)
% a short title for the running head.
%------
\title[Furstenberg and Falconer]{Slices and distances: on two problems of Furstenberg and Falconer}

\author{Pablo Shmerkin}

\address{Department of Mathematics, the University of British Columbia, Canada}
\email{pshmerkin@math.ubc.ca}
\urladdr{http://pabloshmerkin.org}
\thanks{This work was partially supported by an NSERC discovery grant and by Project PICT 2015-3675 (ANPCyT)}

\begin{abstract}
We survey the history and recent developments around two decades-old problems that continue to attract a great deal of interest: the slicing $\times 2$, $\times 3$ conjecture of H. Furstenberg in ergodic theory, and the distance set problem in geometric measure theory introduced by K. Falconer. We discuss some of the ideas behind our solution of Furstenberg's slicing conjecture, and recent progress in Falconer's problem. While these two problems are on the surface rather different, we emphasize some common themes in our approach: analyzing fractals through a combinatorial description in terms of ``branching numbers'', and viewing the problems through a ``multiscale projection'' lens.
\end{abstract}

\maketitle

%------
% INSERT THE BODY OF THE PAPER HERE (except
% acknowledgments, funding info and bibliography)
%------

\section{Introduction}

In this article we survey recent progress on the following two old conjectures. Hausdorff dimension is denoted $\hdim$.

\begin{conjecture}[{Furstenberg's slicing conjecture, \cite{Furstenberg70}}] \label{conj:Furstenberg}
Let $X,Y\subset [0,1)$ be closed and invariant under $T_a, T_b$ respectively, where $T_m(x)=mx\bmod 1$ is multiplication by $m$ on the circle. Assume that $\log a/\log b$ is irrational. Then
\[
\hdim( (X\times Y)\cap \ell) \le \max(\hdim(X)+\hdim(Y)-1,0)
\]
for all lines $\ell$ that are neither vertical nor horizontal.
\end{conjecture}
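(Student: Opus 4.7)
The plan is to reduce the slicing conjecture to a uniform-in-$u$ lower bound on the $L^q$ dimension of a family of convolution measures, and then to prove that uniform bound via an inverse theorem that exploits the Diophantine condition $\log a/\log b\notin\mathbb{Q}$.

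A slice of $X\times Y$ by a non-degenerate line $y=sx+t$ is bi-Lipschitz equivalent to $X\cap((Y-t)/s)$, so after the affine change $u=1/s$, $r=-t/s$ the conjecture is equivalent to
\[
\hdim\bigl(X\cap(uY+r)\bigr)\;\le\;\max(\hdim X+\hdim Y-1,\,0)\qquad\text{for all }u\neq 0,\ r\in\R.
\]
\textbf{Step 1: passage to measures.} By a classical result of Furstenberg, one may choose $T_a$-invariant, resp.\ $T_b$-invariant, ergodic measures $\mu_X,\mu_Y$ on $X,Y$ with $\hdim\mu_X=\hdim X$ and $\hdim\mu_Y=\hdim Y$. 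Let $\pi_u(x,y)=x+uy$ and set $\nu_u:=(\pi_u)_*(\mu_X\times\mu_Y)=\mu_X*(S_u)_*\mu_Y$, where $S_u$ denotes scaling by $u$. A standard Frostman--Marstrand slicing inequality bounds
\[
\hdim\bigl(X\cap(uY+r)\bigr)\;\le\;\hdim\mu_X+\hdim\mu_Y-\underline{\dim}_q(\nu_u)
\]
for $\nu_u$-a.e.\ $r$, where $\underline{\dim}_q$ denotes the lower $L^q$ dimension. The $T_a$- and $T_b$-invariance of $X,Y$ make the family $\{X\cap(uY+r):r\in\R\}$ dynamically homogeneous, which lets one upgrade this a.e.\ bound to every $r$. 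The conjecture therefore reduces to showing
\[
\underline{\dim}_q(\nu_u)\;\ge\;\min(\hdim\mu_X+\hdim\mu_Y,\,1)\qquad\text{for every }u\neq 0.
\]

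\textbf{Step 2: uniform $L^q$ projection bound.} This is the heart of the argument. Classical projection theorems yield such a bound only for Lebesgue-a.e.\ $u$. To obtain it for every $u\neq 0$, I would invoke an inverse theorem for $L^q$ norms of convolutions, in the spirit of Hochman's entropy inverse theorem: a deficit in $\underline{\dim}_q(\nu_u)$ at the target value must force, at a positive-density set of scales $n$, a rigid affine-arithmetic correlation between the level-$a^{-n}$ branching structure of $\mu_X$ and the comparable-scale branching of $(S_u)_*\mu_Y$. However, $\mu_X$ is self-similar precisely along the progression $\{a^{-n}\}$ and $\mu_Y$ along $\{b^{-m}\}$, and $\log a/\log b\notin\mathbb{Q}$ makes these two sequences of scales mutually equidistributed modulo any fixed linear rescaling; no single $u$ can synchronise them, contradicting the required correlation and establishing the uniform bound.

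\textbf{Main obstacle.} The serious difficulty is precisely the inverse theorem in Step 2: formulating and proving a version sharp enough to convert a loss in $L^q$ dimension of a convolution into a rigid structural statement on the discretisations of $\mu_X,\mu_Y$, and then combining it with their self-similar branching to derive a contradiction from the Diophantine condition. Once this uniform $L^q$ bound is in hand, the remaining pieces---taking $q\downarrow 1$, passing from $\mu_X,\mu_Y$ back to $X,Y$ using that these measures realise the Hausdorff dimensions, and disposing of the trivial case $\hdim X+\hdim Y\le 1$---are essentially routine.
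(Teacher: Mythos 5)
Your high-level strategy is the one the paper actually follows: reduce the slice bound to a uniform-in-$u$ lower bound on the dimension of the projected measure $\mu_X \ast S_u\mu_Y$, and obtain that bound from an inverse theorem for the $L^q$ norms of convolutions. However, several concrete steps in your outline go in the wrong direction or are missing.

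First, the limit in $q$ is backwards. An a.e.-$r$ slicing bound is useless here since the conjecture is for \emph{all} lines; to get a bound on \emph{every} fiber of $\Pi_u$ one needs a Frostman exponent (i.e., an $L^\infty$ dimension) lower bound on $\Pi_u(\mu_X\times\mu_Y)$, together with a \emph{lower} Frostman bound $\mu(B(x,r))\ge c r^\alpha$ on $\supp\mu$ (Lemma~\ref{lem:Frostman-exp-to-small-fiber}). Since $D_\mu(q)$ is decreasing in $q$, you must send $q\to\infty$, not $q\downarrow 1$; and the inverse theorem genuinely fails at $q=\infty$, which is why one must run the whole argument at finite $q$ and take a limit at the very end. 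Second, choosing ergodic invariant measures of full dimension does not give the required lower Frostman bound (such measures typically satisfy only an upper Frostman bound a.e.). The paper instead enlarges $X,Y$ to slightly larger self-similar sets given by restricted digit sets, whose \emph{natural} self-similar measures satisfy $\mu(B(x,r))\ge cr^{\alpha}$ everywhere on the attractor, at the cost of an arbitrarily small increase $\e$ in dimension. Third, your mechanism for exploiting $\log a/\log b\notin\mathbb{Q}$ is too vague to close the argument. In the paper the irrationality enters in two precise ways: via \emph{unique ergodicity} of the rotation $\mathbf{T}:x\mapsto x+\log a \bmod \log b$ on $\mathbb{G}=[0,\log b)$, applied to the subadditive cocycle $\phi_{q,n}$, which upgrades an almost-every-$x$ computation of $D(q)$ to an every-$x$ conclusion; and via \emph{exponential separation} of the atoms of the discrete approximants $\nu_{x,n}$ (a Borel--Cantelli transversality argument over the one-parameter family $x\in\mathbb{G}$), which is what makes the a.e.\ computation of $D(q)$ possible at all. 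Without articulating the dynamical self-similarity $\nu_x=\nu_{x,n}\ast S_{a^{-n}}\nu_{\mathbf{T}^n x}$, the subadditive cocycle, unique ergodicity, and exponential separation, the "no single $u$ can synchronise them" heuristic does not yield a proof.
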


\begin{conjecture}[{Falconer's distance set problem, originating in \cite{Falconer85}}] \label{conj:Falconer}
Let $X\subset\R^d$, $d\ge 2$ be a Borel set with $\hdim(X)\ge d/2$. Let $\Delta(X)=\{ |x-y|:x,y\in X\}$. Then
\[
\hdim(\Delta(X)) = 1.
\]
\end{conjecture}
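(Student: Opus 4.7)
Since Conjecture \ref{conj:Falconer} remains open, my plan is to outline an approach that, following the same philosophy used for the slicing conjecture, aims to push the known bounds toward the critical threshold $\hdim(X)=d/2$. The starting point is the standard reduction to \emph{pinned} distance sets: it suffices to exhibit a single $x\in X$ for which $\Delta_x(X):=\{|x-y|:y\in X\}$ has Hausdorff dimension $1$. By Frostman's lemma I would fix a probability measure $\mu$ on $X$ with $\mu(B(y,r))\lesssim r^s$ for every $s<\hdim(X)$, and study the pushforward distance measures $\Delta^x_{*}\mu$ for $\mu$-typical pins $x$.

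The key observation is that, locally, the pinned distance map $y\mapsto|x-y|$ agrees with a radial projection from $x$, so bounding $\hdim(\Delta^x_{*}\mu)$ from below reduces to controlling one-parameter families of orthogonal projections of $\mu$, restricted to spherical annuli around $x$, at every scale simultaneously. I would analyze $\mu$ through a multiscale/branching decomposition, recording at each scale $2^{-n}$ how its mass is distributed among dyadic cells and how that distribution interacts with the radial projections. The engine of the argument would be a multiscale projection theorem analogous to the one used in our proof of the slicing conjecture, which converts scale-by-scale $L^q$-smoothing estimates for the projections into a genuine dimension lower bound for $\Delta^x_{*}\mu$, after pigeonholing over $x\in\supp(\mu)$.

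The hard part will be the behaviour near the critical threshold $\hdim(X)=d/2$. Above this threshold, Mattila's $L^2$ estimate for spherical averages provides a nontrivial Fourier-analytic input that the multiscale machinery can amplify to a bound strictly better than $d-\hdim(X)$; at the threshold itself, the $L^2$ norms of the projected measures blow up and any entropy deficit at intermediate scales propagates to a strict loss in the final dimension of $\Delta_x(X)$. Overcoming this obstacle seems to require either a genuine $L^q$-improvement for radial projections exactly at dimension $d/2$, or a structural dichotomy (in the spirit of discretised projection and sum-product theorems) ruling out the extremal configurations of $\mu$ that saturate the $L^2$ bound. For this reason I would expect the realistic deliverable of the plan to be an estimate of the form $\hdim(\Delta(X))\ge 1-g(d/2-\hdim(X))$ with $g\to 0$ as $\hdim(X)\to d/2$, rather than a full resolution of Conjecture \ref{conj:Falconer}.
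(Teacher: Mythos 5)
This is a \emph{conjecture}, not a theorem: the paper does not prove it, and neither do you, so the question is really whether your research plan matches how the problem has actually been attacked. At the level of the opening moves it does: reduction to pinned distance sets, fixing a Frostman measure, viewing $y\mapsto|x-y|$ as a radial projection and controlling it scale-by-scale via a multiscale entropy formula with a branching/uniformization decomposition — this is precisely the framework of \S\ref{subsec:multiscale-entropy}--\S\ref{subsec:radial}, with Proposition \ref{prop:entropy-of-image-measure} playing the role of your ``multiscale projection theorem.''

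Where your plan diverges from (and falls short of) the actual state of the art is in the last paragraph. First, you lean on Mattila's $L^2$ spherical-average estimate as the Fourier-analytic input, but as the paper stresses, that machinery (the Mattila integral and restriction-type bounds) gives \emph{nothing} exactly at $\hdim(X)=d/2$; the entire point of the geometric/combinatorial route is to sidestep it. Second, your anticipated deliverable $\hdim(\Delta(X))\ge 1-g(d/2-\hdim(X))$ with $g\to 0$ at the threshold is more pessimistic than what is already known: Theorems \ref{thm:ShmerkinWang-general} and \ref{thm:ShmerkinWang-Ahlfors} give uniform explicit bounds \emph{at} $\hdim(E)=d/2$ (e.g.\ $(\sqrt{5}-1)/2$ in the plane), and the full conjecture is established when Hausdorff and packing dimensions coincide. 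Third, and most importantly, you leave out the mechanisms that make those results possible: (a) a priori \emph{radial projection} estimates (Orponen) that seed the argument for measures of dimension $\le 1$ giving no mass to lines; (b) a \emph{bootstrapping} loop in which Proposition \ref{prop:entropy-of-image-measure} is applied to the radial projections themselves (exploiting $\tfrac{d}{dx}\pi_y(x)=\pi_y(x)^\perp$); and (c) the improvement over Kaufman's exceptional-set theorem (Theorem \ref{thm:Kaufman-improvement}), which is exactly the ``structural dichotomy ruling out extremal configurations'' you gesture at, obtained via Bourgain's projection theorem plus a regular/non-regular dichotomy. Without naming these you have the scaffolding of the approach but not the load-bearing parts, so the plan as written would stall at the same $1/2$ threshold you are trying to surpass.
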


We discuss the history and motivation behind these conjectures in Sections \ref{sec:Furstenberg} and \ref{sec:Falconer}, respectively. Conjecture \ref{conj:Furstenberg} was resolved by the first author \cite{Shmerkin19} and, simultaneously, independently, and with a strikingly different proof, by M. Wu \cite{Wu19}. Many related problems remain open. Conjecture \ref{conj:Falconer} is open in all dimensions.

At first sight, Conjectures \ref{conj:Furstenberg} and \ref{conj:Falconer} appear to be rather different (other than both involving Hausdorff dimension). A key difference is that Furstenberg's conjecture deals with sets with a rigid arithmetic structure, while Falconer's conjecture involves arbitrary Borel sets. A more subtle but also crucial distinction is that Furstenberg's conjecture is \emph{linear} in nature (it concerns linear slices of $X\times Y$), while Falconer's conjecture deals with Euclidean distances and curvature plays a key r\^{o}le in all partial progress towards it.

Nevertheless, we will see that there are some similar ideas in our own approach to these two problems. We will recast both in terms of \emph{projections}. To handle these projections, we use in both cases a combinatorial approach to the study of fractals through their branching structure. Bourgain's celebrated discretized projection theorem \cite{Bourgain03, Bourgain10} (or its proof) makes an appearance in our work on both conjectures.

In Section \ref{sec:Bourgain}, we discuss a key uniformization lemma, and Bourgain's discretized sumset, sum-product, and projection theorems. In Section \ref{sec:Furstenberg}, we put Furstenberg's slicing conjecture into context and give an impressionistic account of our solution. In Section \ref{sec:Falconer}, we discuss Falconer's problem and some of our recent progress towards it (obtained partly in collaboration with T. Keleti and with H. Wang). Along the way, we will touch upon the closely related and vast field of \emph{projection theory} in geometric measure theory.

A word on notation. Given two positive quantities $A, B$, the notation $A\lesssim B$ means that $A\le C B$ for some constant $C>0$, while $A\lesssim_x B$ means that $A\le C(x) B$, where again $C(x)>0$. We write $A\gtrsim B$ for $B\lesssim A$ and $A\sim B$ for $A\lesssim B\lesssim A$, and likewise with sub-indices. We denote positive constants whose value is not too important by $c, C$ and as before indicate their dependencies with subindices.

\section{A glimpse of Bourgain's discretized geometry}
\label{sec:Bourgain}

\subsection{Uniform sets and uniformization}

Even though the statements of Conjectures \ref{conj:Furstenberg} and \ref{conj:Falconer} involve Hausdorff dimension, most approaches discretize the problem at a small scale $\delta$. Given a bounded set $X\subset\R^d$, let $|X|_{\delta}$ be the number of $\delta$-mesh cubes $\prod_{i=1}^d [k_i\delta,(k_i+1)\delta)$ intersecting $X$. If $X$ has the property that $|X|_{\delta}\le \delta^{-s}$ for all small $\delta$, then $\hdim(X)\le s$. If, on the other hand, $|X|_{\delta}\ge \delta^{-s}$ for all small $\delta$, it does not quite follow that $\hdim(X)\ge s$ - what is technically true is that the lower Minkowski dimension of $X$ is at least $s$. For the sake of simplicity we will ignore this distinction, and consider the growth rate of $|X|_{\delta}$ as a good proxy for the (Hausdorff) dimension of $X$. In this discussion, there is no loss of generality in restricting $\delta$ to dyadic numbers $2^{-m}$ or even $(2^T)$-adic numbers $2^{-T\ell}$ once the integer $T$ has been fixed.

Let $\mathcal{D}_{\delta}$ denote the family of $\delta$-mesh cubes in $\R^d$. If $X\subset\R^d$ is a union of cubes in $\mathcal{D}_{2^{-m}}$, we say that $X$ is a $2^{-m}$-set. %Given any set $X\subset\R^d$, the union $X^{(m)}$ of all the cubes in $\mathcal{D}_{2^{-m}}$ intersecting $X$ is a $2^{-m}$-set with $|X^{(m)}|_{2^{-m}}=|X|_{2^{-m}}$.
For $X\subset\R^d$, we denote the set of cubes in $\mathcal{D}_{\delta}$ intersecting $X$ by $\mathcal{D}_{\delta}(X)$.

Let $X\subset [0,1)^d$. Given a $T\in\N$ (which we consider fixed) and $\ell\ge 1$, we can view $(\mathcal{D}_{2^{-Tj}}(X))_{j=0}^{\ell-1}$ as a tree, with $[0,1)^d$ as the root and descendance given by inclusion. This tree provides a combinatorial description of $X$ at resolution $2^{-T\ell}$. In general, the tree may be very irregular, with different vertices having different numbers of offspring. In many situations, the set $X$ is easier to study if one knows that the tree is \emph{spherically symmetric},  meaning that the number of offspring is constant at each level of the tree (but can still change from level to level).
\begin{definition}
A set $X\subset [0,1)^d$ is $(T,(N_j)_{j=0}^{\ell-1})$-uniform if
\[
Q\in\mathcal{D}_{2^{-jT}}(X)\Longrightarrow |\mathcal{D}_{2^{-(j+1)T}}(X\cap Q)| = N_j, \quad j=0,1,\ldots,\ell-1.
\]
If $X$ is $(T,(N_j)_{j=0}^{\ell-1})$-uniform for some $(N_j)_{j=0}^{\ell-1}$, then we also say that $X$ is $(T;\ell)$-uniform.
\end{definition}

We emphasize that what is fixed at each scale is the \emph{number} of offspring; the particular set of $N_j$ sub-cubes is still allowed to depend on the parent cube of level $j$. The following \emph{uniformization lemma} says that by taking $T$ large, and at the price of replacing $X$ by a large subset, we may always assume that $X$ is $(T;\ell)$-uniform.
\begin{lemma} \label{lem:uniformization}
Fix $T,\ell\in\N$ and write $m=T\ell$. Let $X\subset [0,1)^d$ be a $2^{-m}$-set. Then $X$ contains a $(T;\ell)$-uniform subset $X'$ with
\[
|X'|\ge (2T)^{-\ell}|X| = 2^{(-\log(2T)/T) m}|X|.
\]
\end{lemma}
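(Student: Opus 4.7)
The plan is to construct $X'$ iteratively from the finest scale upward, at each scale $jT$ using a dyadic pigeonhole on offspring counts to force a common branching number $N_j$, while losing only a modest constant factor per scale.

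Concretely, I will build a decreasing chain $X = X^{(\ell)} \supset X^{(\ell-1)} \supset \cdots \supset X^{(0)} = X'$ such that each $X^{(j)}$ is uniform at levels $j, j+1, \ldots, \ell-1$. To pass from $X^{(j+1)}$ to $X^{(j)}$, I consider each parent $Q \in \mathcal{D}_{2^{-jT}}(X^{(j+1)})$ and its offspring count $n(Q) := |\mathcal{D}_{2^{-(j+1)T}}(X^{(j+1)} \cap Q)|$, an integer in $\{1,\ldots,2^{Td}\}$. A standard dyadic pigeonhole over the $O(Td)$ ranges $[2^k,2^{k+1})$ produces an exponent $k$ such that the parents $I_k := \{Q : n(Q) \in [2^k, 2^{k+1})\}$ carry, weighted by $n(Q)$, a fraction $\gtrsim 1/T$ of $|X^{(j+1)}|$. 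Restricting to $I_k$ and retaining exactly $N_j := 2^k$ offspring of each surviving parent loses at most another factor of $2$ (since $N_j \ge n(Q)/2$ on $I_k$). The resulting $X^{(j)}$ is uniform at level $j$ with branching number $N_j$, and $|X^{(j)}| \gtrsim |X^{(j+1)}|/(2T)$.

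The key structural point, and the reason for sweeping bottom-up rather than top-down, is that pruning at level $j$ only removes level-$(j+1)$ cubes together with their entire subtrees; it therefore does not change the offspring counts of any surviving cube at finer levels $j+1, \ldots, \ell-1$. Hence the uniformity already arranged at finer scales is inherited by $X^{(j)}$. By contrast, a top-down sweep would risk emptying out a coarser parent whose branching number has already been fixed, destroying the earlier uniformity and forcing an awkward re-equalization. The main obstacle is therefore purely bookkeeping: tracking the exact constants from the dyadic pigeonhole over $\{1,\ldots,2^{Td}\}$ so that the cumulative loss over the $\ell$ scales matches the stated $(2T)^{-\ell}$, with any dimension dependence absorbed into the constants (which is innocuous for the applications in the paper, where the sets of interest live in $[0,1)$).
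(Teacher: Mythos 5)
Your proposal is essentially identical to the paper's own proof: a bottom-up sweep from the finest scale, dyadic pigeonholing on offspring counts at each level, then trimming each surviving parent to exactly $2^k$ children, with the same key observation that pruning at level $j$ removes whole subtrees and hence preserves the uniformity already arranged at levels $j+1,\ldots,\ell-1$. Your remark about the $O(Td)$ pigeonhole classes for $d>1$ is a fair one — the paper's stated constant $(2T)^{-\ell}$ is really for $d=1$ (where the sets of interest in the paper live), and in general dimension one would get $(2Td)^{-\ell}$, which is still of the form $2^{-o_T(1)\,m}$ and serves the same purpose.
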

\begin{proof}
We begin from the bottom of the tree, setting $X^{(\ell)}:=X$. Once $X^{(j+1)}$ is constructed, we let
\[
X^{(j,k)} =\bigcup \left\{ X^{(j+1)}\cap Q :  |Q\cap X^{(j+1)}|_{2^{-(j+1)T}} \in [2^k+1,2^{k+1}] \right\},\quad k=0,\ldots,T-1.
\]
Since $k$ takes $T$ values, we can pick $k=k_j$ such that $|X^{(j,k)}|\ge |X^{(j+1)}|/T$. By removing at most half of the cubes in $\DD_{2^{-(j+1)T}}(X^{(j+1)})$ from each of the sets $Q\cap X^{(j+1)}$ making up $X^{(j,k)}$, we obtain a set $X^{(j)}\subset X^{(j+1)}$ such that $|X^{(j)}|\ge |X^{(j+1)}|/(2T)$ and $|Q\cap X^{(j)}|_{2^{-(j+1)T}}=2^k$ for all $Q\in \DD_{jT}(X^{(j)})$. We see inductively that $|Q\cap X^{(j)}|_{2^{-(j'+1) T}}$ is constant over all $Q\in\DD_{j' T}(X^{(j)})$, for all $j'=j,j+1,\ldots,\ell-1$. The lemma follows by taking $X'=X^{(0)}$.
\end{proof}
We make some remarks on this statement and its proof. Firstly, this is just the simplest example of a flexible and powerful multiscale pigeonholing argument. For example, instead of (or additionally to) uniformizing the branching numbers $N_j$, we can pigeonhole any property of $Q\cap X$, $Q\in\DD_{Tj}(X)$, that depends only on the behavior at scale $2^{-T(j+1)}$ and can be partitioned into a number $C_T$ of classes, with $\log(C_T)/T\to 0$ as $T\to\infty$. Secondly, these ideas can also be used to ``uniformize'' a measure $\mu$ - an additional first step in this case is to pigeonhole a ``$\mu$-large'' $2^{-T\ell}$-set $X$ such that the density of $\mu|_X$ is roughly constant; we can then invoke the argument for sets. Here ``$\mu$-large'' could simply mean that $\mu(X)$ is large, but sometimes it is convenient to look at other quantities like $\|\mu|_X\|_{L^q}$. Lastly, we can iterate such a uniformization lemma to decompose $X$ (or $\mu$) into a union of finitely many ``large'' uniform subsets $X_i$, plus a ``small'' remaining set $X_{\text{bad}}$; see e.g. \cite[Corollary 3.5]{KeletiShmerkin19}.

\subsection{Bourgain's sumset theorem}
\label{subsec:Bourgain-sumset}

Let $X\subset [0,1)$ be a $2^{-m}$-set for some large $m$. We are interested in understanding how the size of the arithmetic sum $X+X = \{ x+y: x,y\in X\}$ relates to the structure of $X$. If $X$ is an interval, then $|X+X|_\delta\sim|X|_\delta$. There are many ``fractal'' sets which satisfy $|X+X|_\delta \le 2^{\e m}|X|_\delta$ with $\e>0$ arbitrarily small: fix a large $T\in\N$, an even larger $\ell\gg T$, and $J\subset \{0,1,\ldots,\ell-1\}$. Let $X_J$ be the set of points in $[0,1)$ whose base $2^{T}$-expansion has a digit zero at position $j+1$ for $j\in J$, but is otherwise arbitrary. Then $X_J+X_J$ has the same structure, except that there could be carries; however, because $T$ is large, these carries will not substantially increase the size of $X_J+X_J$. More precisely,
\[
|X_J+X_J| \le 2^{\ell-|J|}|X_J| \le 2^{\e m}|X_J|, \quad\text{where }\e=1/T,
\]
where as usual we write $m=T\ell$. Note that even though $X_J$ may not look macroscopically like an interval, there is a sequence of scales at which it looks like a union of intervals of the same length, and the left endpoints of these intervals form an arithmetic progression.

The set $X_J$ is $(T;(N_j)_{j=0}^{\ell-1})$-uniform, with $N_j=1$ if $j\in J$, and $N_j=2^T$ otherwise. Bourgain's sumset theorem, which is implicit in \cite{Bourgain10}, and stated in this form in \cite[Corollary 3.10]{Shmerkin19}, asserts that having a small sumset forces this kind of branching structure:
\begin{theorem} \label{thm:Bourgain-sumset}
Given $\delta>0$ there are $\e>0$, $T\in\N$, such that the following holds for all sufficiently large $\ell\in \N$.

Let $m=\ell T$. Suppose $X$ is a $2^{-m}$-set with $|X+X|_{2^{-m}} \le 2^{\e m}|X|_{2^{-m}}$. Then $X$ contains a $(T,(N_j)_{j=0}^{\ell-1})$-uniform subset $X'$ such that:
 \begin{enumerate}[(i)]%[label={\upshape(\roman*).}]
 \itemsep0.3em
  \item $|X'|_{2^{-m}}\ge 2^{-\delta m} |X|_{2^{-m}}$.
  \item For each $j$, either $N_j=1$, or $N_j\ge 2^{(1-\delta)T}$.
  \end{enumerate}
\end{theorem}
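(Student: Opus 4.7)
The plan is to reduce to the uniform case via Lemma~\ref{lem:uniformization} and then to invoke Bourgain's discretized sum--product theorem from \cite{Bourgain10} as a black box: a $2^{-m}$-set $A\subset[0,1]$ satisfying a Frostman non-concentration condition at every dyadic scale with exponent bounded away from $0$ and $1$ has $|A+A|_{2^{-m}}\ge 2^{\eta m}|A|_{2^{-m}}$ for some $\eta>0$ depending only on the non-concentration parameters.

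Choose $T$ so large that $\log(2T)/T$ is much smaller than $\delta$, and apply Lemma~\ref{lem:uniformization} to extract a $(T,(N_j)_{j=0}^{\ell-1})$-uniform subset $X''\subset X$ with $|X''|_{2^{-m}}\ge 2^{-(\delta/10)m}|X|_{2^{-m}}$. Since $X''+X''\subset X+X$, the small-doubling hypothesis transfers to
\[
|X''+X''|_{2^{-m}}\le 2^{(\e+\delta/10)m}|X''|_{2^{-m}}.
\]
It thus suffices to show that, for $\e$ sufficiently small in terms of $\delta$, every $N_j$ must lie in $\{1\}\cup[2^{(1-\delta)T},2^T]$, for then we may take $X'=X''$. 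Assume otherwise: the \emph{intermediate} index set $J_0=\{j:1<N_j<2^{(1-\delta)T}\}$ has size at least $(\delta/2)\ell$. A dyadic pigeonhole over the $O(1/\delta)$ logarithmic bands of width $\delta T/10$ in $(0,1-\delta)$ extracts $J\subset J_0$ with $|J|\gtrsim \delta^2\ell$ and some $\alpha\in(\delta/10,1-\delta/2)$ such that $N_j\in[2^{\alpha T},2^{(\alpha+\delta/10)T}]$ for every $j\in J$.

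To apply Bourgain's theorem one must produce a genuine Frostman condition; this is engineered by a further reduction that isolates a sub-tree of $X''$ in which the trivial branching levels ($N_j=1$) are collapsed via rescaling so that the intermediate levels are spread out across the relevant range of effective scales. After this bookkeeping, spherical uniformity of the branching, combined with the band $[2^{\alpha T},2^{(\alpha+\delta/10)T}]$ on $J$ and the trivial-or-full dichotomy elsewhere, delivers a Frostman exponent bounded away from $0$ and $1$ in terms of $\delta$ only at every dyadic scale of the rescaled set. Bourgain's discretized sum--product theorem then yields sumset growth by a factor $\ge 2^{\eta m}$ for some $\eta=\eta(\delta)>0$ independent of $\e$, contradicting the displayed bound once $\e<\eta/2$. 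The chief obstacle is the appeal to Bourgain's theorem itself, whose proof is the multiscale core of \cite{Bourgain10} based on iterated Pl\"unnecke--Ruzsa inequalities and $L^2$-flattening; everything else is two pigeonholes, a rescaling to collapse trivial scales, and the routine verification that spherical uniformity of branching propagates to Frostman non-concentration.
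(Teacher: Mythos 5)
The paper does not prove this theorem: it is stated as ``implicit in \cite{Bourgain10}, and stated in this form in \cite[Corollary 3.10]{Shmerkin19}'', so there is no internal proof to compare against. The question is therefore whether your argument is sound on its own terms, and I don't think it is.

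The central gap is the black box. You attribute to Bourgain's discretized sum--product theorem (Theorem~\ref{thm:Bourgain-sum-product}) the claim that a set $A$ satisfying a Frostman non-concentration bound has $|A+A|_{2^{-m}}\ge 2^{\eta m}|A|_{2^{-m}}$. That theorem says something genuinely weaker and of a different nature: under a non-concentration hypothesis with \emph{some} $\beta>0$, it lower bounds $\max\{|A+A|_\delta,|A\cdot A|_\delta\}$, not $|A+A|_\delta$ alone. It cannot give a lower bound on the sumset by itself: an arithmetic progression of step $\approx 2^{-\alpha m}$ satisfies the non-concentration hypothesis \eqref{eq:non-concentration} for any $\beta\le\alpha$ (up to the allowed $\delta^{-\kappa}$ slack) while its doubling constant is $\approx 2$. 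What rescues Theorem~\ref{thm:Bourgain-sum-product} there is the product set, which is large. Your sharpened hypothesis---non-concentration exponent bounded away from both $0$ and $1$ at \emph{every} dyadic scale---does exclude progressions and atoms, and the claim you actually need (intermediate Frostman behaviour at a definite proportion of scales forces the sumset to grow by $2^{\eta m}$) is true; but it is not a consequence of Theorem~\ref{thm:Bourgain-sum-product}. It is, up to translation between Frostman conditions and branching numbers, precisely the content of Theorem~\ref{thm:Bourgain-sumset} itself. Indeed, the paper points out that the logical order in \cite{Bourgain10} is that the sumset structure theorem is an \emph{input} to the projection theorem, from which sum--product is then derived; your proposal runs that arrow backwards. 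So the step ``invoke Bourgain's sum--product theorem'' does not close the argument: either you are citing the wrong theorem, or you are citing (a reformulation of) the statement to be proved.

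Two smaller issues even if one grants the intended black box. First, ``assume otherwise'' followed immediately by $|J_0|\ge(\delta/2)\ell$ is not the negation of ``every $N_j\in\{1\}\cup[2^{(1-\delta)T},2^T]$''. You must separately handle $0<|J_0|<(\delta/2)\ell$ by pruning (choose one child at each $j\in J_0$), which is affordable---the loss is at most $2^{(1-\delta)T|J_0|}\le 2^{\delta m/2}$, within budget after also paying the uniformization factor---but needs to be said, and it should be done before, not instead of, the contradiction argument. Second, the ``rescaling to collapse trivial scales'' that is supposed to manufacture a scale-by-scale Frostman condition is doing much more work than ``routine verification'' suggests. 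After collapsing the $N_j=1$ levels, the intermediate band $N_j\in[2^{\alpha T},2^{(\alpha+\delta/10)T}]$ gives control only on $J$; on $\{0,\dots,\ell-1\}\setminus(J\cup\{j:N_j=1\})$ the branching can be full, and the resulting exponent is pushed back toward $1$ at those scales unless you carefully interleave. This bookkeeping---moving between branching sequences and genuine non-concentration at every scale in $[\delta,1]$---is exactly the multiscale argument one is trying to avoid by citing a black box.
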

In other words, up to passing to a large subset, $2^{-m}$-sets with sub-exponential doubling locally look, depending on the scale, like an interval or a point.  This is an example of an ``inverse theorem'' in (discretized) additive combinatorics, in which from a purely combinatorial fact (small doubling) one deduces strong structural information. We will encounter another (related) inverse theorem in \S\ref{subsec:inverse-thm}. We emphasize that Theorem \ref{thm:Bourgain-sumset} does \emph{not} characterize sets with small doubling - even if $X$ is uniform with either full or no branching at each scale, if the locations of the (single) offspring cubes at the scales wit no branching do not have any arithmetic structure, it may well happen that $|X+X|_{2^{-m}}$ is far larger than $|X|_{2^{-m}}$.

\subsection{Bourgain's discretized sum-product and projection theorems}

A heuristic principle of great reach asserts that if $X$ is a subset of some ring, then either the sumset $X+X$ or the product set $X\cdot X$ must be substantially larger than $X$, unless $X$ itself looks like a sub-ring. For example, it is a longstanding conjecture of Erd\H{o}s and Szemer\'{e}di that if $X\subset\Z$, then $\max\{|X+X|,|X\cdot X|\} \gtrsim_{\e} |X|^{2-\e}$ - in other words, either the sumset or the product set must be as large as possible. See \cite{RudnevStevens20} for the best bound at the time of writing, and further discussion.

When dealing with products, it is more convenient to work with subsets of $[1,2)$ rather than $[0,1)$. Again, if $X=[a,b)\subset [1,2)$, then both $|X+X|_\delta$ and $|X\cdot X|_\delta$ are comparable to $|X|_\delta$. Heuristically, one would expect that if $X\subset [1,2)$ does not look roughly like an interval at scales in $[\delta,1]$, then either $|X+X|_{\delta}$ or $|X\cdot X|_{\delta}$ is substantially larger than $|X|_\delta$. This is the content of Bourgain's discretized sum-product theorem, which confirmed a conjecture of Katz and Tao \cite{KatzTao01}:

\begin{theorem}[\cite{Bourgain03, BourgainGamburd08, Bourgain10}] \label{thm:Bourgain-sum-product}
Given $0<\alpha<1$ and $\beta>0$ there are $\kappa(\alpha,\beta)>0$, $\eta=\eta(\alpha,\beta)>0$ such that the following holds for $\delta\le\delta_0(\alpha,\beta)$. Let $X\subset [1,2]$ satisfy $|X|_{\delta}\ge\delta^{-\alpha}$ and
\begin{equation} \label{eq:non-concentration}
|X\cap [t,t+r]|_\delta \le \delta^{-\kappa} r^{\beta}|X|_\delta, \quad t\in [1,2], r\in [\delta,1].
\end{equation}
Then
\[
\max\{|X+X|_{\delta}, |X\cdot X|_{\delta} \} \ge \delta^{-\alpha-\eta}.
\]
\end{theorem}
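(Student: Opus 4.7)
I would argue by contradiction, using Theorem \ref{thm:Bourgain-sumset} as the main structural tool on both the additive and multiplicative sides of $X$. Suppose $\max\{|X+X|_\delta, |X \cdot X|_\delta\} \le \delta^{-\alpha-\eta}$ with $\eta > 0$ to be chosen small. By Lemma \ref{lem:uniformization}, I may assume at the outset that $X$ is $(T; \ell)$-uniform with $\delta = 2^{-T\ell}$, at the price of a sub-exponential loss absorbed into $\eta$.

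Theorem \ref{thm:Bourgain-sumset} applied to $X$ produces, after passing to a subset of almost full cardinality, a $(T;\ell)$-uniform $X'$ whose additive branching numbers $N_j$ each lie in $\{1\} \cup [2^{(1-\delta_0) T}, 2^T]$ for a tolerance $\delta_0 \ll 1$. Since $\log : [1,2] \to [0,\log 2]$ is bi-Lipschitz, the bound on $|X \cdot X|_\delta$ transfers to a sumset bound for $\log X$, so Theorem \ref{thm:Bourgain-sumset} applied to $\log X$ and re-uniformization yields a further large subset whose multiplicative branching numbers $M_j$ also belong to $\{1\} \cup [2^{(1-\delta_0) T}, 2^T]$. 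Writing $J_{\text{add}} = \{j : N_j \ge 2^{(1-\delta_0) T}\}$ and $J_{\text{mult}} = \{j : M_j \ge 2^{(1-\delta_0) T}\}$, the non-concentration hypothesis \eqref{eq:non-concentration} combined with $|X|_\delta \ge \delta^{-\alpha}$ forces both $|J_{\text{add}}|$ and $|J_{\text{mult}}|$ to occupy a definite positive fraction of the $\ell$ scales: too few ``full'' scales on either side would collapse $X$ into too few small intervals, contradicting either the mass lower bound or non-concentration.

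The heart of the proof, and the main obstacle, is to extract a contradiction from the interplay of these two branching profiles. The key input is the non-linearity of multiplication on $[1,2]$: for fixed $t \in X$ the map $x \mapsto t x$ rescales additively by $t$ but acts multiplicatively by a \emph{translation} by $\log t$. Consequently the additive branching profile of $t X$ is a $\log t$-shifted copy of that of $X$. Choosing $t_1, \ldots, t_k \in X$ spread across $[1,2]$ (which non-concentration guarantees) one transports $J_{\text{add}}$ to a family of shifted copies $J_{\text{add}} + \log t_i$. If these shifted copies align with $J_{\text{mult}}$ at essentially every scale then $X$ behaves locally like a genuine interval under both operations, which in combination with $\alpha < 1$ and \eqref{eq:non-concentration} is incompatible; if they disagree at a positive-density set of scales then each mismatch supplies genuinely new cubes in $X+X$ or $X\cdot X$ via a localized Plünnecke-Ruzsa / incidence argument, and summing the gains across scales forces $\max\{|X+X|_\delta, |X\cdot X|_\delta\} \ge \delta^{-\alpha - \eta}$, a contradiction.

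The technical core, and where I expect the greatest difficulty, is making this multi-scale alignment-or-expansion dichotomy quantitative: each scale contributes only a small factor of $2^{c T}$ beyond $|X|_\delta$, and one has to control the cascade of $\delta_0$-losses across all $\ell$ scales. This is the content of Bourgain's discretized incidence-geometric machinery in \cite{Bourgain03, Bourgain10}, and it is what dictates the hierarchy $\eta \ll \kappa \ll \beta$ and the need for $T$ and $1/\delta_0$ to be chosen sufficiently large before $\delta$ is taken small.
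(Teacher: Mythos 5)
The paper does not prove Theorem~\ref{thm:Bourgain-sum-product}; it is cited to \cite{Bourgain03, BourgainGamburd08, Bourgain10}, and the only guidance the survey gives about its proof is that it can be recovered from the projection theorem (Theorem~\ref{thm:Bourgain-projection}), whose proof in turn runs through sets of the form $Y + xY$ for $x \in X$ and uses the sumset theorem's structural output \emph{together with} arithmetic information about the locations of the offspring cubes. So I am evaluating your proposal on its own merits, and there are two concrete problems.

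First, the mechanism you place at the ``heart of the proof'' is not correct. You claim that for $t \in X \subset [1,2]$, multiplication by $t$ acts on the multiplicative side as a translation by $\log t$, and therefore ``the additive branching profile of $tX$ is a $\log t$-shifted copy of that of $X$.'' But $t \in [1,2]$ means $x \mapsto tx$ is bi-Lipschitz with constants in $[1,2]$, so it changes lengths by less than one dyadic generation: the branching profile of $tX$ is the same as that of $X$ up to a bounded boundary error, not a shifted copy. The ``translation by $\log t$'' lives in the interval $[0,\log 2]$, not in the scale index, and translations in either coordinate preserve branching numbers anyway. Relatedly, since $\log$ is bi-Lipschitz on $[1,2]$, the branching profile of $\log X$ coincides with that of $X$ up to constants; applying Theorem~\ref{thm:Bourgain-sumset} to $X$ and to $\log X$ therefore does not produce two independent profiles $J_{\text{add}}$ and $J_{\text{mult}}$ that could be ``misaligned.'' There is nothing to compare: the profiles agree essentially automatically, with no use of the sum-product hypothesis.

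Second, even setting that aside, the proposal extracts no information beyond branching \emph{numbers}, but as the paper itself warns (in the discussion after Theorem~\ref{thm:Bourgain-sumset}), branching numbers do not characterize small doubling: a set can have the same interval-or-point profile with wildly different sumset sizes depending on the arithmetic \emph{locations} of the single offspring at the no-branching scales. Bourgain's argument must, and does, use this positional information; your dichotomy between ``alignment'' and ``expansion'' is stated purely in terms of which scales are full versus trivial, so it cannot see the arithmetic structure that the theorem is actually about. The ``alignment implies contradiction with $\alpha<1$ and non-concentration'' branch also has no visible argument: the sets $X_J$ from \S\ref{subsec:Bourgain-sumset} have exactly this interval-or-point structure, satisfy a non-concentration condition for suitable $J$, have $\alpha<1$, and have small additive and multiplicative doubling (after taking $\log$), without contradiction. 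In short, the proposal identifies the right a priori tool (the sumset inverse theorem) but the proposed way of combining the additive and multiplicative information does not close, and the self-acknowledged ``technical core'' is precisely where the argument fails.
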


Hypothesis \eqref{eq:non-concentration} is known as a \emph{non-concentration} assumption, and it quantifies the fact that $X$ ``does not look like an interval''. Note that because of the factor $\delta^{-\kappa}$, it is vacuous at scales close to $1$ or $\delta$. Bourgain \cite{Bourgain03} first proved this theorem under the stronger assumption that \eqref{eq:non-concentration} holds with $\alpha$ in place of $\beta$ (so that the non-concentration exponent matches the size of the set). Bourgain and Gamburd \cite{BourgainGamburd08} then proved it as stated, and used it to establish a spectral gap for subgroups of $\text{SU}(2)$ satisfying a diophantine condition. Under the assumption $\beta=\alpha$, Guth, Katz and Zahl \cite{GKZ21} recently found a simpler proof with an explicit value: any $\eta<\tfrac{\alpha(1-\alpha)}{4(7+3\alpha)}$ works (with $\kappa$ depending also on $\eta$).

In \cite{Bourgain10}, Bourgain proved a discretized projection theorem that can be seen as a far more flexible form of Theorem \ref{thm:Bourgain-sum-product}. Let $\Pi_x(a,b)=a+bx$.
\begin{theorem}[{\cite[Theorem 2]{Bourgain10}}] \label{thm:Bourgain-projection}
Given $0<\alpha<2$ and $\beta>0$ there are $\kappa(\alpha,\beta)>0$, $\eta=\eta(\alpha,\beta)>0$, such that the following holds for $\delta\le\delta_0(\alpha,\beta)$. Let $E\subset [0,1]^2$ satisfy $|E|_\delta \ge \delta^{-\alpha}$ and
\[
|E\cap B(x,r)|_\delta \le \delta^{-\kappa} r^{\beta}|E|_\delta, \quad x\in [0,1]^2, r\in [\delta,1].
\]
Let $X\subset [1,2]$ be a set satisfying \eqref{eq:non-concentration}.

Then there is a set $X_0\subset X$ with $|X\setminus X_0|_\delta \le \delta^\kappa|X|_\delta$, such that if $x\in X_0$ then
\[
|\Pi_x(E')|_\delta \ge \delta^{-\alpha/2-\eta} \quad\text{for all } E'\subset E,\, |E'|_\delta \ge \delta^\kappa |E|_\delta.
\]
\end{theorem}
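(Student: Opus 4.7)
My plan is to argue by contradiction, using the discretized sum-product theorem (Theorem \ref{thm:Bourgain-sum-product}) as the main engine. Suppose the conclusion fails: there is a set $X_1 \subset X$ of relative density at least $\delta^\kappa$ such that each $x \in X_1$ admits $E_x \subset E$ with $|E_x|_\delta \ge \delta^\kappa |E|_\delta$ but $|\Pi_x(E_x)|_\delta < \delta^{-\alpha/2-\eta}$. My first step is to apply Lemma \ref{lem:uniformization} at a scale $T = T(\alpha,\beta,\eta)$ to $E$ and $X$, and additionally pigeonhole the branching of $E$ into horizontal and vertical components at each scale, producing well-defined ``axial sizes'' $|\pi_1(E)|_\delta \approx \delta^{-\alpha_1}$ and typical fiber $|B_{a_0}|_\delta \approx \delta^{-\alpha_2}$ with $\alpha_1 + \alpha_2 \approx \alpha$. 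The non-concentration of $E$ propagates, up to constants, to non-concentration with exponent $\beta' = \beta'(\beta)$ for both $\pi_1(E)$ and typical fibers.

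For each bad $x$, the set $E_x$ is contained in a union of $\le \delta^{-\alpha/2-\eta}$ lines of slope $-1/x$, so projecting a typical horizontal fiber $B_{a_0}$ under $\Pi_x$ gives $a_0 + xB_{a_0}$ of size $\ge |B_{a_0}|_\delta$; combined with the compression bound this forces $\alpha_2 \le \alpha/2 + \eta$, and symmetrically $\alpha_1 \le \alpha/2 + \eta$, so both axial exponents are pinned near $\alpha/2$. Now pair two distinct bad directions $x_1, x_2 \in X_1$: comparing two families of slope-$-1/x_i$ lines covering $E_{x_i}$ yields many approximate identities $a - a' \approx x(b' - b)$ for $x \in \{x_1, x_2\}$ and $(a,b), (a',b') \in E$, i.e.\ $a - a'$ lies in $x \cdot (B - B)$ for each such $x$. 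Averaging over generic pairs $(x_1,x_2) \in X_1 \times X_1$ and using that $X_1$ inherits \eqref{eq:non-concentration} from $X$ produces a one-dimensional configuration: a set $B^\ast \subset B - B$ of size $\approx \delta^{-\alpha/2}$ has small additive doubling (coming from the compression of $A + xB$) together with small multiplicative interaction $|X_1 \cdot B^\ast|_\delta$ (coming from the sumset constraint $X_1 \cdot B^\ast \subset A - A$). This configuration contradicts Theorem \ref{thm:Bourgain-sum-product} applied to $B^\ast$ at exponents $(\alpha/2, \beta')$, provided $\eta$ is chosen smaller than the sum-product gain $\eta(\alpha/2, \beta')$ given there.

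The main obstacle is the axial decomposition in the first step: a 2D set satisfying only the weak non-concentration \eqref{eq:non-concentration} need not split into horizontal and vertical components, and its fibers need not inherit non-concentration. The spherically symmetric tree from Lemma \ref{lem:uniformization}, together with further pigeonholing of the branching type of each $2^{-jT}$-cube (e.g.\ whether branching is ``horizontal-dominant'', ``vertical-dominant'', or ``mixed''), is what enables this decomposition. Tracking how $\kappa$ and the 1D non-concentration exponent $\beta'$ degrade across the multiscale pigeonholing is the technical heart of the argument, and it forces $\kappa, \eta$ to be much smaller than their counterparts in Theorem \ref{thm:Bourgain-sum-product}. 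The case $\alpha > 1$ introduces additional subtleties since axial dimensions can exceed $1/2$, and Pl\"unnecke--Ruzsa-style manipulations in the reduction to sum-product become tighter.
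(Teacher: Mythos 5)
The paper does not prove this theorem; it is cited directly from Bourgain's paper \cite{Bourgain10}, and the only proof sketch the survey offers is the remark that the proof ``relies on Theorem \ref{thm:Bourgain-sumset}'' (the structural/inverse sumset theorem), with the key intermediate step being that $|Y+xY|_\delta$ is large for some $x\in X$ whenever $Y$ is non-concentrated. The survey also notes that the implication goes the \emph{other} way: one can recover the sum-product theorem (Theorem \ref{thm:Bourgain-sum-product}) \emph{from} the projection theorem. Your proposal inverts this logical order, trying to derive the projection theorem as a corollary of the sum-product theorem. This is not the paper's (Bourgain's) route, and I don't believe it can be made to work; the projection theorem is the strictly more general statement.

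Beyond the reversed direction, there are concrete gaps. First, the claim that ``the non-concentration of $E$ propagates, up to constants, to non-concentration with exponent $\beta'$ for both $\pi_1(E)$ and typical fibers'' is false. Take $E=[0,\delta^{1/2}]\times[0,1]$: then $|E|_\delta\sim\delta^{-3/2}$ and $E$ satisfies \eqref{eq:non-concentration}-type bounds with $\beta=1$, yet $\pi_1(E)$ is a single interval of length $\delta^{1/2}$ and every vertical fiber is a full interval, so neither has any non-concentration at all. (The projection theorem still holds for this $E$, but your ``both axial exponents are pinned near $\alpha/2$'' step is wrong for it: here $\alpha_1\approx 1/2$ and $\alpha_2\approx 1$.) Second, even where your argument produces ``small $|B^\ast+B^\ast|$'' and ``small $|X_1\cdot B^\ast|$'', Theorem \ref{thm:Bourgain-sum-product} as stated concerns $|X+X|$ and $|X\cdot X|$ for a \emph{single} set $X$; passing from a mixed-product bound $|X_1\cdot B^\ast|$ to a same-set bound $|B^\ast\cdot B^\ast|$ is exactly the kind of Pl\"unnecke--Ruzsa manipulation that requires the structural sumset theorem you are trying to avoid, and is where Bourgain's actual argument does its heavy lifting. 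Finally, the clause ``for all $E'\subset E$ with $|E'|_\delta\ge\delta^\kappa|E|_\delta$'' is a uniform, Furstenberg-type statement over an exponentially large family of subsets; your contradiction hypothesis only produces one bad $E_x$ per $x$, and nothing in the subsequent averaging explains how a contradiction for those particular $E_x$ rules out every other subset, which is the real content of this formulation.
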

This is not quite the form the theorem was stated in \cite{Bourgain10} but is formally equivalent; see W. He's article \cite{He20} for this formulation and an extension of Theorem \ref{thm:Bourgain-projection} to projections from $\R^d\to\R^k$. Taking $E=X\times X$ with $|X|_\delta=\delta^{-\gamma}$, we obtain in particular $|X+X\cdot X|_\delta \gtrsim \delta^{-\gamma-\eta}$, which is close to Theorem \ref{thm:Bourgain-sum-product}. One can in fact recover  Theorem \ref{thm:Bourgain-sum-product} from Theorem \ref{thm:Bourgain-projection}, see \cite[Proof of Theorem 1]{Bourgain10}.

The proof of Theorem \ref{thm:Bourgain-projection} relies on Theorem \ref{thm:Bourgain-sumset}. An intermediate step in the proof is showing that if $Y\subset [1,2)$ satisfies the non-concentration assumption \eqref{eq:non-concentration}, then $|Y+x Y|_\delta$ is large for some $x\in X$. If this does not hold, then it is easy to see that $|Y+Y|_\delta$ is also small.  The structural information on $Y$ provided by Theorem \ref{thm:Bourgain-sumset} can then be used (very nontrivially!) to show  that in fact $|Y+ x Y|_\delta$ must be large for some $x\in X$.

Theorem \ref{thm:Bourgain-projection} has striking applications, for example to equidistribution of linear random walks in the torus \cite{BFLM11} and bounds for the dimensions of Kakeya sets in $\R^3$ \cite{KatzZahl19}. We discuss a nonlinear version of the theorem and applications to the Falconer distance set problem in \S\ref{subsec:nonlinear-Bourgain}. For later reference, we conclude this discussion with a Hausdorff dimension version of Theorem \ref{thm:Bourgain-projection}. We note however that it is the discretized version that gets used in the applications.
\begin{theorem}[{\cite[Theorem 4]{Bourgain10}}] \label{thm:Bourgain-projection-dim}
Given $0<\alpha<2$ and $\beta>0$ there is $\eta=\eta(\alpha,\beta)>0$ such that for any Borel set $E\subset \R^2$ with $\hdim(E)\ge\alpha$,
\[
\hdim\{ x\in\R: \hdim(\Pi_x E) < \tfrac{\alpha}{2}+\eta\} \le \beta.
\]
\end{theorem}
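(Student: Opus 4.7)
The plan is to deduce Theorem~\ref{thm:Bourgain-projection-dim} from its discretized counterpart, Theorem~\ref{thm:Bourgain-projection}, via a Frostman reduction combined with a multiscale pigeonhole. Argue by contradiction: suppose the exceptional set
\[
B = \{x \in \R : \hdim(\Pi_x E) < \tfrac{\alpha}{2} + \eta\}
\]
has $\hdim(B) > \beta$. Covering $\R$ by countably many unit intervals and restricting $E$ to a compact subset of full Hausdorff dimension, we may assume $E \subset [0,1]^2$ and $B \subset [1,2]$. Pick $\alpha'$ slightly less than $\alpha$ and $\beta'$ with $\beta < \beta' < \hdim(B)$, and apply Theorem~\ref{thm:Bourgain-projection} with parameters $(\alpha', \beta')$ to obtain constants $\kappa > 0$ and $\eta' > 0$. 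Define the $\eta$ in the statement so that $\alpha - \alpha' + 2\eta < 2\eta'$, which forces $\alpha/2 + \eta < \alpha'/2 + \eta'$ with a definite margin. By Frostman's lemma, fix compactly supported probability measures $\mu$ on $E$ and $\nu$ on $B$ with $\mu(B(z,r)) \lesssim r^{\alpha'}$ and $\nu(B(t,r)) \lesssim r^{\beta'}$.

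For each small dyadic scale $\delta$, pigeonhole from the supports of $\mu$ and $\nu$ cells of roughly constant mass to produce $\delta$-sets $E_\delta \subset [0,1]^2$ and $X_\delta \subset [1,2]$. The Frostman bounds give $|E_\delta|_\delta \gtrsim \delta^{-\alpha' + o(1)}$, $|X_\delta|_\delta \gtrsim \delta^{-\beta' + o(1)}$, and immediately yield the non-concentration hypothesis \eqref{eq:non-concentration} with comfortable margin. Theorem~\ref{thm:Bourgain-projection} then gives a subset $X_\delta^0 \subset X_\delta$ with $\nu(X_\delta^0) \ge 1 - O(\delta^\kappa)$ such that for every $x \in X_\delta^0$ and every $E' \subset E_\delta$ with $|E'|_\delta \ge \delta^\kappa |E_\delta|_\delta$,
\[
|\Pi_x E'|_\delta \ge \delta^{-\alpha'/2 - \eta'}.
\]

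The main obstacle, and the technical heart of the argument, is converting this Minkowski-type lower bound into a contradiction with the Hausdorff-dimension hypothesis on $\Pi_x E$. The subtlety is real: a set of Hausdorff dimension $<s$ can have $|\cdot|_\delta \sim \delta^{-1}$ at any chosen scale, because an $s$-efficient Hausdorff cover is free to mix many scales. The plan is to exploit that the lower bound above holds uniformly over \emph{every} large subset of $E_\delta$. For each $x \in B$, pick a dyadic cover of $\Pi_x E$ of total $s$-cost $<\e$, where $s = \alpha/2 + \eta$. Partitioning the cover by dyadic scale and integrating against $\nu$, a pigeonhole in the spirit of Lemma~\ref{lem:uniformization} produces a single scale $\delta$ and a positive-$\nu$-measure subset $B^{*} \subset B$ on which the scale-$\ge\delta$ portion of the cover already captures $\Pi_x \mu$-mass at least $\tfrac{1}{2}$, so that its preimage gives a subset $E'_x \subset E_\delta$ with $\mu(E'_x) \ge \tfrac{1}{2}$ (hence $|E'_x|_\delta \ge \delta^\kappa |E_\delta|_\delta$) and $|\Pi_x E'_x|_\delta \le \e\,\delta^{-s - o(1)}$. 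Since $\nu(B^{*} \cap X_\delta^{0}) > 0$ for $\delta$ small enough, the resulting inequality $\delta^{-\alpha'/2 - \eta'} \le |\Pi_x E'_x|_\delta \le \e\,\delta^{-s - o(1)}$ yields a contradiction once $\e$ is small and $\delta$ is then taken small enough relative to $\e$, completing the proof.
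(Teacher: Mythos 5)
The paper does not prove Theorem~\ref{thm:Bourgain-projection-dim} at all; it is cited directly from Bourgain \cite{Bourgain10} and only stated here ``for later reference.'' So there is no in-paper argument to compare against, and your plan of deriving the Hausdorff-dimension version from the discretized Theorem~\ref{thm:Bourgain-projection} via Frostman measures and a multiscale pigeonhole is the natural and standard route. The overall architecture is sound, but the step you yourself flag as the ``technical heart'' contains a genuine error.

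You claim that if the scale-$\ge\delta$ portion of a cover $\{I_i\}$ with $\sum_i|I_i|^s<\e$ already captures $\Pi_x\mu$-mass $\ge 1/2$, then $|\Pi_x E'_x|_\delta\le \e\,\delta^{-s-o(1)}$. This is false when $s<1$, which is exactly the relevant regime here (for $\alpha<2$ one has $s=\alpha/2+\eta<1$). From $|I_i|\ge\delta$ and $|I_i|\le 1$ one only gets $\sum_i|I_i|\le\sum_i|I_i|^s<\e$, hence $\bigl|\bigcup_{|I_i|\ge\delta}I_i\bigr|_\delta\lesssim\e\,\delta^{-1}$, which is far larger than $\e\,\delta^{-s}$. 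A single interval of length $\e^{1/s}$ already makes the ratio $\e^{1/s}\delta^{-1}/(\e\,\delta^{-s})=\e^{1/s-1}\delta^{s-1}$ blow up as $\delta\to0$, so the inequality you write cannot be salvaged as stated and the contradiction with the lower bound $\delta^{-\alpha'/2-\eta'}$ does not follow. The standard repair is to pigeonhole a \emph{single} dyadic scale class $\{I_i:|I_i|\sim 2^{-k}\}$ carrying $\Pi_x\mu$-mass $\gtrsim k^{-2}$; that class has at most $\lesssim\e\,2^{ks}$ members, which gives the desired $\delta$-covering bound at $\delta=2^{-k}$, while the weaker mass bound $\gtrsim(\log(1/\delta))^{-2}$ still dominates $\delta^\kappa$, so $|E'_x|_\delta\ge\delta^\kappa|E_\delta|_\delta$ survives. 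Relatedly, you do not actually need to manufacture a \emph{common} scale for a positive-$\nu$-measure set $B^*$: after a Borel--Cantelli reduction (turning the cardinality bound $|X\setminus X_\delta^0|_\delta\le\delta^\kappa|X|_\delta$ into a $\nu$-measure bound by a pigeonhole to nearly constant-mass cells), the contradiction can be derived pointwise for a $\nu$-typical $x$, sidestepping the pigeonhole over $x$ and scale simultaneously. Finally, the parameter choice ``pick $\alpha'$ slightly less than $\alpha$, then $\eta'$, then $\eta$'' has a mild circularity that should be spelled out (one must fix $\alpha'$ before $\eta'=\eta'(\alpha',\beta')$ is known, so $2\eta'>\alpha-\alpha'$ is not automatic), though this is a bookkeeping issue rather than a conceptual one.
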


\section{Furstenberg's slicing problem}
\label{sec:Furstenberg}

\subsection{Furstenberg's principle and rigidity result}

Recall that $a,b\in \N$ are called \emph{multiplicatively dependent} (denoted $a\sim b$) if $\log a/\log b\in\mathbb{Q}$ or, equivalently, $a$ and $b$ are powers of a common integer. Otherwise, we say that $a$ and $b$ are \emph{multiplicatively independent}, and denote it by $a\nsim b$. If $a\sim b$, say $a=m^{a'}$, $b=m^{b'}$, then there is a straightforward relationship between the expansion of a real number $x$ to bases $a$ and $b$: they are both essentially the expansion to base $m$, looking at it in blocks of $a'$ and $b'$ digits at a time. In the 1960s, H. Furstenberg proposed a series of conjectures which, in different ways, aim to capture the heuristic principle that, on the other hand, \emph{expansions in multiplicatively independent bases have no common structure}.

Recall that if  $a\in\N_{\ge 2}$, we let $T_a:[0,1)\to [0,1)$, $x\mapsto a x\bmod 1$ denote multiplication by $a$ on the circle. A set $X\subset [0,1)$ is $T_a$-invariant if $T_a X\subset X$. Since the map $T_a$ shifts the $a$-ary expansion of a real number, a proper, closed, infinite $T_a$-invariant subset of $[0,1)$ can be thought of as being structured to base $a$. (The full circle $[0,1)$ and finite rational orbits $\{ j/m\}_{j=1}^{m-1}$ are trivially invariant under all $T_a$.) In 1967, Furstenberg \cite{Furstenberg67} proved that no proper infinite closed subset of the circle can be invariant under $T_a$ and $T_b$ if $a\nsim b$. This was the first concrete verification of the above heuristic principle, and gave birth to the vast and ongoing area of \emph{rigidity} in ergodic theory. Furstenberg's $\times 2$, $\times 3$ problem asks whether the natural analog of this result also holds for measures, and is one of the most fundamental open questions in ergodic theory and beyond. He also proposed a number of other conjectures involving $T_a$-invariant sets, that we discuss next.

\subsection{Furstenberg's sumset, slice and orbit conjectures}

In this section $a,b\ge 2$ are multiplicatively independent, and $X,Y\subset [0,1)$ are closed and invariant under $T_a, T_b$. According to Furstenberg's principle, such sets $X, Y$ should have no common structure. Furstenberg's rigidity result established a rough form of this: $X$ and $Y$ cannot be identical, unless trivial. Furstenberg conjectured that $X$ and $Y$ should be not just distinct but ``geometrically independent'', obeying dimensional relationships analogous to those of linear planes in general position. Since these are fractal sets ($T_a$-invariance can be seen as a kind of self-similarity, and it is well known that $\hdim(X)<1$ unless $X=[0,1)$), it is natural to use Hausdorff dimension.

Furstenberg's \emph{sumset conjecture} (which originated in the 1960s but was never stated in print) asserts that
\[
\hdim(X+Y) = \min(\hdim(X)+\hdim(Y),1),
\]
while Furstenberg's \emph{slice} or \emph{intersection conjecture}, stated as Conjecture 1 in \cite{Furstenberg70}, states that
\[
\hdim(X\cap Y) \le \max(\hdim(X)+\hdim(Y)-1,0).
\]
As pointed out in \cite{Furstenberg70}, this latter conjecture easily implies that $X\neq Y$ (unless trivial), recovering the rigidity result. While stopping short of proving the conjecture, Furstenberg in \cite{Furstenberg70} introduced some ideas that are at the heart of modern progress in the area, including what  are now known as \emph{CP-chains}, a class of  Markov chains where the transitions consist in ``zooming in'' dyadically towards typical points for the measures (see  \cite{Furstenberg08} for an elegant formulation of the theory). Using CP-chains, he showed that if $\hdim(X\cap Y)>\gamma$, then for almost all reals $u$ there is a line $\ell_u$ with slope $u$ such that $\hdim((X\times Y)\cap\ell_u)>\gamma$; moreover, there is an ergodic dynamical system on (measures supported on) linear fibers of $X\times Y$ of dimension $>\gamma$.

After partial progress in \cite{PeresShmerkin09}, the sumset conjecture was fully resolved by M. Hochman and the author in \cite{HochmanShmerkin12}, using CP-chains as a key tool. In this work we also introduced the method of \emph{local entropy averages} to bound from below the entropy and dimension of projected images; we will come back to this in \S\ref{subsec:multiscale-entropy}. A simple, purely combinatorial proof was recently obtained by D. Glasscock, J. Moreira and F. Richter \cite{GMR21}.

The slice conjecture was resolved around $10$ years later, independently by the author \cite{Shmerkin19} and my M. Wu \cite{Wu19}. Wu's proof is also based on CP-chains and the ideas from \cite{Furstenberg70}, but introduces a key new ergodic-theoretic insight. A simple conceptual proof, also based on the CP-chains from \cite{Furstenberg70}, was recently obtained by T. Austin \cite{Austin20}. By adapting Wu's method, H. Yu \cite{Yu21} gave a more elementary and quantitative proof in the case $\hdim(X)+\hdim(Y)<1$.  Our proof follows a different approach, based on additive combinatorics and multifractal analysis - we will describe some of the main ideas in the rest of this section. All the proofs yield also Conjecture \ref{conj:Furstenberg}, which was also implicitly stated in \cite{Furstenberg70}. They all also imply the sumset conjecture. Applications of the slice conjecture to number-theoretic problems involving integers with restricted digit expansions were given in \cite{BurrellYu21, GMR21b}.

A further conjecture of Furstenberg \cite[Conjecture 2]{Furstenberg70}, in the authors' view among the hardest and most beautiful in mathematics, asserts that for every irrational $x\in [0,1)$, if $\mathcal{O}_{m,x}=\overline{\{ T_m^n x\}_{n\in\N}}$, is the closure of the orbit of $x$ under $T_m$, then
\[
\hdim(\mathcal{O}_{a,x})+\hdim(\mathcal{O}_{b,x}) \ge 1.
\]
This fits into the theme of lack of common structure for expansions to bases $a,b$: it says that such expansions of an irrational number cannot simultaneously have ``low complexity'', as measured by the dimension of the orbit closure. In particular, if the orbit closure under $T_a$ has ``minimal complexity'' (dimension $0$), then the $T_b$-orbit must be dense, meaning that every possible $b$-ary block appears in the base $b$ expansion of $x$. This conjecture is wide open; even proving that either $\hdim(\mathcal{O}_{a,x})$ or $\hdim(\mathcal{O}_{b,x})$ has positive dimension  seems to require completely new ideas. However, it is a formal consequence of the slicing conjecture that the set of $x$ for which the orbit conjecture fails has Hausdorff dimension zero. Unfortunately, this says nothing about points $x$ for which $\hdim(\mathcal{O}_{a,x})=0$, since all such points form a zero dimensional set. Recently, B. Adamczewski and C. Faverjon \cite{AdamczewskiFaverjon20} showed that an irrational number cannot be automatic in bases $a$ and $b$; being automatic is a computational notion of ``simplicity'', and so this can be seen as a first verification that an irrational number cannot be ``too simple'' in two multiplicatively independent bases.

\subsection{$L^q$ dimensions, self-similarity, and the dimension of slices}
\label{subsec:Lq-selfsim}

Let $\mathcal{P}(X)$ denote the family of Borel probability measures on a metric space $X$. Given $\mu\in\mathcal{P}(\R^d)$, the $L^q$ dimensions $\{D_\mu(q)\}_{q>1}$ are a family of indices measuring the degree of singularity of $\mu$ through its $q$-moments:
\[
D_\mu(q) = D(\mu,q) = \liminf_{\delta\to 0} \frac{\log\sum_{Q\in\DD_{\delta}} \mu(Q)^q}{(q-1)\log\delta}.
\]
(It is also possible to define $D_\mu(q)$ for $q<1$, but we do not need this here.) The normalizing factor $1/(q-1)$ ensures that $D_\mu(q)\in [0,d]$. If $\mu$ has an $L^q$ density, then $D_\mu(q)=1$ but $D_\mu(q)<1$ is possible even for other absolutely continuous measures. For any fixed $\mu$, the function $D_\mu$ is non-increasing, so it makes sense to define
\[
D_\mu(\infty) = D(\mu,\infty)= \lim_{q\to\infty} D_\mu(q).
\]
It is not hard to show that $D_\mu(\infty)$ is the supremum of the $s$ such that $\mu(B(x,r)) \le C r^s$ for some constant $C=C(\mu,s)$ and all closed balls $B(x,r)$. Such $s$ are also called \emph{Frostman exponents} of $\mu$. The function $\tau_\mu(q)=(q-1)D_\mu(q)$ is known as the \emph{$L^q$-spectrum} of $\mu$.  It is always concave. In particular, both $\tau_\mu$ and $D_\mu$ are differentiable outside of a countable set of $q$.  See \cite[Section 3]{LauNgai99} for proofs of these facts and further background on the $L^q$ spectrum and dimension.

We are interested in upper bounds for the dimension of slices. The next very simple but key lemma relates this problem to lower bounds on $D_\mu(\infty)$ for suitable measures $\mu$. Given a map $\pi:X\to Y$ and $\mu\in\PP(X)$, we denote the push-forward measure by $\pi\mu=\mu\circ\pi^{-1}$.
\begin{lemma} \label{lem:Frostman-exp-to-small-fiber}
Suppose $\pi:\R^d\to \R$ is a Lipschitz map. Let $\mu\in\PP([0,1]^d)$ be such that $\mu(B(x,r)) \ge c r^\alpha$ for all $x\in X:=\supp(\mu)$, $r\in (0,1]$. If $D(\pi\mu,\infty)\ge \beta$, then
\[
\hdim(X\cap \pi^{-1}(y)) \le \alpha-\beta \quad\text{for all }y\in\R.
\]
\end{lemma}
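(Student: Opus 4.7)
The plan is to convert the Frostman-type information on $\pi\mu$ into a packing estimate for the fiber, and then pass from an upper box dimension bound to a Hausdorff dimension bound. The hypothesis $D(\pi\mu,\infty)\ge \beta$ is, by definition, equivalent to the statement that for every $\beta'<\beta$ there is a constant $C=C(\mu,\beta')$ with
\[
\pi\mu(B(y,r))\le C r^{\beta'} \quad \text{for all } y\in\R,\ r\in(0,1].
\]
Fix such a $\beta'$. The goal is to show that $X\cap\pi^{-1}(y)$ has upper box (and hence Hausdorff) dimension at most $\alpha-\beta'$; letting $\beta'\to\beta$ will yield the conclusion.

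First, I would localize the fiber into a thin slab using the Lipschitz property. Let $L$ denote the Lipschitz constant of $\pi$. If $x\in X\cap\pi^{-1}(y)$, then $B(x,r)\subset\pi^{-1}(B(y,Lr))$. Now choose a maximal $r$-separated subset $\{x_1,\ldots,x_N\}$ of $X\cap\pi^{-1}(y)$. The balls $B(x_i,r/2)$ are pairwise disjoint; they are centered on $X=\supp(\mu)$, so the Frostman lower bound gives $\mu(B(x_i,r/2))\ge c(r/2)^\alpha$. On the other hand, each $B(x_i,r/2)$ is contained in $\pi^{-1}(B(y,Lr/2))$, so the disjointness lets us add:
\[
N\cdot c(r/2)^\alpha \le \sum_{i=1}^N \mu(B(x_i,r/2)) \le \mu(\pi^{-1}(B(y,Lr/2))) = \pi\mu(B(y,Lr/2)) \le C(Lr/2)^{\beta'}.
\]
Solving for $N$ gives $N \lesssim_{L,c,\mu,\beta'} r^{\beta'-\alpha}$. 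Since the $2r$-balls around $\{x_i\}$ cover $X\cap\pi^{-1}(y)$, this shows that the fiber admits a cover by $O(r^{\beta'-\alpha})$ sets of diameter at most $4r$.

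Letting $r\to 0$ yields $\overline{\dim}_B(X\cap\pi^{-1}(y))\le \alpha-\beta'$, and taking $\beta'\nearrow\beta$ gives the same bound with $\alpha-\beta$. Since Hausdorff dimension is dominated by upper box dimension, the lemma follows. There is really no obstacle here: the proof is a routine packing argument, and the only subtle point is that one needs $D_\mu(\infty)$ rather than some $D_\mu(q)$ with $q<\infty$, because we require the Frostman bound on $\pi\mu$ to hold \emph{uniformly in $y$} — this is precisely what makes $D(\pi\mu,\infty)$ the natural quantity to control pointwise fiber dimensions.
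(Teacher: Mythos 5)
Your proof is correct and follows essentially the same argument as the paper: take a maximal separated subset of the fiber, use disjointness of small balls around these points together with the Frostman lower bound on $\mu$, observe via the Lipschitz property that all these balls map into a short interval around $y$, and then apply the Frostman-type upper bound on $\pi\mu$ coming from $D(\pi\mu,\infty)\ge\beta$ to bound the cardinality. The only cosmetic difference is that you pass explicitly through upper box dimension and phrase the push-forward step as $\mu\big(\bigcup_i B(x_i,r/2)\big)\le\pi\mu(B(y,Lr/2))$, while the paper writes $\mu(A)\le(\pi\mu)(\pi A)$ with $A$ the union of the balls; these are the same estimate.
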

\begin{proof}
Fix a small $\e>0$ and $y\in\R$. Let $(x_j)_{j=1}^M$ be a maximal $\e$-separated subset of $X\cap \pi^{-1}(y)$,  and let $A=\bigcup_{j=1}^M B(x_j,\e/2)$. Since the balls are disjoint, $\mu(A)\ge c M (\e/2)^\alpha$. On the other hand, $\pi A$ is contained in an interval of size $\lesssim \e$ and hence, for any $\eta>0$,
\[
\mu(A)\le (\pi\mu)(\pi A) \lesssim_\eta \e^{\beta-\eta}.
\]
Comparing the bounds, $M\lesssim_{c,\eta} \e^{\beta-\alpha-\eta}$. Now $X\cap \pi^{-1}(y)\subset \bigcup_{j=1}^M B(x_j,\e)$ by the maximality of $(x_j)$, and hence $X\cap \pi^{-1}(y)$ can be covered by $\lesssim_{c,\eta} \e^{\beta-\alpha-\eta}$ balls of radius $\e$. Letting $\eta\to 0$, we get the claim.
\end{proof}

In order to connect this lemma to the slice conjecture, our next step is to look at measures defined on invariant sets. A set $X\subset \R$ is \emph{self-similar} if there are finitely many contracting similarity transformations $f_i(x)=r_i x+ t_i$, $i\in I$ with $0<r_i<1$, such that $X=\cup_{i\in I} f_i(X)$. The family $(f_i)_{i\in I}$ is called an \emph{iterated function system (IFS)} and $X$ is its \emph{attractor}. For simplicity, from now we assume that we are in the homogeneous case, meaning that all the contractions $r_i$ are equal. %In general, the sets $f_i(X)$ may overlap substantially. If there is a nonempty open set $O$ such that $f_i(O)\subset O$ and the sets $\{f_i(O)\}_{i\in I}$ are disjoint, then we say the \emph{open set condition (OSC)} holds. Under the OSC, overlaps are well controlled.

A closed $T_a$-invariant set $X$ needs not be self-similar in the sense above. However, it is easy to see \cite[p.378]{Shmerkin19} that for every $\e>0$ there is a set $X'\supset X$ with $\hdim(X')<\hdim(X)+\e$, which is the attractor of an IFS of the form $\{ a^{-m}(x+j)\}_{j\in J}$, where $m$ and the ``digit set'' $J\subset \{0,\ldots,a^m-1\}$ depend on $\e$. Hence, in order to establish Conjecture \ref{conj:Furstenberg}, we may assume that $X, Y$ are self-similar of this special form. Since the assumption $a\nsim b$ is not affected by taking powers, we assume that $m=1$ for simplicity.

Given a homogeneous IFS $\mathcal{I}=\{ r x+ t_i\}_{i\in I}$, let $\Delta=\Delta_{\mathcal{I}}=\tfrac{1}{|I|}\sum_{i\in I}\delta_{t_i}$, where $\delta_t$ denotes a unit mass at $t$, and define the (natural) \emph{self-similar measure}
\[
\mu= \mu_{\mathcal{I}} = \ast_{n=0}^\infty S_{r^n}\Delta,
\]
where $S_u=u x$ scales by $u$. In other words, $\mu$ is the push-forward of $\prod_{n=0}^\infty \Delta$ under $(x_n)_{n=0}^\infty\mapsto \sum_{n=0}^\infty x_n r^n$. Then $\mu$ is supported on the attractor $X$, and it easy to check that for $\alpha=\log|I|/\log(1/r)$,
\[
\mu(B(x,r)) \ge c r^\alpha, \quad x\in X, r\in (0,1].
\]
The parameter $\alpha$ is the \emph{similarity dimension} of the IFS $\mathcal{I}$; if the pieces $(f_i(X))_{i\in I}$ are disjoint, then it equals $\hdim(X)$, but it is a well known open problem to understand when equality holds in the overlapping situation; see \cite{Hochman14} and P. Varj\'{u}'s survey in this volume for progress on this problem.

Fix closed $T_a, T_b$-invariant self-similar sets $X, Y$ as above, and let $\mu_X,\mu_Y$ be the corresponding self-similar measures, defined in terms of atomic measures $\Delta_X, \Delta_Y$. Let $\alpha=\hdim(X)$, $\beta=\hdim(Y)$.
As we have seen,
\[
(\mu_X\times\mu_Y)(B(p,r)) \ge c r^{\alpha+\beta}, \quad p\in X\times Y, r\in (0,1].
\]
Recall that $\Pi_u(x,y)=x+u y$. Then $\Pi_u(\mu_X\times\mu_Y) = \mu_X \ast S_u\mu_Y$. By the above discussion and Lemma \ref{lem:Frostman-exp-to-small-fiber}, in order to prove Conjecture \ref{conj:Furstenberg}, it is enough to show:
\begin{theorem} \label{thm:Frostman-convolution}
\begin{equation} \label{eq:Frostman-convolution}
D(\mu_X\ast S_u\mu_Y,\infty)= \min(\alpha+\beta,1) \quad\text{for all }u\neq 0.
\end{equation}
\end{theorem}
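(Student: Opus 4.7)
The upper bound in \eqref{eq:Frostman-convolution} is immediate, since $\mu_X \ast S_u \mu_Y = \Pi_u(\mu_X \times \mu_Y)$ is a probability measure supported in a bounded interval, and the product $\mu_X \times \mu_Y$ has Frostman exponent $\alpha+\beta$. The substance of the theorem is the matching lower bound, and this is what the plan below addresses.

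My plan is to work with $L^q$ dimensions rather than with $L^\infty$ directly, since $L^q$ norms interact well with convolutions and admit sharper inverse theorems than their $L^\infty$ counterparts. Concretely, the target becomes to show $D(\mu_X \ast S_u \mu_Y, q) \ge \min(\alpha+\beta,1) - \e$ for each $\e > 0$ and some (possibly large) $q$, and then to transfer this to an $L^\infty$ statement. To establish the $L^q$ bound, fix a small dyadic scale $\delta = 2^{-T\ell}$ and apply the measure-theoretic variant of Lemma \ref{lem:uniformization} to replace $\mu_X, \mu_Y$ by $(T;\ell)$-uniform approximations. Since $\mu_X$ is self-similar at ratio $1/a$, its branching at every $a$-adic scale is non-trivial (determined by the digit set $J_X$, with $\alpha = \log|J_X|/\log a$); likewise for $\mu_Y$ at $b$-adic scales.

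The technical heart is an $L^q$-inverse theorem, parallel to Bourgain's sumset theorem (Theorem \ref{thm:Bourgain-sumset}) but adapted to $L^q$ norms of convolutions of measures: if $\|\mu \ast \nu\|_{q,\delta}$ is essentially as large as $\|\mu\|_{q,\delta}$, so that convolving with $\nu$ barely smooths $\mu$, then at most $T$-scales one of $\mu$ or $\nu$ locally resembles either a uniform distribution on an interval (full branching $2^T$) or an atom (no branching). Assuming this, if $D(\mu_X \ast S_u \mu_Y, q)$ were to fall short of the target by $\e$, the inverse theorem would force restricted branching of $\mu_X$ (or $\mu_Y$) on a positive density of $T$-scales. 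But since $\log a/\log b$ is irrational, the $a$-adic scales equidistribute modulo the $T$-scales, so some $a$-adic scale must lie inside a forced ``interval-or-atom'' block, contradicting the genuine $|J_X|$-fold branching imposed by the self-similarity of $\mu_X$. This is precisely where the hypothesis $a \nsim b$ is essential, and where the irrationality of $\log a/\log b$ gets used in a quantitative way.

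The last step is the passage from $L^q$ back to $L^\infty$. The self-similarity of $\mu_X, \mu_Y$ gives $\mu_X \ast S_u \mu_Y$ enough structure (in particular, an effective form of exact dimensionality) that a uniform $L^q$ bound can be propagated to a Frostman estimate, ruling out sparse families of scales where the Frostman exponent might drop. The main obstacle, in my view, is proving the $L^q$-inverse theorem: it must yield the full target dimension rather than a mere $\eta$-improvement as in Theorem \ref{thm:Bourgain-projection}, starting from a hypothesis that is only about small doubling of the $L^q$ norm. Establishing this requires combining Bourgain's discretized sumset machinery with a delicate multifractal analysis, and is where the bulk of the technical work lies.
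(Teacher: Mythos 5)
Your proposal correctly identifies some important ingredients of the proof: the switch to $L^q$ dimensions, the multiscale uniformization, and a new $L^q$-inverse theorem for convolutions (which is indeed Theorem~\ref{thm:inverse-thm}, a genuine $L^q$ analogue of Bourgain's sumset theorem proved via Balog--Szemer\'{e}di--Gowers). However, the proposal misses the central mechanism that makes the theorem work \emph{for every} $u\neq 0$, and this is where the real difficulty lies.

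Any argument built on a single application of an inverse theorem (or an iterated one) can, by itself, only hope to produce results for a large set of parameters, not for every $u$. The paper resolves this with the \emph{dynamical self-similarity} framework: one writes $\nu_x = \mu_X \ast S_{e^x}\mu_Y = \ast_n S_{a^{-n}}\Delta(\mathbf{T}^n x)$ where $\mathbf{T}$ is an irrational rotation on $\mathbb{G}=[0,\log b)$, realizes that $\phi_{q,n}(x)=\log\|\nu_x^{(n)}\|_q^q$ is a \emph{subadditive cocycle}, and invokes \emph{unique ergodicity} of $(\mathbb{G},\mathbf{T})$ — this is precisely where $a\nsim b$ is used, not through any pointwise equidistribution of $a$-adic scales as you suggest — to conclude that $\liminf_n\phi_{q,n}(x)/(-(q-1)n)$ is a \emph{constant} $D(q)$ over all $x$, while the actual limit exists a.e. One then only has to compute an almost-sure quantity. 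Your ``$a$-adic scales equidistribute modulo the $T$-scales'' heuristic does not translate into a contradiction in the way you claim: the inverse theorem places constraints on the branching of $\nu_x$ and of the auxiliary measure $\rho$, not directly on the fixed branching of $\mu_X$, and the actual contradiction (in Theorem~\ref{thm:dyn-ssm-smoothening}) requires multifractal analysis to show that $\nu_x$ has ``roughly constant sub-full branching'' when $D(q)<1$.

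You also omit \emph{exponential separation}: having reduced to computing $D(q)$ almost surely, one needs the atoms of the finite approximations $\nu_{x,n}$ to be $2^{-Rn}$-separated for generic $x$ (proved via a direct Borel--Cantelli argument over $\mathbb{G}$), so that $\|\nu_{x,n}^{(Rn)}\|_q$ can be computed exactly and the ergodic theorem applied to conclude $D(q)=\alpha+\beta$. Without this the argument does not close. Finally, the passage from $L^q$ to $L^\infty$ is much simpler than the ``effective exact dimensionality'' you envision: once $D(\nu_x,q)=\min(\alpha+\beta,1)$ for each $q>1$, one simply lets $q\to\infty$ using monotonicity of $q\mapsto D_\mu(q)$.
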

This recasts the slice conjecture into a problem concerning projections and self-similarity. This is convenient, since a lot was previously known about this topic. For example, \eqref{eq:Frostman-convolution} was known to hold for Hausdorff dimension in place of $L^\infty$ dimension \cite{HochmanShmerkin12} and even for $L^q$ dimension for $q\in (1,2]$ \cite{NPS12}. However, these results used in an essential way the known fact that for arbitrary measures $\mu,\nu$, Eq. \eqref{eq:Frostman-convolution} with $q\in (1,2]$ in place of $\infty$ holds for almost every $u$. This is not true for $q>2$ and hence new ideas were needed. While the setting is different, the inspiration came from M. Hochman's work on self-similarity, see the survey \cite{Hochman18} for an overview.

\subsection{Dynamical self-similarity and exponential separation}

While $\mu_X$ and $\mu_Y$ are self-similar measures in the sense described in \S\ref{subsec:Lq-selfsim}, the convolution $\mu_X \ast S_u \mu_Y$ is not strictly self-similar  since $a\nsim b$. However, it satisfies a more flexible notion that we term \emph{dynamical self-similarity}. Suppose $a<b$, and let us define $\mathbb{G}=[0,\log b)$,  $\mathbf{T}:\mathbb{G}\to \mathbb{G}$, $x\mapsto x+\log a\bmod(\log b)$. For each $x\in \mathbb{G}$, let
\begin{equation} \label{eq:def-Delta}
\Delta(x) = \left\{
\begin{array}{ll}
  \Delta_X\ast S_{e^x}\Delta_Y & \text{if } x\in [0,\log a) \\
  \Delta_X & \text{if } x\in [\log a,\log b)
\end{array}
\right..
\end{equation}
These are finitely supported measures.  It is easy to check (see \cite[\S 1.4]{Shmerkin19}) that
\[
\nu_x := \mu_X \ast S_{e^x} \mu_Y = \ast_{n=0}^\infty S_{a^{-n}}(\Delta(\mathbf{T}^n x)).
\]
This is what we mean by dynamical self-similarity: $\nu_x$ has a structure analogous to that of $\mu_X, \mu_Y$, but the discrete measure $\Delta$ now depends on the scale and is driven by the dynamics of $\mathbf{T}$. Note that
\begin{equation} \label{eq:dynamic-ssm}
\nu_x = \nu_{x,n} \ast S_{a^{-n}}\nu_{\mathbf{T}^n x},\quad\text{where } \nu_{x,n} = \ast_{j=0}^{n-1} \Delta(\mathbf{T}^j x).
\end{equation}
This says that $\nu_x$ is a convex combination of scaled down copies, not quite of itself (as in the strictly self-similar case), but of the related measures $\nu_{\mathbf{T}^n x}$.

In the proof of Theorem \ref{thm:Frostman-convolution}, dynamical self-similarity plays a central r\^{o}le. Another key feature is \emph{exponential separation}. The measures $\nu_{x,n}$ defined in \eqref{eq:dynamic-ssm} are purely atomic; let $\mathcal{A}_x(n)$ denote the set of its atoms. Then
\[
|\mathcal{A}_x(n)| \le \prod_{j=0}^{n-1}|\supp(\Delta(\mathbf{T}^j x))|.
\]
Let $M_x(n)$ denote the minimal separation between two elements of $\mathcal{A}_x(n)$, defined to be $0$ if the inequality above is strict. We claim that there is a number $c>0$ such that
\begin{equation} \label{eq:exp-sep}
M_x(n) \ge c^n \quad\text{for } n\ge n_0(x), \text{ for Lebesgue almost all } x\in \mathbb{G}.
\end{equation}
Indeed, the distance between two elements of $\mathcal{A}_x(n)$ has the form $i a^{-n} + e^x j b^{-n}$ for some $|i|<a^n$, $|j|<b^n$, and $i,j$ are not both $0$. If $j=0$, then $i\neq 0$ and the distance is $\ge a^{-n}$. Otherwise, $x\mapsto i a^{-n} + e^x j b^{-n}$ has derivative $\ge b^{-n}$ in absolute value, and so is $\ge c^{n}$ in absolute value outside of a set of $x$ of measure $2 (cb)^n$. Since there are $\lesssim (ab)^n$ pairs $i,j$, we see that $M_x(n)\ge c^n$ outside of a set of measure $\lesssim (c\cdot ab^2)^n$. Hence if $c< (ab^2)^{-1}$, then Borel-Cantelli yields \eqref{eq:exp-sep}.

Exponential separation was introduced in the self-similar setting by Hochman \cite{Hochman14}. The way we apply it will be conceptually similar. However, in the strictly self-similar setting, this condition is often hard to check (or fails) for concrete examples, while as we have seen, in the dynamical setting the one-dimensional group $\mathbb{G}$ makes the verification straightforward.

A final ingredient of the proof of Theorem \ref{thm:Frostman-convolution} is \emph{unique ergodicity}: the dynamical system $(\mathbb{G},\mathbf{T})$ is isomorphic to a $(\log a/\log b)$-rotation on the circle. Because $a\nsim b$, Lebesgue measure on the circle is the only $\mathbf{T}$-invariant measure on $\mathbb{G}$: this is the point in the proof where the hypothesis $a\nsim b$ gets used. As we will see, this will be crucial in obtaining information for \emph{every} $x\in\mathbb{G}$ out of seemingly weaker information for \emph{almost every} $x\in\mathbb{G}$.

In the rest of this section, we indicate how dynamical self-similarity, exponential separation and unique ergodicity enter into the proof of Theorem \ref{thm:Frostman-convolution}. The theorem extends to a more general setting in which appropriate versions of these three properties hold (plus some additional technical assumptions): see \cite[\S 1.5]{Shmerkin19}.

\subsection{A subadditive cocycle and the r\^{o}le of unique ergodicity}

Fix $q\in (1,\infty)$, recall that $\nu_x = \mu_X * S_{e^x}\mu_Y = \Pi_{e^x}(\mu_X\times \mu_Y)$, and let
\[
\phi_{q,n}(x) = \log\left(\sum_{I\in\DD_{2^{-n}}} \nu_x(I)^q\right),\quad x\in\mathbb{G},
\]
where here and below logarithms are to base $2$. In order to establish Theorem \ref{thm:Frostman-convolution}, it is enough to show that
\begin{equation} \label{eq:llimit-Lq-dim}
\liminf_{n\to\infty} \frac{\phi_{q,n}(x)}{-(q-1)n} \ge \min(\alpha+\beta,1), \quad \text{for all } x\in\mathbb{G}.
\end{equation}
Indeed, it is rather easy to check that for any $x\in\mathbb{G}$
\[
\limsup_{n\to\infty} \frac{\phi_{q,n}(x)}{-(q-1)n} \le \min(\alpha+\beta,1),
\]
and so \eqref{eq:llimit-Lq-dim} yields $D(\mu_X \ast S_u \mu_Y,q) =\max(\alpha+\beta,1)$ (and the limit in the definition of $L^q$ dimension exists), from where the claim follows by taking $q\to\infty$. A priori this is only true for $u=e^x\in [1,b)$, but using self-similarity it is not hard to extend it to every $u\neq 0$.

Dynamical self-similarity and the convexity of $t^q$ imply (see \cite[Prop. 4.6]{Shmerkin19})
\[
\phi_{q,n+m}(x) \le C_q + \phi_{q,n}(x) + \phi_{q,m}(\mathbf{T}^n x).
\]
Hence $(\phi_{q,n}+C_q)_n$ is a \emph{subadditive cocycle} over the dynamical system $(\mathbb{G},\mathbf{T})$. The functions $\phi_{q,n}$ are continuous on $\mathbb{G}$ except at $x=\log a$. The unique ergodicity of $(\mathbb{G},\mathbf{T})$ can then be seen to imply (\cite[\S 4.2]{Shmerkin19}) that there is a number $D(q)$ such that
\begin{align*}
\liminf_{n\to\infty} \frac{\phi_{q,n}(x)}{-(q-1)n} &= D(q)\quad\text{for \emph{all} } x\in \mathbb{G},\\
\lim_{n\to\infty}  \frac{\phi_{q,n}(x)}{-(q-1)n} &= D(q) \quad\text{for almost all } x\in\mathbb{G}.
\end{align*}
Hence the task is now to show that $D(q)=\max(\alpha+\beta,1)$. This is a really crucial point, because one only needs to compute the almost sure limit $D(q)$ in order to reach a conclusion valid for \emph{every} $x$. This is also the strategy from \cite{NPS12} in the case $q\le 2$; the almost sure statement follows in that case by classical projection results, while the  more involved argument discussed below is required when $q>2$. Because the $L^q$ dimension is continuously decreasing in $q$, it is easy to check that $D=D_{\nu_x}$ (as a function) for almost all $x$; in particular, $D$ is differentiable outside of a countable set.

\subsection{An inverse theorem for the $L^q$ norms of convolutions}
\label{subsec:inverse-thm}

So far, discretized additive combinatorics has not entered the picture. As indicated earlier, the proof of Theorem \ref{thm:Frostman-convolution} is inspired by Hochman's work on self-similar sets and measures \cite{Hochman14}. Hochman \cite[Theorem 2.7]{Hochman14} proved an inverse theorem for the entropy of convolutions of general measures on $\R$, then applied it to self-similar measures, and concluded that under exponential separation they have the ``expected'' dimension; again we refer to \cite{Hochman18} for a survey of these ideas. We follow a parallel strategy; in particular, we rely on a new inverse theorem for the $L^q$ norms of convolutions.

If $\nu$ is finitely supported, we denote $\|\nu\|_q^q= \sum_x \nu(x)^q$ for $q\in (1,\infty)$. If $\mu,\nu$ are supported on $2^{-m}\Z\cap [0,1)$ then, by Young's inequality (which in this setting is just the convexity of $t^q$),
\begin{equation} \label{eq:Young}
\|\mu\ast\nu\|_q \le \|\mu\|_q\|\nu\|_1.
\end{equation}
We are interested in understanding what happens when we are close to equality,  in an exponential sense (up to $2^{-\e m}$ factors). This is the case if $\mu$ is the uniform measure on $2^{-m}\Z\cap [0,1)$, or if $\nu$ is supported on a single atom, but also in some ``fractal'' situations. For example, if $\mu=\nu$ is the uniform measure on the (left endpoints of the intervals making up the) sets $X_J$ from \S\ref{subsec:Bourgain-sumset}; it is also possible to construct similar examples with $\mu$ different from $\nu$. Our inverse theorem asserts, roughly speaking, that if we are close to equality in \eqref{eq:Young}, then \emph{locally} either $\mu$ looks very uniform or $\nu$ looks like an atom.

\begin{theorem}[{\cite[Theorem 2.1]{Shmerkin19}}]
\label{thm:inverse-thm}
For each $q>1$, $\delta>0$, there are $T\in\N$, $\e > 0$, such that the following holds for $\ell\ge \ell_0(q,\delta)$. Let $m=\ell T$ and let $\mu, \nu\in\PP(2^{-m}\Z\cap [0,1))$. Suppose
\[
 \|\mu \ast \nu\|_q \geq 2^{-\e m} \|\mu\|_q.
\]
Then there exist sets $X\subset \supp\mu$ and $Y\subset\supp\nu$, so that
\begin{enumerate}[(i)]
\itemsep0.3em
 \item $\| \mu|_X \|_q \geq 2^{-\delta m}\|\mu\|_q$ and $\| \nu|_Y \|_1 = \nu(Y) \geq 2^{-\delta m}$.
 \item $\mu(x_1) \leq 2 \mu(x_2)$ for all $x_1,x_2\in X$; and $\nu(y_1)\leq 2\nu(y_2)$ for all $y_1,y_2\in Y$.
 \item $X$ and $Y$ are $(T;\ell)$-uniform; let $(N_j)_{j=0}^{\ell-1}$, $(N'_j)_{j=0}^{\ell-1}$ be the associated sequences.
 \item \label{it:inv-thm:iv} For each $0\le i<\ell$, either $N_j \ge 2^{(1-\delta)T}$ or $N'_j=1$ (or both).
 \end{enumerate}
\end{theorem}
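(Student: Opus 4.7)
My plan is to adapt Bourgain's sumset theorem (Theorem~\ref{thm:Bourgain-sumset}) from the small-doubling setting to the near-equality case of Young's inequality \eqref{eq:Young}. The first step is a two-level uniformization. I first pigeonhole the atomic values $\mu(x)$, $\nu(y)$ into $O(m)$ dyadic classes of density, extracting subsets $X_0\subset\supp\mu$, $Y_0\subset\supp\nu$ on which the densities are $2$-comparable and which retain almost all of $\|\mu\|_q$ and $\|\nu\|_1$. Then I apply Lemma~\ref{lem:uniformization} to $X_0$, $Y_0$ to produce $(T;\ell)$-uniform subsets $X,Y$, losing at most $(2T)^{-\ell}=2^{-o_{T\to\infty}(m)}$ in size. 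This secures (i)--(iii) and reduces matters to the scale dichotomy (iv).

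\smallskip

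For (iv), call a scale $j$ \emph{bad} if $N_j<2^{(1-\delta)T}$ and $N'_j\ge 2$. The heart of the proof is a quantitative single-scale claim: \emph{at each bad scale, the ratio $\|\mu*\nu\|_q/\|\mu\|_q$ drops by a factor of at least $2^{-c(\delta)T}$}, for some $c(\delta)>0$ independent of the scale. Granting this, the hypothesis $\|\mu*\nu\|_q\ge 2^{-\e m}\|\mu\|_q$ forces the number of bad scales $|J|$ to be at most $\e\ell/c(\delta)<\delta\ell$ provided $\e\ll\delta$. A final restriction then eliminates each bad scale by keeping only one $Y$-child at that level (forcing $N'_j=1$), at a total cost of $2^{T|J|}\le 2^{\delta m}$ in $\nu(Y)$, which is absorbed into conclusion (i). For the single-scale claim, I would use uniformity to approximately factor $\mu,\nu$ at scale $2^{-jT}$, then apply Theorem~\ref{thm:Bourgain-sumset} to the scale-$T$ branching of $\mu$: a uniform set with at most $2^{(1-\delta)T}$ children and sub-exponential doubling must have either full or trivial branching at most sub-scales, and the presence of $N'_j\ge 2$ children on the $\nu$ side forces spreading in the convolution at scale $2^{-jT}$ that decrements $\|\mu*\nu\|_q$.

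\smallskip

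\textbf{Main obstacle.} The principal difficulty is to make the per-scale loss rigorous and to telescope it over all $\ell$ scales. The $L^q$-norm has no chain rule, so Young's inequality \eqref{eq:Young} is lossy at each cut, and naive error accumulation over $\ell\to\infty$ scales would easily swamp the $2^{-\e m}$ target. Following Hochman's strategy in the entropy analogue~\cite{Hochman14}, I would work instead with the \emph{additive deficiency} $\log\|\mu\|_q-\log\|\mu*\nu\|_q$, identify its per-scale contribution using the uniformity of $X,Y$, and show that each bad scale contributes at least $c(\delta)T$ while non-bad scales contribute $o(T)$. Transferring this scheme from entropy to $L^q$ is the main novelty: $L^q$-deficiency does not obey conditioning as cleanly as entropy does, and the argument must exploit density-uniformity of $X,Y$ as a surrogate.
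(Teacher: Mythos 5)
Your proposal correctly identifies Bourgain's sumset theorem as the combinatorial engine behind (iv), and the preliminary pigeonholing to secure (i)--(iii) is in the right spirit. But the overall architecture — uniformize first, then derive (iv) by telescoping a per-scale ``$L^q$-deficiency'' — has two genuine gaps that I do not think can be patched.

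First, the uniformization step does not preserve the hypothesis. You pass from $\mu,\nu$ to restrictions $\mu|_X,\nu|_Y$ chosen to retain most of $\|\mu\|_q$ and $\nu$-mass, but the quantity $\|\mu|_X*\nu|_Y\|_q$ can drop far below $2^{-O(\e)m}\|\mu|_X\|_q$: removing a $2^{-\delta m}$-fraction of $\mu$'s $L^q$ mass and a $2^{-\delta m}$-fraction of $\nu$'s $L^1$ mass does not control the convolution from below. Your per-scale claim is about the restricted measures, yet the only input is the convolution hypothesis for the \emph{original} measures, so the telescoping argument never gets off the ground.

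Second, and more fundamentally, the per-scale decrement you posit does not exist for $L^q$ norms of convolutions. You correctly flag that $L^q$ has no chain rule — but the ``additive deficiency'' surrogate you propose does not repair this. Even when both $X$ and $Y$ are $(T;\ell)$-uniform with the densities pinned down, $\log\|\mu|_X*\nu|_Y\|_q$ does \emph{not} decompose into a sum of per-scale contributions: convolution mixes scales, and a single bad block at level $j$ need not register as a fixed decrement in the global $L^q$ norm. Moreover, Theorem~\ref{thm:Bourgain-sumset} is a theorem about \emph{many} scales (it requires $\ell$ large for fixed $T$), so it cannot be applied block-by-block to the scale-$T$ branching at a single level $j$ as your single-scale claim requires.

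The missing ingredient is the Balog--Szemer\'edi--Gowers theorem, which is exactly the device that converts a lower bound on the $L^2$ norm of a convolution into a genuine set-theoretic small-doubling statement on a \emph{large subset}, in a form robust enough to feed into Bourgain's sumset theorem. The paper's actual route is: reduce the general exponent $q\in(1,\infty)$ to $q=2$ by H\"older; apply the asymmetric BSG theorem to get sets $X_0\subset\supp\mu$, $Y_0\subset\supp\nu$ of substantial mass with $|X_0+Y_0|$ small; apply Theorem~\ref{thm:Bourgain-sumset} globally to $X_0$ (and its relation to $Y_0$) to obtain the branching dichotomy; and only then uniformize and pigeonhole densities to clean up (i)--(iii). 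In other words, the structural information is extracted \emph{before} uniformization, not derived scale-by-scale afterwards. You should insert the reduction to $q=2$ and the BSG step at the front of the argument and abandon the per-scale telescope.
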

The reader will note the analogy with Theorem \ref{thm:Bourgain-sumset}, especially in the case $\mu=\nu$. In fact, Theorem \ref{thm:Bourgain-sumset} is a central component of the proof of Theorem \ref{thm:inverse-thm}. In order to pass from the size of sumsets to the $L^q$ norm of convolutions, we use the celebrated Balog-Szemer\'{e}di-Gowers (BSG) Theorem, see \cite[\S 2.5]{TaoVu06}. Simplifying slightly, the BSG Theorem asserts that if $\|\mu*\mu\|_2 \ge K^{-1} \|\mu\|_2$ for  $\mu\in\PP(\Z)$, then there is a set $X$ such that $\mu(X)\ge K^{-C}$ and $|X+X|\le K^C |X|$, where $C>0$ is universal. To be more precise, this holds if $\mu$ is the uniform measure on some set $X_0$. In the case $\mu=\nu$ and $q=2$, the claim is little more than the BSG Theorem combined with Theorem \ref{thm:Bourgain-sumset} and some dyadic pigeonholing. To deal with the general case, we appeal to an asymmetric version of BSG, \cite[Theorem 2.35]{TaoVu06}, while the general case $q\in (1,\infty)$ can be reduced to the case $q=2$ by an application of H\"{o}lder's inequality \cite[Lemma 3.4]{Shmerkin19}. We remark that the theorem fails at $q=1$ and $q=\infty$ due to lack of strict convexity; this is the reason why, even though we are ultimately interested in $L^\infty$ dimensions, we work with $L^q$ dimensions throughout the proof.

While motivated by the slice conjecture, Theorem \ref{thm:inverse-thm} is a result in geometric measure theory. In \cite{RossiShmerkin20}, E. Rossi and the author applied it to the growth of $L^q$ dimension under convolution. It also features in two recent results of T. Orponen \cite{Orponen21, Orponen21b} concerning projections of planar sets outside of a zero-dimensional set of directions.

\subsection{Conclusion of the proof: sketch}

We indicate very briefly how the proof of Theorem \ref{thm:Frostman-convolution} (and hence of Conjecture \ref{conj:Furstenberg}) is concluded. Given a measure $\mu$ on $\R$ we let $\mu^{(m)}$ be the purely atomic measure with
\[
\mu^{(m)}(j 2^{-m})=\mu([j2^{-m},(j+1)2^{-m})).
\]
Thus, $\mu^{(m)}$ is a discrete approximation to $\mu$ at scale $2^{-m}$. Note that $\phi_{q,m}(x)=\log \|\nu_x^{(m)}\|_q^q$. The inverse theorem is used to show:
\begin{theorem}[{\cite[Theorem 5.1]{Shmerkin19}}] \label{thm:dyn-ssm-smoothening}
Fix $q\in (1,\infty)$ such that $D$ is differentiable at $q$ and $D(q)<1$. For every $\sigma>0$ there is $\e=\e(\sigma,q)>0$ such that if $m\ge m_0(\sigma,q)$,  and $\rho\in\PP(2^{-m}\Z\cap [0,1))$ satisfies $\|\rho\|_q \le 2^{-\sigma m}$, then
\[
\| \nu_x^{(m)} \ast \rho\|_q \le 2^{-(D(q)+\e)m},\quad x\in \mathbb{G}.
\]
\end{theorem}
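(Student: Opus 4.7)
My plan is to argue by contradiction, combining a multiscale application of Theorem \ref{thm:inverse-thm} with the dynamical self-similarity of $\nu_x$ and the unique ergodicity of $(\mathbb{G},\mathbf{T})$. Suppose for contradiction that $\|\nu_x^{(m)} \ast \rho\|_q > 2^{-(D(q)+\e)m}$ for some $x\in\mathbb{G}$, where $\e = \e(\sigma,q)>0$ is to be chosen at the end. Write $m = T\ell$, with $T$ produced by Theorem \ref{thm:inverse-thm} applied with a small parameter $\delta > 0$. Using the subadditive cocycle structure of $\phi_{q,\cdot}$ and the unique ergodicity of $(\mathbb{G},\mathbf{T})$ (with the differentiability of $D$ at $q$ providing the uniform control of cocycle asymptotics in $q$ needed later), I would first reduce to the case $\|\nu_x^{(m)}\|_q = 2^{-(q-1)D(q)m/q+o(m)}$. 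The ratio $\|\nu_x^{(m)}\ast\rho\|_q/\|\nu_x^{(m)}\|_q$ is then at least $2^{-(D(q)/q+\e)m+o(m)}$, large enough to engage Theorem \ref{thm:inverse-thm} provided $\delta$ is chosen so that the threshold it produces exceeds $D(q)/q+\e$.

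Next, I would apply Theorem \ref{thm:inverse-thm} in a multiscale fashion. Iterating dynamical self-similarity \eqref{eq:dynamic-ssm} decomposes $\nu_x^{(m)}$ as a convolution across the $\ell$ scales $2^{-jT}$, $j=0,\ldots,\ell-1$. After uniformizing $\rho$ via Lemma \ref{lem:uniformization} and applying the inverse theorem scale by scale (with dyadic $L^q$-ratio pigeonholing at each level), I expect to obtain $(T;\ell)$-uniform subsets $X\subset\supp(\nu_x^{(m)})$ and $Y\subset\supp(\rho)$ together with branching sequences $(N_j),(N'_j)$ satisfying $\|\nu_x^{(m)}|_X\|_q \ge 2^{-\delta m}\|\nu_x^{(m)}\|_q$, $\rho(Y)\ge 2^{-\delta m}$, weight-balance, and the dichotomy: at each scale $j$, either $N_j\ge 2^{(1-\delta)T}$ (full branching for $\nu_x$) or $N'_j=1$ ($\rho$ atomic).

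I would then bring in the spread hypothesis $\|\rho\|_q\le 2^{-\sigma m}$ through Jensen's inequality: $|\supp(\rho)|\ge 2^{\sigma q m/(q-1)}$, and since $|Y|\le 2^{T(\ell-A)}$ where $A$ is the number of atomic scales, this forces $A/\ell\le 1-\sigma q/(q-1)+O(\delta)$. Consequently a fraction $\ge \sigma q/(q-1)-O(\delta)$ of the $\ell$ scales must have $\nu_x$ full-branching. Using the identity $\log\|\nu_x^{(m)}|_X\|_q^q\approx -(q-1)\sum_j\log N_j$ valid for weight-balanced $(T;\ell)$-uniform measures, this upper-bounds $\|\nu_x^{(m)}|_X\|_q\le 2^{-(1-\delta)\sigma m+o(m)}$; comparing with the inverse-theorem lower bound $\|\nu_x^{(m)}|_X\|_q\ge 2^{-\delta m}\|\nu_x^{(m)}\|_q$ yields the implicit relation $D(q)\ge (1-O(\delta))\,\sigma q/(q-1)$.

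The hard part, and the main obstacle, will be converting this partial lower bound on $D(q)$ into a genuine contradiction with $D(q)<1$, since for small $\sigma$ the estimate $D(q)\gtrsim \sigma q/(q-1)$ falls well short of $1$. The closing argument should exploit the additional smoothing gain at each full-branching scale, where the convolution with $\rho$ produces extra decay beyond Young's trivial bound via the second branch of the dichotomy, and accumulate these per-scale gains into a multiplicative factor that bootstraps the estimate. Here the differentiability of $D$ at $q$ enters through linearizing $L^q$-spectrum comparisons across nearby $q$ values, making the iteration controllable. Iterating in this way should eventually force $D(q)\ge 1-O(\e)$, contradicting $D(q)<1$ once $\e=\e(\sigma,q)$ is taken sufficiently small.
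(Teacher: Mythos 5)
Your opening moves are on the right track and agree with the paper's sketch: argue by contradiction, invoke the inverse theorem (Theorem~\ref{thm:inverse-thm}) to obtain uniformized sets $X\subset\supp(\nu_x^{(m)})$ and $Y\subset\supp(\rho)$ with the branching dichotomy, and then use $\|\rho\|_q\le 2^{-\sigma m}$ to force $Y$ to have nontrivial branching at a positive proportion of scales, hence $X$ to have almost full branching there. The arithmetic showing that a fraction $\gtrsim \sigma q/(q-1)$ of scales are full-branching for $X$ is correct.

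The genuine gap is exactly the one you flag: from ``full branching at a $c\sigma$-fraction of scales'' you can only extract $D(q)\gtrsim\sigma q/(q-1)$, which is vacuous for small $\sigma$. You then propose to close this by ``accumulating per-scale gains'' and ``bootstrapping'' toward $D(q)\ge 1-O(\e)$, but this is never made concrete, and it is not the mechanism the paper uses. The paper's actual closing step is categorical rather than quantitative: it shows that a positive-density set of full-branching scales for $X$ is outright \emph{impossible} when $D(q)<1$. This rests on the multifractal fact that if $D'(q)$ exists, then the $L^q$-mass $\|\nu_x^{(m)}\|_q^q$ is asymptotically concentrated on dyadic intervals whose $\nu_x$-mass is $\approx 2^{\tau'(q)m}$, with $\tau=(q-1)D$. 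Combined with the dynamical self-similarity \eqref{eq:dynamic-ssm} (so the same concentration occurs at every intermediate block of scales), this forces the uniformized $X$ carrying most of the $L^q$-norm to have \emph{roughly constant} branching $N_j\approx 2^{cT}$ with $c<1$; almost full branching $N_j\ge 2^{(1-\delta)T}$ on a positive fraction of scales then contradicts $D(q)<1$ directly, no matter how small $\sigma$ is. This concentration/constant-branching lemma is what the paper calls ``one of the biggest hurdles,'' and it is absent from your plan. Relatedly, your stated uses of the hypothesis $D'(q)$ exists (``uniform control of cocycle asymptotics in $q$,'' ``linearizing $L^q$-spectrum comparisons across nearby $q$'') do not match its actual role: differentiability of $\tau$ at $q$ is precisely what makes the Legendre point unique and yields the mass-concentration statement.

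Two smaller issues. First, the inverse theorem is a single-shot multiscale statement, already producing uniform $X,Y$ and the branching dichotomy; there is no need to first uniformize $\rho$ via Lemma~\ref{lem:uniformization} and then run the inverse theorem ``scale by scale.'' Second, your attempt to ``engage Theorem~\ref{thm:inverse-thm} provided $\delta$ is chosen so that the threshold it produces exceeds $D(q)/q+\e$'' cannot work: the threshold $\e$ produced by Theorem~\ref{thm:inverse-thm} shrinks as $\delta\to 0$, so you cannot push it above a fixed positive quantity by adjusting $\delta$. The ratio $\|\nu_x^{(m)}\ast\rho\|_q/\|\nu_x^{(m)}\|_q$ that you feed into the inverse theorem must be $2^{-o(m)}$; making that so requires formulating the contradiction hypothesis in the correct normalization relative to $\|\nu_x^{(m)}\|_q$, not the one you wrote down.
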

The assumption  $\|\rho\|_q \le 2^{-\sigma m}$ says that $\rho$ is not too close to being atomic in the $L^q$ sense. Since $D(q)=D_{\nu_x}(q)$ for almost all $x$, the theorem says that convolving with any quantitatively non-atomic measure results in a smoothening of the $L^q$ norm of $\nu_x$ at small scales (unless $D(q)=1$, in which case $\nu_x$ was already ``maximally smooth''). This is, again, a dynamical, $L^q$ version of a result of Hochman, \cite[Corollary 5.5]{Hochman14}. Heuristically, this is deduced from Theorem \ref{thm:inverse-thm} as follows: assuming the conclusion fails, let $X, Y$ be the sets provided by the inverse theorem. Using that $\|\rho\|_q \le 2^{-\sigma m}$, one can see that $Y$ has positive branching ($N'_j>1$) for a positive proportion of scales $j$. Then by \ref{it:inv-thm:iv}, $X$ must have ``almost full branching'' $(N_j \ge 2^{(1-\delta)T})$ at those scales. But the dynamical self-similarity of $\nu_x$ can be used to rule this out, since it implies that $\nu_x$ should have ``roughly constant branching'', which is less than full since $D(q)<1$. Making this precise is one of the biggest hurdles in the proof of Theorem \ref{thm:Frostman-convolution}; it relies on ideas from multifractal analysis, in particular, the fact that if $D'(q)$ exists then $\|\mu_x^{(m)}\|_q$ is heavily concentrated on points of mass $\approx 2^{T'(q)m}$, where $T=(q-1)D$.

Once Theorem \ref{thm:dyn-ssm-smoothening} is in hand, the rest of the proof of Theorem \ref{thm:Frostman-convolution} is a fairly straightforward adaptation of Hochman's arguments. Theorem \ref{thm:dyn-ssm-smoothening} is used to show that (always assuming $D'(q)$ exists and $D(q)<1$)
\[
\lim_{n\to\infty}\frac{\log \|\nu_{x,n}^{(R n)}\|_q^q}{(q-1)n\log(1/a)} = D(q)\quad\text{for any  } R>\log a \text{ and almost all }x\in\mathbb{G},
\]
where $\nu_{x,n}$ is the discrete approximation to $\nu_x$ defined in \eqref{eq:dynamic-ssm}. See \cite[Proposition 5.2]{Shmerkin19}. Now the exponential separation \eqref{eq:exp-sep} comes into play: if $R$ is taken large enough in terms of $c$, then the atoms of $\nu_{x,n}$ are $2^{-R n}$-separated for $n\ge n_0(x)$, and this easily yields
\[
\log\|\nu_{x,n}^{(R n)}\|_q^q = \log\|\nu_{x,n}\|_q^q = (1-q)\sum_{j=0}^{n-1} \log|\supp(\Delta(\mathbf{T}^j x))|.
\]
Recalling \eqref{eq:def-Delta}, the ergodic theorem can then be used to conclude that if $D(q)<1$, then $D(q)=\alpha+\beta$, completing the proof.

\subsection{Extensions and open problems}

\subsubsection{Slices of McMullen carpets}
The set $X\times Y$ in Conjecture \ref{conj:Furstenberg} is invariant under the toral endomorphism $T_a\times T_b$, but there are many closed invariant sets under $T_a\times T_b$ which are not cartesian products. The simplest class are McMullen carpets: given $J\subset \{0,\ldots,a-1\}\times \{0,\ldots,b-1\}$, let
\[
E_J = \left\{ \big(\sum_{n=1}^\infty x_n a^{-n}, \sum_{n=1}^\infty y_n b^{-n}\big): (x_n,y_n)\in J \text{ for all } n \right\}.
\]
If $J=J_1\times J_2$ then we are in the setting of Conjecture \ref{conj:Furstenberg}, but otherwise the methods of \cite{Shmerkin19, Wu19} do not directly apply. One new difficulty is that these carpets often have different Hausdorff, Minkowski and Assouad dimension, while these all coincide in the product case. Nevertheless, by modifying the method of Wu, A. Algom \cite{Algom20} proved an upper bound for the dimension of linear slices of McMullen carpets, that reduces to Conjecture \ref{conj:Furstenberg} in the product case. The bound was recently improved further by A. Algom and M. Wu \cite{AlgomWu21}, but the optimal result remains elusive.

\subsubsection{Bernoulli convolutions} Given $\lambda\in (1/2,1)$, we define the \emph{Bernoulli convolution} (BC) $\nu_\lambda=\ast_{n=0}^\infty S_{\lambda^n}\Delta$, where $\Delta=\tfrac{\delta_{-1}+\delta_1}{2}$. This is the simplest family of overlapping self-similar measures, yet it remains a major open problem with deep connections to number theory to elucidate their properties. BCs are extensively discussed in \cite{Hochman18} and in P. Varj\'{u}'s article in this volume, so here we only point out that the method of proof discussed in this section also yields that $D(\nu_\lambda,\infty)=1$ for all $\lambda$ with exponential separation (a set of Hausdorff co-dimension zero) and $\nu_\lambda$ is absolutely continuous with a density in $L^q$ for all $q\in (1,\infty)$, for all $\lambda$ outside of a (non-explicit) set of exceptions of zero Hausdorff dimension. See \cite[Section 9]{Shmerkin19}. In a major breakthrough, P. Varj\'{u} \cite{Varju19} proved that $\nu_\lambda$ has Hausdorff dimension $1$ (which is weaker than $D(\nu_\lambda,q)=1$ if $q>1$) for \emph{all} transcendental $\lambda$. It remains a challenge to extend Varj\'{u}'s result to $L^\infty$ and even to $L^q$ dimensions.

\subsubsection{Higher dimensions} A natural higher dimensional version of Conjecture \ref{conj:Furstenberg} involves slicing the product of closed sets $(X_i)_{i=1}^d$ invariant under $(T_{a_i})_{i=1}^d$, with affine subspaces. As another application of the dynamical self-similarity framework, we have:
\begin{theorem} \label{thm:slice-hyperplanes}
Let $X_i$ be closed, $T_{a_i}$-invariant sets, $i=1,\ldots, d$, with $a_i\nsim a_j$ for $i\neq j$. Then
\[
\hdim( (X_1\times\cdots \times X_d)\cap H)\le \max(\hdim(X_1)+\cdots+\hdim(X_d)-1,0)
\]
for all affine hyperplanes $H\subset\R^d$ not containing a line in a coordinate direction.
\end{theorem}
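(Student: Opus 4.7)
The plan is to extend the dynamical self-similarity machinery behind Theorem \ref{thm:Frostman-convolution} to a $(d-1)$-dimensional parameter space. The inverse theorem, subadditive cocycle, and exponential-separation arguments are one-dimensional in the target and so carry over essentially unchanged; the genuinely new ingredient is the higher-dimensional torus dynamics.

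First, as in \S\ref{subsec:Lq-selfsim}, I would approximate each closed $T_{a_i}$-invariant set $X_i$ from above (losing an arbitrarily small $\varepsilon$ in Hausdorff dimension) by the attractor of a homogeneous digit IFS at base $a_i$, and let $\mu_i$ be its natural self-similar measure, so that $\mu_i(B(y,r))\gtrsim r^{\alpha_i}$ with $\alpha_i := \hdim(X_i)$. The product $\mu = \mu_1\times\cdots\times\mu_d$ then obeys $\mu(B(p,r))\gtrsim r^{\alpha}$ with $\alpha := \alpha_1+\cdots+\alpha_d$. An affine hyperplane $H$ not containing a coordinate line has normal $u$ with all $u_i\neq 0$, so $H\cap \prod_i X_i$ is a fiber of $\Pi_u(x)=\sum_i u_i x_i$, and $\Pi_u\mu = S_{u_1}\mu_1\ast\cdots\ast S_{u_d}\mu_d$. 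By Lemma \ref{lem:Frostman-exp-to-small-fiber}, it suffices to prove
\[
D\bigl(S_{u_1}\mu_1\ast\cdots\ast S_{u_d}\mu_d,\,\infty\bigr)\,\ge\,\min(\alpha,1)\quad\text{for all } u\in (\R\setminus\{0\})^d.
\]

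Second, I would set up the dynamics. After relabelling, assume $a_1\le\cdots\le a_d$ and zoom at rate $a_1$. Work on $\mathbb{G}=\prod_{i=2}^d \R/(\log a_i)\Z$ with the rotation $\mathbf{T}(x_2,\ldots,x_d)=(x_2+\log a_1,\ldots,x_d+\log a_1)$. For each $x\in\mathbb{G}$, let $\Delta(x)$ be the finitely-supported measure obtained by convolving $\Delta_1$ with the $S_{e^{x_i}}\Delta_i$ for those indices $i\ge 2$ with $x_i\in[0,\log a_1)$ (the scales at which zooming by $a_1$ crosses a natural scale of $\mu_i$). Encoding $u$ in an initial parameter $\tilde x$, one obtains a full dynamical self-similarity as in \eqref{eq:dynamic-ssm}, namely $\nu_{\tilde x}=\nu_{\tilde x,n}\ast S_{a_1^{-n}}\nu_{\mathbf{T}^n\tilde x}$. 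Exponential separation \eqref{eq:exp-sep} generalizes by the same Borel--Cantelli / derivative argument: pairwise differences of the atoms of $\nu_{\tilde x,n}$ are exponential-polynomial functions of $x$ whose partial derivatives are exponentially large relative to the number of competing configurations, so Lebesgue-typical $x\in\mathbb{G}$ enjoy $c^n$-separation for $n$ large.

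The main obstacle is the ergodic step. The argument leading to Theorem \ref{thm:dyn-ssm-smoothening} critically applied \emph{unique ergodicity} of $(\mathbb{G},\mathbf{T})$ to upgrade the almost-sure existence of the cocycle limit $D(q)=\lim \phi_{q,n}/(-(q-1)n)$ to a pointwise lower bound for every $x$. Pairwise independence $a_i\nsim a_j$ only forces each $\log a_1/\log a_i$ to be irrational, which does not in general imply $\mathbb{Q}$-linear independence of $\log a_1,\ldots,\log a_d$; consequently $\mathbf{T}$ may fail to be minimal on $\mathbb{G}$. To circumvent this, I would decompose $\mathbb{G}$ into the cosets of the closed subgroup $\mathbb{H}:=\overline{\{\mathbf{T}^n 0:n\in\Z\}}$, on each of which $\mathbf{T}$ is minimal and uniquely ergodic by construction. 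Pairwise independence guarantees that $\mathbb{H}$ projects surjectively onto every factor $\R/(\log a_i)\Z$, which is exactly the non-degeneracy required for the exponential-separation argument to survive restriction to a single coset. Since Theorems \ref{thm:inverse-thm} and \ref{thm:dyn-ssm-smoothening} concern convolutions on $\R$ and do not see the dimension of $\mathbb{G}$, they apply verbatim on each coset and yield a constant $D_{\mathbb{H}}(q) = \min(\alpha,1)$ there. Concatenating over cosets gives the pointwise bound $D(\Pi_u\mu,\infty)\ge\min(\alpha,1)$ for every admissible $u$, completing the proof.
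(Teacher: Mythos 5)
Your proposal follows the paper's stated strategy very closely: approximate $X_i$ from above by homogeneous self-similar sets, reduce via Lemma~\ref{lem:Frostman-exp-to-small-fiber} to a lower bound on $D(\Pi_u\mu,\infty)$, and then set up dynamical self-similarity with a rotation on the $(d-1)$-torus $\mathbb{G}=\prod_{i\ge 2}\R/(\log a_i)\Z$, invoking the inverse theorem / cocycle / exponential-separation machinery (the paper's own proof is precisely a pointer to \cite[Theorem 1.11]{Shmerkin19} plus \cite{Shmerkin21}). Your observation that pairwise multiplicative independence does \emph{not} imply minimality of $\mathbf{T}$ on $\mathbb{G}$, and that one must work on the orbit-closure cosets, is exactly the issue the paper alludes to when contrasting its method with \cite{Wu19,Austin20} (which would need $(1/\log a_i)_i$ to be $\mathbb{Q}$-independent).

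The one place where your write-up is too quick is the exponential-separation step on a coset, which is precisely what the paper flags as ``a little bit of work'' and defers to \cite{Shmerkin21}. You assert that surjective projection of $\mathbb{H}$ onto each factor is ``exactly the non-degeneracy required,'' but this is not self-evident. Surjectivity onto each factor does give that $V=\mathrm{Lie}(\mathbb{H}_0)$ projects nontrivially onto every coordinate axis (equivalently $e_j\notin V^\perp$), but the Borel--Cantelli argument needs a quantitative lower bound on the \emph{tangential} derivative $|\mathrm{proj}_V(\nabla f_n)|$ of the atom-difference functions $f_n(x)=i_1 a_1^{-n}+\sum_{j\ge2} i_j e^{x_j}a_j^{-n_j}$, uniformly over the exponentially many integer tuples $(i_j)$. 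Since all components $i_j e^{x_j}a_j^{-n_j}$ are of the same exponential order and $\{\mathrm{proj}_V(e_j)\}_j$ is linearly dependent when $\dim V<d-1$, the full gradient can, for specific digit configurations, lie close to $V^\perp$. One must show the set of $x$ on the coset for which this near-cancellation plus $|f_n|<c^n$ occurs has summably small Haar measure. This is genuinely a calculation, not a consequence of surjectivity alone, and is where the content of \cite{Shmerkin21} lies; your sketch should at least acknowledge that some additional Diophantine/derivative bookkeeping is needed there, rather than treating it as automatic.
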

The case $d=2$ is Conjecture \ref{conj:Furstenberg}. The higher dimensional case follows in a similar way, using \cite[Theorem 1.11]{Shmerkin19} and Lemma \ref{lem:Frostman-exp-to-small-fiber} for projections from $\R^d$ to $\R$, although verifying the exponential separation assumption takes a little bit of work, see \cite{Shmerkin21}. We underline that it seems hard to prove such a result using the approaches of \cite{Wu19, Austin20}. To be more precise, it is possible but under the more restrictive assumption that $(1/\log a_i)_{i=1}^d$ are linearly independent over $\mathbb{Q}$. This is unknown in most cases, for example for $2,3,5$.

What about slicing with lower dimension subspaces? For this, we need to consider projections $\Pi:\R^d\to\R^k$ and in turn this requires an inverse theorem for convolutions in $\R^k$. This is necessarily more challenging because there is a new obstruction to smoothening of convolutions: having the measures (locally) concentrated on lower dimensional subspaces. Nevertheless, Hochman \cite{Hochman17} proved an inverse theorem for the entropy of convolutions in arbitrary dimension. In \cite{Shmerkin21}, using Hochman's result, we derive an $L^q$ version, and use it to deduce a generalization of Theorem \ref{thm:slice-hyperplanes} to slices with planes of arbitrary dimension.

\section{Falconer's distance set problem}

\label{sec:Falconer}

\subsection{Introduction}

We now discuss Conjecture \ref{conj:Falconer}. It is a natural continuous analog of the P. Erd\H{o}s distinct distances conjecture, stating that $N$ points in $\R^d$ determine $\gtrsim_{d,\e} N^{2/d-\e}$ distinct distances. Erd\H{o}s' conjecture was famously resolved in the plane by L. Guth and N. Katz \cite{GuthKatz15}, but the techniques they used seem hard to extend to the continuous setting. As shown already by Falconer \cite{Falconer85} , the measurability condition in Conjecture \ref{conj:Falconer} is crucial.

From now on fix a Borel set $X\subset\R^d$. Falconer \cite{Falconer85} proved that $|\Delta(X)|>0$ provided $\hdim(X)>(d+1)/2$  (here and below, $|\cdot|$ denotes Lebesgue measure as well as cardinality). In the plane, the threshold $3/2$ was lowered successively to $13/9$ by J. Bourgain \cite{Bourgain94}, to $4/3$ by T. Wolff \cite{Wolff99}, and recently to $5/4$ by L. Guth, A. Iosevich, Y. Ou and H. Wang \cite{GIOW20}. There have been parallel developments in higher dimensions \cite{Erdogan05, DGOWWZ21, DuZhang19, DIOWZ21}.  These results use deep methods from restriction theory in harmonic analysis; the connection to restriction was made by P. Mattila \cite{Mattila87}, through what has become known as the \emph{Mattila integral}. B. Liu \cite{Liu19} found a pinned version of the Mattila integral; that is, with $\Delta(X)$ replaced by $\Delta_y(X)=\{|x-y|:x\in X\}$. As a result, all the previous results are also valid for pinned distance sets. Summarizing, the current world records are \cite{DuZhang19,GIOW20,DGOWWZ21,DIOWZ21}: let
\[
\alpha_d = \left\{
\begin{array}{lll}
  \frac{d}{2}+\frac{1}{4} & \text { if } & \text{$d$ is even} \\
  \frac{d}{2}+\frac{1}{4}+\frac{1}{8d-4} & \text{ if } & \text{$d$ is odd}
\end{array}
\right..
\]
Then for a Borel set $X\subset\R^d$ with $\hdim(X)>\alpha_d$ there is $y\in X$ such that $|\Delta_y(X)|>0$.

What if we assume $\hdim(X)=d/2$ instead? Falconer \cite{Falconer85} proved that in this case $\hdim(\Delta(X))\ge 1/2$. There are at least three reasons why this is a natural barrier to overcome. (i) If $R$ was a $1/2$-dimensional Borel subring of the reals, then the distance set of $X=R\times\cdots\times R\subset \R^d$ would be contained in a locally Lipschitz image of $R$. By the product formula for dimension, $\hdim(X)\ge d/2$, so if $R$ existed then Falconer's bound would be sharp. As it turns out, no such Borel subring exists \cite{EdgarMiller03}, but this was an open problem for nearly 40 years. (ii) For a natural single-scale version of the problem, the exponent $1/2$ is actually sharp. This is the ``train track'' example introduced by N. Katz and T. Tao \cite{KatzTao01}: given a small scale $\delta>0$, let $X\subset [0,1]^2$ be the union of $\sim \delta^{-1/2}$ equally spaced vertical rectangles of size $\delta\times\delta^{1/2}$, with a $\delta^{1/2}$ space between consecutive rectangles. See \cite[Figure 1]{KatzTao01}. Then $|X|_\delta \sim \delta^{-1}$ and
\[
 |X\cap B(x,r)|_{\delta} \sim r |X|_\delta, \quad x\in X, r\in [\delta,1].
\]
Hence $X$ looks very much like a set of dimension $1$ (even Ahlfors regular) down to resolution $\delta$. Yet, the set of distances between two separated rectangles is contained in an interval of length $\lesssim\delta$, and this can be used to show that $|\Delta(X)|_\delta \sim \delta^{-1/2}$. (iii) Finally, if the Euclidean norm is replaced by the $\ell_\infty$ norm, then again it is not hard to see that the threshold $1/2$ is sharp, so any improvement must exploit the curvature of the Euclidean norm. We also emphasize that even though the harmonic analytic methods described above also yield dimension estimates when $\hdim(X)\le \alpha_d$, they do not say anything for $\hdim(X)=d/2$.

Despite these challenges, we have:
\begin{theorem}[Katz-Tao \cite{KatzTao01}, Bourgain \cite{Bourgain03}]  \label{thm:Bourgain-distance}
There is a universal $\eta>0$ such that if $X\subset\R^2$ is a Borel set with $\hdim(X)\ge 1$, then $\hdim(\Delta(X))\ge 1/2+\eta$.
\end{theorem}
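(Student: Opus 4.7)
The plan is to reduce Theorem \ref{thm:Bourgain-distance} to Bourgain's discretized projection theorem (Theorem \ref{thm:Bourgain-projection}) via a local linearization of the distance function. The key observation is that for a reference point $x_0 \in X$, any $x \in B(x_0,\sqrt\delta)$, and any pin $y$ at distance $\sim 1$ from $x_0$, a Taylor expansion gives
\[
|x-y| = |x_0-y| + \langle x - x_0, v_y\rangle + O(\delta),
\]
where $v_y = (x_0-y)/|x_0-y|$. Hence, at scale $\delta$, the pinned distance function $x\mapsto|x-y|$ restricted to $B(x_0,\sqrt\delta)$ coincides (up to negligible error) with the orthogonal projection onto the direction $v_y$. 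Rescaling $X\cap B(x_0,\sqrt\delta)$ by $\delta^{-1/2}$ converts a $\sqrt\delta$-slice of $\Delta_y(X)$ into an orthogonal projection of a set in $[0,1]^2$ at the coarser scale $\sqrt\delta$, the regime in which Bourgain's theorem can be applied.

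Concretely, I would proceed in four steps. \textbf{(1)} Apply Frostman's lemma (valid since $\hdim(X)\ge 1$) and Lemma \ref{lem:uniformization} to reduce to a $(T;\ell)$-uniform $\delta$-discretized subset of $X$ of cardinality $\gtrsim \delta^{-1+\e}$ obeying a non-concentration bound $|X\cap B(z,r)|_\delta \le \delta^{-\kappa} r^\beta |X|_\delta$ for some $\beta>0$. Assume for contradiction that $|\Delta(X)|_\delta \le \delta^{-1/2-\eta}$. \textbf{(2)} Pigeonhole a reference point $x_0 \in X$ so that the rescaled localization $\widetilde X_{x_0} := \delta^{-1/2}(X\cap B(x_0,\sqrt\delta)-x_0) \subset [0,1]^2$ has cardinality $\gtrsim (\sqrt\delta)^{-\alpha}$ with $\alpha$ close to $1$ and inherits non-concentration at the rescaled scale $\sqrt\delta$. \textbf{(3)} Apply Theorem \ref{thm:Bourgain-projection} to $E = \widetilde X_{x_0}$ at scale $\sqrt\delta$, with direction set $V$ obtained by collecting $v_y$ as $y$ ranges over pins of $X$ away from $x_0$; this yields $\eta_0 > 0$ such that, for all but a Bourgain-small fraction of directions $v \in V$, the orthogonal projection satisfies $|\pi_v(\widetilde X_{x_0})|_{\sqrt\delta} \ge (\sqrt\delta)^{-\alpha/2-\eta_0}$. \textbf{(4)} Translate back: this projection bound gives a $\sqrt\delta$-slice of $\Delta_y(X)$ of cardinality $\gtrsim \delta^{-1/4-\eta_0/2}$; combining such gains across the $\sim \delta^{-1/2}$ annular radii around $y$ and over a positive fraction of pins $y \in X$ yields $|\Delta_y(X)|_\delta \gtrsim \delta^{-1/2-\eta'}$ for some $\eta' > 0$ and some pin $y \in X$, contradicting the hypothesis once $\eta < \eta'$.

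The principal obstacle is ensuring that the direction set $V$ obtained from radial projections satisfies a non-concentration bound with exponent comparable to $\beta$, which is what Bourgain's theorem requires of the direction set. For this one needs a radial projection theorem: for a suitable $x_0$, the radial projection of $X$ from $x_0$ must not concentrate on any proper arc of $S^1$. Securing this is the step that uses $\hdim(X) \ge 1$ most heavily, and it is precisely what rules out the Katz--Tao ``train track'' obstruction, where $X$ is concentrated in thin tubes aligned so that radial projections from typical points collapse. An additional technical difficulty is the multiscale bookkeeping needed to convert the per-slice gain at the rescaled scale $\sqrt\delta$ into a gain for the full distance set at scale $\delta$, which requires careful coordination of the pigeonholes over $x_0$, annuli, and rescalings so that the cumulative logarithmic losses remain much smaller than $\eta_0$. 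As an alternative route, one can instead invoke Bourgain's discretized sum-product theorem (Theorem \ref{thm:Bourgain-sum-product}) applied to polynomial identities between squared distances on triples of points in $X$, which is how Katz and Tao originally proceeded.
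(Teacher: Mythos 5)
Your proposal correctly identifies the main ingredients of the approach that the paper itself describes for reproving this theorem — Bourgain's discretized projection theorem, a radial projection theorem of Orponen to control the direction set, linearization of the distance map by Taylor expansion, Frostman measures, and uniformization — and you rightly flag the radial projection non-concentration as the crux that excludes the Katz–Tao train track. You also correctly cite the original Katz–Tao/Bourgain route via discretized sum-product with a bilinear intermediate step.

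However, there is a genuine gap in the rigid two-scale $(\delta,\sqrt\delta)$ decomposition, and it is exactly the issue the paper emphasizes in \S\ref{subsec:multiscale-entropy}--\ref{subsec:radial}. After uniformizing, a Frostman measure of exponent $1-\e$ may have nearly all of its entropy concentrated in the range of scales $[\sqrt\delta,1]$, in which case the conditional measures $\mu_{\DD_{\sqrt\delta}(x)}$ are essentially atomic, the rescaled sets $\widetilde X_{x_0}$ have bounded cardinality, and both step (2) and step (3) of your outline collapse (Bourgain's theorem applied to a near-point yields nothing). The fix is not a different reference ball but a genuinely multiscale argument: one must use a formula of the type of Proposition~\ref{prop:entropy-of-image-measure}, which sums local projection entropies over a whole sequence of scales $\delta_0>\delta_1>\dots>\delta_J=\delta$, and — crucially — choose the $\delta_j$ adaptively according to the branching structure of $\mu$ (produced by Lemma~\ref{lem:uniformization}), so that between consecutive scales the conditionals have non-trivial dimension and non-concentration. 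Your step (4), ``combining gains across the $\sim\delta^{-1/2}$ annular radii,'' is doing this work implicitly but informally; the entropy formula turns it into a rigorous and lossless sum, and also takes care of the fact that contributions from different annuli need not be disjoint. Related to this: the purely $\delta$-discretized statement you set up in step (1), with non-concentration $|X\cap B(z,r)|_\delta\le\delta^{-\kappa}r^{\beta}|X|_\delta$, is literally false (the train track satisfies it with $\beta=1$); the Hausdorff hypothesis must survive the discretization, which in practice means carrying the Frostman \emph{measure} through the argument rather than reducing at the outset to a $\delta$-set, so that the radial projection theorem — which is a statement about measures — can be invoked. Finally, lower bounds obtained via \eqref{eq:entropy-dyadic} are box-counting bounds; upgrading them to Hausdorff dimension requires the more technical variant of the entropy formula mentioned after Proposition~\ref{prop:entropy-of-image-measure}.
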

Katz and Tao \cite{KatzTao01}  proved that the discretized sum-product conjecture (Theorem \ref{thm:Bourgain-sum-product}) implies the above theorem. As we saw, Bourgain \cite{Bourgain03} then proved Theorem \ref{thm:Bourgain-sum-product}. In order to avoid ``train track'' examples, Katz and Tao had as an intermediate step a ``discretized bilinear'' version of Falconer's problem. This approach does not seem to extend to pinned distance sets. The value of $\eta$, although effective in principle, is hard to track down and would in any event be tiny (recall that the conjecture is $\eta=1/2$).

\subsection{A nonlinear version of Bourgain's projection theorem}
\label{subsec:nonlinear-Bourgain}

There is a formal analogy between Theorems \ref{thm:Bourgain-projection-dim} and \ref{thm:Bourgain-distance}: both provide an ``$\eta$-impovement'' over a natural barrier, and as we saw they are both connected to discretized sum-product. We take this analogy further. We can view $\{ \Delta_y(x) =|x-y|\}_{y\in X}$ as a family of (nonlinear) projections. One can then ask if it satisfies an estimate similar to that of Theorem \ref{thm:Bourgain-projection}. It turns out that it does:
\begin{theorem}[{\cite[Theorem 1.1]{Shmerkin20}}]  \label{thm:nonlinear-distance}
Given $\alpha\in (0,2)$, $\beta>0$, there is $\eta=\eta(\alpha,\beta)>0$ such that the following holds: let $X\subset\R^2$ be a Borel set with $\hdim(X)\ge\alpha$. Then
\begin{equation} \label{eq:nonlinear-distance}
\hdim(\mathcal{E}(X,\eta)) \le \beta,\quad\text{where } \mathcal{E}(X,\eta)= \{ y\in\R^2: \hdim(\Delta_y(X))<\tfrac{\alpha}{2}+\eta\}.
\end{equation}
\end{theorem}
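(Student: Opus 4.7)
My plan is to prove Theorem \ref{thm:nonlinear-distance} in analogy with the deduction of Theorem \ref{thm:Bourgain-distance} from Theorem \ref{thm:Bourgain-sum-product}: I would linearize the pinned distance map $\Delta_y$ on small balls and invoke Bourgain's discretized projection theorem (Theorem \ref{thm:Bourgain-projection}). The basic local observation is that on a ball $B(x_0,r)$ with $|x_0-y|\sim 1$,
\[
\Delta_y(x)=|x_0-y|+\langle x-x_0,\, e(x_0,y)\rangle+O(r^2),\quad e(x_0,y):=\tfrac{x_0-y}{|x_0-y|},
\]
so that at scales $\gtrsim r^2$ the nonlinear map $\Delta_y$ agrees with the linear projection $\Pi_{e(x_0,y)}$. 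Moreover, the direction map $y\mapsto e(x_0,y)$ is a local diffeomorphism away from $x_0$, hence preserves Frostman-type non-concentration (hypothesis \eqref{eq:non-concentration}) of any subset of $\R^2$ up to a fixed bi-Lipschitz constant.

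I would argue by contradiction. Assuming $\hdim(\mathcal{E}(X,\eta))>\beta$, Frostman's lemma produces, for each small dyadic $\delta$, $\delta$-separated sets $X_\delta\subset X$ and $Y_\delta\subset\mathcal{E}(X,\eta)$ of cardinalities $\ge\delta^{-\alpha+\kappa}$ and $\ge\delta^{-\beta-\kappa}$ satisfying $(\alpha-\kappa)$- and $(\beta+\kappa)$-dimensional non-concentration, for a small $\kappa=\kappa(\alpha,\beta)>0$ to be chosen. The defining property of $\mathcal{E}(X,\eta)$ gives $|\Delta_y(X_\delta)|_\delta\le\delta^{-\alpha/2-2\eta}$ for every $y\in Y_\delta$. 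After an application of Lemma \ref{lem:uniformization} and dyadic pigeonholing, at any intermediate scale $r\in[\delta^{1/2},1)$ I can locate a ball $B(x_0,r)\in\DD_r(X_\delta)$ whose rescaled intersection $\widetilde X=r^{-1}(X_\delta\cap B(x_0,r)-x_0)\subset [0,1]^2$ is a $(\delta/r)$-set of cardinality $\approx (\delta/r)^{-\alpha}$ with comparable non-concentration. After deleting from $Y_\delta$ a small neighborhood of $x_0$ (negligible by the non-concentration of $Y_\delta$) and pushing forward by $y\mapsto e(x_0,y)$, Theorem \ref{thm:Bourgain-projection} applied to $\widetilde X$ and the resulting direction set yields, for $y$ outside an exceptional $(\delta/r)^{\kappa_0}$-proportion, a lower bound $|\Pi_{e(x_0,y)}(\widetilde X)|_{\delta/r}\ge(\delta/r)^{-\alpha/2-\eta_0}$ with $\eta_0=\eta_0(\alpha,\beta)>0$. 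By the linearization, $|\Delta_y(X_\delta)|_\delta\ge(\delta/r)^{-\alpha/2-\eta_0}$.

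The main obstacle is that a single such application does not suffice: the linearization constraint $r^2\lesssim\delta$ forces $r\ge\delta^{1/2}$, so $(\delta/r)^{-\alpha/2-\eta_0}\le\delta^{-(\alpha/2+\eta_0)/2}$, which is \emph{consistent} with the upper bound $\delta^{-\alpha/2-2\eta}$ for all $\eta_0<\alpha/2$. To overcome this I would iterate the scheme over a geometric cascade of scales $\delta=r_J<\cdots<r_1<r_0=1$ with $r_{j+1}^2\sim r_j$, so the linearization at scale $r_{j+1}$ is accurate on balls of radius $r_j$. At each level $j$ I pigeonhole a center $x_0^{(j)}\in X_\delta$, apply the scheme to the rescaled set at scale $r_{j+1}/r_j$, and harvest a per-level gain of $\eta_0$ in the rescaled projection exponent. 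The crux is a transversality statement: the contributions of distinct levels to $\Delta_y(X_\delta)$ must be essentially disjoint in the target line, i.e., pixels from different $x_0^{(j)}$ should not collapse onto each other. Provided this is controlled, the per-level gains accumulate into a lower bound of the form $|\Delta_y(X_\delta)|_\delta\ge\delta^{-\alpha/2-c\eta_0}$ for a positive proportion of $y\in Y_\delta$, contradicting the hypothesis when $\eta<c\eta_0/4$.

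The key technical challenges that must be addressed are, in decreasing order of difficulty: (i) establishing the multi-scale transversality, which amounts to ruling out that $\Delta_y$ collapses large portions of $X_\delta$ at different scales onto the same interval; (ii) controlling the union of exceptional direction sets across the $J\lesssim\log\log(1/\delta)$ levels, which forces $\kappa\ll\kappa_0\eta_0/J$ to keep the union small via a union bound; (iii) excluding pairs $(x_0^{(j)},y)$ with $|x_0^{(j)}-y|$ small, where the linearization remainder $O(r_j^2/|x_0^{(j)}-y|)$ blows up --- this is where the $\beta$-dimensional non-concentration of $Y_\delta$ is essential, since it ensures only a negligible proportion of $y$'s is excluded at each level.
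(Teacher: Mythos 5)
You have correctly identified the shape of the argument: linearize $\Delta_y$ on balls of radius $r$ down to resolution $\sim r^2$, feed the linearized projections into Bourgain's discretized projection theorem (Theorem~\ref{thm:Bourgain-projection}), and observe that a single application falls short so one must iterate over a cascade of scales satisfying the linearization constraint $\delta_j^2\lesssim\delta_{j+1}$. You also correctly locate the central difficulty in what you call ``multi-scale transversality'': in raw box-counting terms, the intervals produced by $\Delta_y$ on cells of $\DD_{\delta_j}$ at different levels can a priori pile up in the target line, so per-level gains in interval count need not compound multiplicatively. However, your proposal stops at naming this obstacle, and it is not a peripheral technicality --- it is precisely where the box-counting formulation breaks down, and there is no elementary patch.

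The paper dissolves the issue by phrasing the multiscale decomposition in terms of Shannon entropy rather than covering numbers. Proposition~\ref{prop:entropy-of-image-measure} bounds $H_\delta(F\mu)$ from below by a sum over scales of entropies of linearized projections of the conditional measures $\mu_{\DD_{\delta_j}(x)}$, averaged in $x$. The point is that the entropy chain rule, combined with concavity of entropy, gives the additivity over levels for free: the conditional entropy of $F\mu$ between $\DD_{\delta_j}$ and $\DD_{\delta_{j+1}}$ is bounded below by the average entropy of the linearized projected pieces (here the constraint $\delta_j^2\le\delta_{j+1}$ is used), and those conditional entropies sum exactly to $H_\delta(F\mu)$ --- no transversality between levels is needed. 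One then plugs in a measure/entropy form of Bourgain's projection theorem at each level, essentially as you envisage, and converts to a covering-number bound via $H_\delta(\rho)\le\log|\supp\rho|_\delta$. Two further discrepancies worth noting: the paper takes $J$ to be a fixed constant depending only on $\alpha,\beta$ (via $\delta_j=\delta^{c_j}$ with $c_{j+1}\le 2c_j$), not your $J\sim\log\log(1/\delta)$; and upgrading the resulting single-scale box-count estimate to a Hausdorff dimension bound on $\Delta_y(X)$ requires a more robust variant of Proposition~\ref{prop:entropy-of-image-measure} (\cite[Appendix~A]{Shmerkin20}), since the naive formula only controls lower Minkowski dimension, whereas your reduction ``$|\Delta_y(X_\delta)|_\delta\le\delta^{-\alpha/2-2\eta}$ for all $y\in Y_\delta$'' is not justified when the hypothesis is only a Hausdorff dimension upper bound.
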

In particular, taking $\beta<1=\alpha$, this provides a pinned version of Theorem \ref{thm:Bourgain-distance}.

Theorem \ref{thm:nonlinear-distance} follows from a general scheme that can be seen as a nolinear extension and refinement of Bourgain's projection theorem and its higher rank generalization by W. He. See \cite{Shmerkin20} for further discussion and precise statements. This scheme yields Theorem \ref{thm:nonlinear-distance} also for smooth norms of everywhere positive Gaussian curvature and $\ell^p$ norms for $p\in (1,\infty)$, as well as some partial extensions to higher dimensions; see \cite[Theorem 1.1]{Shmerkin20}.

Using the nonlinear adaptation of Bourgain's projection theorem (along with many other ideas), O. Raz and J. Zahl \cite{RazZahl21}  have recently obtained a further refinement of Theorem \ref{thm:nonlinear-distance}. They show that for every $\alpha\in (0,2)$ there is $\eta=\eta(\alpha)>0$ such that the set $\mathcal{E}(X,\eta)$ from \eqref{eq:nonlinear-distance} is \emph{flat}, which roughly means that it is contained in a union of a small set of lines, see \cite[Definition 1.4]{RazZahl21}. This is optimal since they also observe that Theorem \ref{thm:nonlinear-distance} is sharp in the sense that $\eta\to 0$ as $\beta\to 0$, but the sets that witness this are contained in a line (or a union of a small family of lines). Raz and Zahl also obtain a related single-scale distance set estimate involving only three non-collinear vantage points:
\begin{theorem}[{\cite[Theorem 1.9]{RazZahl21}}]
Given $\alpha\in (0,2)$ there is $\eta=\eta(\alpha)>0$ such that if $E\subset[0,1]^2$ satisfies the non-concentration estimate
\[
|E\cap B(p,r)|_\delta \le \delta^{-\eta} r^\alpha |E|_\delta,\quad p\in [0,1]^2, r\in [\delta,1],
\]
and $y_1,y_2,y_3\in [0,1]^2$ span a triangle of area $\ge \delta^\eta$, then $\max_{i=1}^3 |\Delta_{y_i}X|_{\delta}\ge \delta^{-\alpha/2-\eta}$.
\end{theorem}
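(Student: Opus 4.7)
The plan is to argue by contradiction: assume $|\Delta_{y_i}(E)|_\delta < \delta^{-\alpha/2-\eta}$ for all $i=1,2,3$, with $\eta=\eta(\alpha)>0$ to be chosen small, and derive a contradiction exploiting the triangle-area hypothesis. The first ingredient is a signed-area identity: for every $x\in[0,1]^2$,
\[
  [xy_1y_2] + [xy_2y_3] + [xy_3y_1] = [y_1y_2y_3] =: A,
\]
where $[\,\cdot\,]$ denotes signed area. By Cauchy--Schwarz, at each $x$ at least one of the three absolute areas $|[xy_iy_j]|$ is $\gtrsim A \ge \delta^\eta$, so the corresponding two-point map $\Phi_{ij}(x)=(|x-y_i|,|x-y_j|)$ has Jacobian $\gtrsim \delta^\eta$, and is $\delta^\eta$-bi-Lipschitz on the relevant piece of $E$ (after separating the two sides of the line $L_{ij}$). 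Partitioning $E$ accordingly and applying the bi-Lipschitz bound together with the product estimate $|\Phi_{ij}(E)|_\delta \le |\Delta_{y_i}E|_\delta\,|\Delta_{y_j}E|_\delta$ yields $|E|_\delta \lesssim \delta^{-\alpha - C\eta}$.

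On its own this pairwise counting is not enough: the non-concentration hypothesis taken at $r=\delta$ forces $|E|_\delta \ge \delta^{\eta-\alpha}$, matching the upper bound up to the $\delta^{-C\eta}$ slack, so the argument merely recovers the Falconer-type threshold $\max_i |\Delta_{y_i}E|_\delta \gtrsim \delta^{-\alpha/2}$. To produce the required $\delta^{-\eta}$-improvement I would invoke the single-scale version of the nonlinear projection theorem behind Theorem \ref{thm:nonlinear-distance}, together with the structural flatness refinement of Raz and Zahl: for some $\eta'=\eta'(\alpha)>0$, the exceptional set
\[
\mathcal{E}^\delta(\eta') = \{ y \in [0,1]^2 : |\Delta_y E|_\delta < \delta^{-\alpha/2-\eta'}\}
\]
is constrained to lie in a rigid ``flat'' configuration, namely a thin pencil of lines. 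Three vantage points inside such a configuration with pairwise triangle area $\gtrsim \delta^{\eta}$ must be incompatible once $\eta$ is chosen sufficiently smaller than $\eta'$, yielding the desired contradiction and the theorem with $\eta=\eta(\alpha)$ determined by $\eta'$.

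The main obstacle is the flatness input itself. Schematically I would linearize the nonlinear pinned-distance map at generic reference points, apply Bourgain's discretized projection theorem (Theorem \ref{thm:Bourgain-projection}) to the resulting linear families, and then use the inverse machinery from Theorems \ref{thm:Bourgain-sumset} and \ref{thm:inverse-thm} to promote the pointwise/local ``linearized'' exceptional behavior into a global structure theorem for $\mathcal{E}^\delta(\eta')$. The real technical difficulty---and the reason this does not follow directly from Theorem \ref{thm:nonlinear-distance} as stated---is to separate genuine non-concentration coming from the curvature of circles from accidental arithmetic alignment of $E$ with a single pencil of circles, which is precisely the theme running through the nonlinear Bourgain projection program discussed in \S\ref{subsec:nonlinear-Bourgain}.
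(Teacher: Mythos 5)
This is a survey paper: the result you are asked to prove is stated and attributed to Raz and Zahl \cite{RazZahl21}, and the paper offers no proof of it, so there is nothing to compare your argument against directly. I will therefore assess your proposal on its own terms.

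Your step (a) --- the signed-area decomposition, the pigeonhole that one of the three areas $|[xy_iy_j]|\gtrsim A$, the Jacobian lower bound for $\Phi_{ij}(x)=(|x-y_i|,|x-y_j|)$, and the resulting bound $|E|_\delta \lesssim \delta^{-C\eta}\max_i|\Delta_{y_i}E|_\delta^2$ --- is correct and is exactly the standard observation explaining why three non-collinear vantage points beat the train-track obstruction. You are also right that it only yields the trivial threshold $\max_i|\Delta_{y_i}E|_\delta\gtrsim\delta^{C\eta/2}|E|_\delta^{1/2}\gtrsim \delta^{-\alpha/2+O(\eta)}$, and that the non-concentration hypothesis is used in this crude form only at $r=\delta$.

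The problems are all in step (b). First, the deduction ``flatness of the exceptional set $\Rightarrow$ three points of $\mathcal{E}^\delta(\eta')$ cannot span a $\delta^\eta$-triangle'' does not hold as stated: the flatness notion described in the survey (and in \cite[Definition 1.4]{RazZahl21}) allows $\mathcal{E}^\delta(\eta')$ to be contained in a small \emph{union} of lines, and three points lying on three distinct lines can easily span a triangle of area $\sim 1$. So even granting the flatness theorem as a black box, the contradiction you aim for does not immediately follow; some additional argument (e.g., reducing to the case where all three $y_i$ effectively see $E$ through a common linearizing direction, or a bilinear/incidence step) is required. Second, the flatness theorem is not derived: the paragraph sketching ``linearize, apply Theorem \ref{thm:Bourgain-projection}, promote local information to global structure via Theorems \ref{thm:Bourgain-sumset} and \ref{thm:inverse-thm}'' describes the general program of \S\ref{subsec:nonlinear-Bourgain}, but none of those theorems as stated outputs a structural conclusion about the set of bad vantage points, and the passage from ``most linearized projections are large'' to ``the exceptional pins lie in a pencil of lines'' is precisely the hard content of Raz--Zahl. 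Third, there is a circularity concern: in \cite{RazZahl21} the flatness statement and Theorem 1.9 are both consequences of the same discretized Elekes--R\'{o}nyai/nonlinear-projection machinery; Theorem 1.9 is not a corollary of flatness, so routing the proof through flatness both introduces an extra gap (the triangle issue above) and does not actually shortcut the technical work. In short: the elementary half of the argument is fine, but the essential $\eta$-improvement is left unproved, and the reduction you propose to bridge the gap has a concrete logical hole.
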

Note that the quantitative non-collinearity hypothesis prevents the train-track almost counterexamples discussed above. These are just special cases of general theorems involving nonlinear projections and Blaschke curvature, see \cite{RazZahl21} for further details.

\subsection{Explicit estimates and sets of equal Hausdorff and packing dimension}

The improvements upon the natural threshold $1/2$ that we have seen so far all involve a tiny and unknown parameter $\eta$. The following were the first explicit bounds in the near critical regime:
\begin{theorem}[T.Keleti and P. Shmerkin, \cite{KeletiShmerkin19}]  \label{thm:KeletiShmerkin}
Let $E\subset\R^2$ be a Borel set with $\hdim(E)>1$. Then $\hdim(\Delta(E))>37/54\approx 0.685$, and there is $y\in E$ such that $\hdim(\Delta_y(E))>2/3$ and $\pdim(\Delta_y(E))=(2+\sqrt{3})/4\approx 0.933$.
\end{theorem}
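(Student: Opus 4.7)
The approach is to discretize at a small scale $\delta = 2^{-m}$ and perform a multiscale combinatorial analysis, in the same vein as the branching arguments used in Section \ref{sec:Furstenberg}. First I would invoke Lemma \ref{lem:uniformization} to replace $E$ by a $(T;\ell)$-uniform subset $E'$ of comparable size, with branching sequence $(N_j)_{j=0}^{\ell-1}$ satisfying $\sum_j \log_2 N_j \ge (1+\sigma) T\ell$ for some $\sigma>0$ since $\hdim(E) > 1$. A further iterated pigeonholing, in the style of the remarks following Lemma \ref{lem:uniformization}, allows me to assume each $N_j$ is of one of three types, up to an exponentially small loss: ``point'' $N_j \approx 1$, ``linear'' $N_j \approx 2^T$, or ``full'' $N_j \approx 2^{2T}$, occurring in proportions $p_0, p_1, p_2$ with $p_1 + 2p_2 \ge 1 + \sigma$.

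For a pinning point $y \in E$, I would then analyze $|\Delta_y(E')|_\delta$ scale by scale. On a cube $Q \in \mathcal{D}_{2^{-jT}}(E')$, the distance map $\Delta_y$ is well approximated by the linear projection onto the unit vector $u = (Q_0 - y)/|Q_0 - y|$, plus a quadratic correction whose Hessian is nondegenerate of order $|Q_0 - y|^{-1}$. At ``full'' scales the linear part alone contributes $\sim 2^T$ to the image for any $u$, while at ``linear'' scales it contributes $\sim 2^T$ provided $u$ is transverse to the local line direction at $Q$. The essential quantitative gain comes from scales where $Q \cap E'$ concentrates near a line parallel to $u$: here the linear part gives a poor contribution, but the curvature decouples the concentrated cluster transversally, and a Katz--Tao/Bourgain style discretized projection argument (in the spirit of Theorem \ref{thm:Bourgain-projection} and its nonlinear refinement Theorem \ref{thm:nonlinear-distance}) produces an extra $2^{\eta T}$ factor. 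By averaging the above bounds over $y \in E$, and quantifying how often a given $y$ can be ``aligned'' with the local line directions of $E'$, one selects a pinning point for which these bad coincidences happen at only a controlled fraction of scales.

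Summing the multiplicative contributions across scales and solving the resulting linear program in $(p_0, p_1, p_2)$ coupled with the per-scale curvature gain yields the explicit exponents $37/54$ (when one is free to vary $y$ over $E$, as in the unpinned bound) and $2/3$ (when $y$ is fixed). The main obstacle is making the curvature gain genuinely quantitative and compatible with the combinatorial losses: one cannot invoke Theorems \ref{thm:Bourgain-sum-product}--\ref{thm:Bourgain-projection} as black boxes, since their $\eta$ is non-explicit, and one instead needs either an explicit sum-product inequality (in the spirit of the Guth--Katz--Zahl bound cited after Theorem \ref{thm:Bourgain-sum-product}) or a direct geometric replacement tailored to the distance map that gives an effective transverse-concentration estimate. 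For the packing dimension bound $\pdim(\Delta_y E) \ge (2+\sqrt{3})/4$, the multiscale summation can be replaced by a single well-chosen scale, since packing dimension is governed by upper Minkowski counts along a subsequence of scales; the resulting two-parameter optimization (linear contribution versus curvature-transverse contribution, balanced against the type proportions $p_1, p_2$) has extremizer governed by a quadratic equation, which is what produces the $\sqrt{3}$.
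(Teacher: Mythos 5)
Your high-level strategy — discretize, uniformize via Lemma~\ref{lem:uniformization}, linearize the distance map scale by scale, and close with a combinatorial optimization over the branching types — matches the paper's framework, and you correctly diagnose the central obstacle: Bourgain-style projection theorems cannot be invoked as black boxes because their gain $\eta$ is non-explicit. However, you do not identify what actually replaces them, and the replacement you gesture at (``an explicit sum-product inequality or a direct geometric replacement tailored to the distance map that gives an effective transverse-concentration estimate'') is not what Keleti--Shmerkin do, nor is it clear that such a route would produce the exact constants $37/54$, $2/3$, $(2+\sqrt 3)/4$.

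The paper's argument sidesteps discretized sum-product entirely. The mechanism is: (i) apply the multiscale entropy lower bound of Proposition~\ref{prop:entropy-of-image-measure} to the family $(\Delta_y)_{y}$, which replaces the nonlinear projection by an average of \emph{linear} projections $P_{\pi_x(y)}$ on small cubes; (ii) use Orponen's radial projection theorem (\cite{Orponen19}) — this is where $\hdim(E)>1$ enters, and where the curvature of the distance map is actually exploited, because it is what makes the gradient direction $\pi_x(y)$ vary with $x$; (iii) control $\|P_\theta\rho\|_2^2$ by the $1$-energy $I_1(\rho)$ via H\"older plus the quantitative \emph{Marstrand $L^2$ projection theorem}, which is classical Plancherel-type Fourier analysis and is fully effective; and (iv) arrive at the bound \eqref{eq:final-entropy-bound}, at which point the problem reduces to choosing the scales $\delta_j$ depending on the uniformized branching structure so as to minimize $\sum_j \log I_1(\mu_{x,j})$. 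The constant $2/3$ is the solution to \emph{that} optimization, not to a per-scale sum-product gain. In particular, the per-scale gain in your sketch is obtained by a mechanism (curvature ``decoupling'' concentrated clusters via a Katz--Tao/Bourgain argument) that is absent from the proof. Because your route stops at an impasse you have noticed but not resolved, and because the fix you propose does not obviously yield the stated exponents, the proposal has a genuine gap: the missing ingredients are Orponen's radial projection estimate and Marstrand's $L^2$ theorem, together with the scale-choice optimization built on the branching numbers $N_j$ rather than on a three-type trichotomy.
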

Here $\pdim$ is packing dimension; we refer to \cite[\S 5.9--5.10]{Mattila95} for its definition and basic properties, and recall only that it lies between Hausdorff and Minkowski (box) dimensions. See \cite{Liu19, Shmerkin21b} for some further improvements, always assuming $\hdim(E)>1$.

The first explicit estimates in the critical case $\hdim(E)=d/2$ were obtained only very recently by the author and H. Wang \cite{ShmerkinWang21}:
\begin{theorem} \label{thm:ShmerkinWang-general}
Let $E\subset\R^d$ be a Borel set with $\hdim(E)=d/2$ where $d=2$ or $3$. Then $\sup_{y\in E} \hdim(\Delta_y E) \ge \alpha_d$, where $\alpha_2=(\sqrt{5}-1)/2\approx 0.618$ and $\alpha_3=9/16=0.5625$.
\end{theorem}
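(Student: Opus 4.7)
The plan is to combine the nonlinear version of Bourgain's projection theorem (Theorem \ref{thm:nonlinear-distance}) with a multiscale decomposition (via Lemma \ref{lem:uniformization}) in a way that converts the inexplicit $\eta$-gain into the concrete exponents $\alpha_d$ through an optimization over how the mass of a Frostman measure on $E$ splits between scales of "generic'' behavior and scales of affine-subspace concentration.

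First I would reduce to a discretized, single-scale problem: it suffices to exhibit, for every small $\varepsilon>0$, a point $y\in E$ and arbitrarily small $\delta$ with $|\Delta_y(E)|_\delta \gtrsim_\varepsilon \delta^{-\alpha_d+\varepsilon}$. Choose a Frostman measure $\mu$ on $E$ with exponent $s = d/2-\varepsilon$ and apply the uniformization lemma (adapted to measures, as remarked after Lemma \ref{lem:uniformization}) to pass to a $(T;\ell)$-uniform subset with branching sequence $(N_j)_{j=0}^{\ell-1}$ satisfying $\prod_j N_j \approx 2^{Ts\ell}$. By additional pigeonholing, classify each scale $j$ as \emph{regular} (the children of a typical parent cube satisfy the non-concentration bound \eqref{eq:non-concentration}) or \emph{structured} (the children concentrate in a thin tube around an affine line, or, in $d=3$, around a plane).

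At regular scales, the pinned distance map $\Delta_y$, linearized in a parent cube $Q$ away from $y$, behaves like a nonlinear projection to which Theorem \ref{thm:nonlinear-distance} applies. Outside a set of $y$'s of dimension $<\beta$ (which I take $\beta<d/2$ so that this bad set misses $\operatorname{supp}\mu$ generically), the incremental contribution to the branching of $\Delta_y\mu$ at that scale is at least $\tfrac{1}{2}\log_2 N_j + \eta T$. At structured scales, provided $y$ is geometrically well-placed (away from the concentration subspace, which for a typical $y\in\operatorname{supp}\mu$ is generic), the distance map sends a thin tube around an affine hyperplane to an interval whose length is comparable to the projection of the tube onto the radial direction at $y$; this gives an essentially transverse-linear contribution $\approx \log_2 N_j$. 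Summing across scales, if $\rho$ is the proportion of regular scales and $\sigma = s/\log_2 N_{\text{reg}}$ the normalized branching at them, one obtains a bookkeeping inequality of the form $\hdim(\Delta_y E) \ge \rho\left(\tfrac{s}{2}+\eta\right) + (1-\rho)\cdot m$ where $m$ quantifies the structured-scale gain. Balancing $\rho$ against the structural constraint that the total branching recovers $s = d/2$ produces a small linear program whose optimum equals $\alpha_2 = (\sqrt{5}-1)/2$ in dimension two (the equation $\alpha^2+\alpha=1$ appearing from the symmetric trade-off between regular and structured contributions) and $\alpha_3 = 9/16$ in dimension three (where a three-way split among generic scales, line-concentrated scales and plane-concentrated scales produces a different optimum).

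The main obstacle is the \emph{pinning}: Theorem \ref{thm:nonlinear-distance} is a statement about the exceptional set of $y$'s at a \emph{single} scale, but here I need one $y \in E$ that is simultaneously good at \emph{all} regular scales and well-placed relative to \emph{all} structured concentration subspaces. This requires the bad set at each scale to have Hausdorff dimension uniformly bounded away from $\hdim(E)$, with constants that remain controlled as scales proliferate; and it requires an auxiliary radial-projection estimate (in the spirit of Orponen's recent work cited at the end of Section \ref{subsec:inverse-thm}) guaranteeing that, even after both restrictions, a positive-dimensional set of admissible $y \in E$ remains. Making these two pigeonholes compatible, and tracking the dependencies of $\eta$ and the exceptional-set exponents throughout, is the technical heart of the argument.
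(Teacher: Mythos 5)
Your proposal correctly identifies several ingredients that do appear in the proof (uniformization, multiscale bookkeeping, optimizing the choice of scales, the difficulty of simultaneous pinning across scales), but the central mechanism you propose does not match what the paper actually does, and there is a genuine gap in the way you try to produce the explicit constant.

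The key issue is that you try to generate the numerical value $\alpha_2=(\sqrt 5-1)/2$ from Theorem~\ref{thm:nonlinear-distance} (the nonlinear Bourgain projection theorem) together with a regular/structured-scale linear program. This cannot work: Theorem~\ref{thm:nonlinear-distance}, like Bourgain's original projection theorem, gives only an inexplicit $\tfrac{\alpha}{2}+\eta$ with a tiny, effectively unknown $\eta$ (the paper stresses this point explicitly when discussing Theorem~\ref{thm:Bourgain-distance}). No accounting over scales with a per-scale gain of ``$\tfrac12\log_2 N_j+\eta T$ at regular scales, $\approx\log_2 N_j$ at structured scales'' can convert a microscopic $\eta$ into $0.618$. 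Moreover, the naive optimization your bookkeeping inequality suggests does not produce $\alpha^2+\alpha=1$: if regular scales contribute a $\tfrac12$-fraction and structured scales contribute everything, the adversary simply makes all scales regular and you are stuck at $\tfrac12$. The equation $\alpha^2+\alpha=1$ is asserted but never derived from the trade-off you set up.

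What the paper actually uses, and what you are missing, is a \emph{bootstrapping} argument on \emph{radial} projections, driven by a \emph{new explicit} linear projection theorem, not Bourgain's. One starts from Orponen's a priori radial projection bound \cite{Orponen19}, which gives radial projections $\pi_x\nu$ a Frostman exponent $\tfrac12-\e$. Then one applies the multiscale entropy formula (Proposition~\ref{prop:entropy-of-image-measure}) to the family $(\pi_y)_{y\in Y}$ itself, using the fact that $\tfrac{d}{dx}\pi_y(x)=\pi_y(x)^\perp$, so that bounds on radial projections feed back into improved bounds on radial projections. The per-scale input that makes this iteration converge to an explicit fixed point is the improved Kaufman theorem, Theorem~\ref{thm:Kaufman-improvement} (in its quantitative form \cite[Theorem 1.3]{OrponenShmerkin21}), which is emphatically \emph{not} recoverable from Theorem~\ref{thm:nonlinear-distance}. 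The golden mean arises as the fixed point of this bootstrap after the scale-choosing optimization, and the distance-set conclusion then follows because $\tfrac{d}{dx}\pi_y$ and $\tfrac{d}{dx}\Delta_y$ are rotations of one another. Your argument has no analogue of this iteration, no role for Theorem~\ref{thm:Kaufman-improvement}, and cites the wrong Orponen results (the ones at the end of \S\ref{subsec:inverse-thm} concern Assouad dimension and arithmetic sums, not radial projections). Without the bootstrap and the explicit Kaufman improvement, the argument cannot reach the stated exponents.
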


While these are the best currently known estimates for general Borel sets, for sets of equal Hausdorff and packing dimension we are able to prove the full strength of Falconer's conjecture:
\begin{theorem} \label{thm:ShmerkinWang-Ahlfors}
Let $E\subset\R^d$, $d\ge 2$, be a Borel set with $\hdim(E)=\pdim(E)=d/2$. Then $\sup_{y\in E}\hdim(\Delta_y E)=1$, and if $E$ has positive $d/2$-dimensional Hausdorff measure then the supremum is attained.
\end{theorem}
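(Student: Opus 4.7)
The plan is to exploit the joint regularity $\hdim(E)=\pdim(E)=d/2$ to produce a Frostman measure on $E$ with two-sided local dimension control, and then to bootstrap the nonlinear projection theorem of \S\ref{subsec:nonlinear-Bourgain} across scales to push the dimension of $\Delta_y E$ all the way up to $1$.

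The first step is to extract a \emph{quasi-Ahlfors regular} measure. For every $\e>0$, I would produce a compact $E_\e\subset E$ and a probability measure $\mu_\e$ on $E_\e$ that is simultaneously Frostman with exponent $d/2-\e$,
\[
\mu_\e(B(x,r))\le C_\e\, r^{d/2-\e},
\]
and satisfies a matching lower bound $\mu_\e(B(x,r))\ge c_\e\, r^{d/2+\e}$ on a set of scales $r\in[\delta,1]$ of density tending to $1$ as $\e\to 0$, for $\mu_\e$-typical $x$. The upper bound is standard Frostman. The lower bound on a density-one set of scales is what $\pdim(E)\le d/2$ really encodes: a persistent failure would force the upper local dimension of $\mu_\e$, and hence $\pdim(\supp\mu_\e)$, above $d/2$. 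When $\mathcal{H}^{d/2}(E)>0$, one may take $\mu_\e$ to be the normalized restriction of $\mathcal{H}^{d/2}$ to a compact subset of $E$, so $\mu_\e$ is automatically supported on $E$.

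Next, by Lemma \ref{lem:Frostman-exp-to-small-fiber}, to obtain $\hdim(\Delta_y E)\ge 1-O(\e)$ for some $y$ it suffices to show that $D(\Delta_y\mu_\e,\infty)\ge 1-O(\e)$ on a $\mu_\e$-positive set of $y$. I would prove this by a multiscale analysis in the spirit of \S\ref{subsec:inverse-thm}: uniformize $\mu_\e$ via Lemma \ref{lem:uniformization} at some large $T$, and split the $\ell$ dyadic blocks into \emph{rich} scales (branching close to $2^{Td/2}$) and \emph{poor} scales. The two-sided regularity forces the poor scales to have density $O(\e)$. At each rich scale, the non-concentration hypothesis needed for a single-scale discretized version of Theorem \ref{thm:nonlinear-distance} is satisfied, so a fixed $\eta$-gain accrues to the Frostman exponent of $\Delta_y\mu_\e$ at that scale, with an exceptional set of $y$ of dimension at most some $\beta$. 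Choosing $\beta<d/2-\e$ makes these exceptional sets $\mu_\e$-null, so for $\mu_\e$-typical $y$ (which lie in $E$ when $\mathcal{H}^{d/2}(E)>0$) the contributions of rich scales add up to drive the Frostman exponent of $\Delta_y\mu_\e$ above $1-O(\e)$.

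The main obstacle is obtaining a scale-by-scale $\eta$-gain that does not collapse as the running Frostman exponent approaches $1$. This requires an $L^q$, rather than $L^\infty$, nonlinear analogue of Theorem \ref{thm:nonlinear-distance} in the spirit of Theorem \ref{thm:dyn-ssm-smoothening}: a quantitatively non-concentrated "perturbing" measure at scale $\delta$ must genuinely smoothen the pushforward $\Delta_y\mu_\e$ at that scale. Producing such a smoothening statement uniformly in the current exponent, and then chaining the per-scale gains through the multiscale decomposition while controlling the accumulation of exceptional $y$, is the technical heart of the argument; the $\hdim=\pdim$ hypothesis is exactly what guarantees that the hypotheses of the per-scale theorem remain satisfied at almost every scale down to $\delta$, and hence that the bootstrap actually reaches the conjectural exponent $1$.
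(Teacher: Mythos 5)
Your proposal correctly identifies the role of the hypothesis $\hdim(E)=\pdim(E)$ --- it lets you find a Frostman measure whose branching is close to $d/2$-regular at a density-one set of scales --- and you are right that the problem reduces to pushing a Frostman exponent for $\Delta_y\mu_\e$ towards $1$. But the engine you propose for doing this has a genuine gap, and it is not the one the paper actually uses. Applying Theorem~\ref{thm:nonlinear-distance} (or a single-scale discretization of it) at each rich scale does not ``accrue a fixed $\eta$-gain to the running Frostman exponent'': that theorem always gives a lower bound of the form $\alpha/2+\eta$ relative to the \emph{ambient} dimension $\alpha$, so iterating it across scales cannot lift the exponent beyond $\alpha/2+\eta$ towards $1$. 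You acknowledge this by asking for an $L^q$ nonlinear smoothening analogue in the spirit of Theorem~\ref{thm:dyn-ssm-smoothening}, but no such theorem is available, and producing one is not a technicality --- you have deferred the entire content of the proof to an unproved auxiliary result. There is also the problem of accumulating exceptional sets of $y$: the exceptional set is re-chosen at every scale, and unlike in the rigid self-similar setting there is no unique ergodicity to convert an almost-every statement into an everywhere one. (Minor point: Lemma~\ref{lem:Frostman-exp-to-small-fiber} gives upper bounds on fibers, not lower bounds on images; the inference you want is just the mass distribution principle.)

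The paper's actual route is quite different. It does not iterate the nonlinear Bourgain theorem; instead it uses the multiscale entropy lower bound of Proposition~\ref{prop:entropy-of-image-measure}, first applied to \emph{radial} projections $(\pi_y)$ rather than to the distance maps. Since $\tfrac{d}{dx}\pi_y(x)=\pi_y(x)^\perp$, the formula feeds radial projection information back into itself, which enables a genuine bootstrapping: starting from Orponen's a priori $1/2-\e$ Frostman bound for $\pi_x\nu$ (valid for measures of dimension up to $1$ giving zero mass to lines), the exponent is improved step by step. The crucial linear-projection input that makes the bootstrap converge to $1$ in the Ahlfors-regular case is the quantitative improvement over Kaufman's theorem, Theorem~\ref{thm:Kaufman-improvement}, rather than Bourgain-type projection theorems, which only give an $\eta$-improvement over $\alpha/2$. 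Finally, the approximate Ahlfors regularity enters by making the localized measures $\mu_{x,j}$ uniformly of controlled $1$-energy, so the entropy bound \eqref{eq:final-entropy-bound} yields the full exponent $1$. So the overall multiscale/uniformization philosophy in your proposal is in the right spirit, but the specific mechanism is both incomplete and not the one in the paper.
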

If $\hdim(E)=\pdim(E)=\alpha$, then for each $\e>0$ there is $\mu\in\PP(E)$ such that
\[
r^{\alpha+\e} \lesssim_{\e} \mu(B(x,r)) \lesssim_{\e} r^{\alpha-\e}, \quad r\in (0,1], x\in \supp(\mu).
\]
Thus we can interpret this condition as a rough or approximate version of Ahlfors regularity (which corresponds to the case $\e=0$).

In the plane, Theorem \ref{thm:ShmerkinWang-Ahlfors} has several predecessors. In an influential article, Orponen \cite{Orponen17} proved that if $E$ is Ahlfors regular of dimension $1$, then the \emph{packing} dimension of $\Delta(E)$ is $1$. In \cite{Shmerkin19b}, assuming that $\hdim(E)>1$, we showed that there is $y\in E$ with $\hdim(\Delta_y E)=1$; this result was recovered and made more quantitative in \cite{KeletiShmerkin19}. Extending the proof to the critical case $\hdim(E)=1$ and to higher dimensions required new ideas; we sketch some of them in \S\ref{subsec:kaufman-bootstrapping}.

\subsection{A multiscale formula for the entropy of projections}
\label{subsec:multiscale-entropy}

A common theme through the proofs of Theorems \ref{thm:nonlinear-distance}, \ref{thm:KeletiShmerkin}, \ref{thm:ShmerkinWang-general} and \ref{thm:ShmerkinWang-Ahlfors} is the use of a lower bound for the entropy of projections in terms of multiscale decompositions. Recall that the Shannon entropy of $\mu\in\PP(\R^d)$ with respect to a partition $\mathcal{A}$ of $\R^d$ is
\[
H(\mu;\mathcal{A}) = \sum_{A\in\mathcal{A}} \mu(A)\log(1/\mu(A)).
\]
This quantity measures how uniform the measure $\mu$ is among the atoms $A\in\mathcal{A}$. A basic property is that $H(\mu;\mathcal{A})\le \log|\mathcal{A}|$, and in particular
\begin{equation} \label{eq:entropy-dyadic}
H_\delta(\mu):=H(\mu;\DD_\delta) \le \log|\supp\mu|_{\delta}.
\end{equation}
Given a measure $\mu$ and a set $X$ with $\mu(X)>0$, we denote $\mu_X = \tfrac{1}{\mu(X)}\mu|_X\in\PP(X)$. Finally, fix a $C^2$ map $F:U\supset [0,1]^d\to\R$ with no singular points, and let $\theta(x)=\nabla F(x)/|\nabla F(x)|$.
\begin{prop} [{\cite[Proposition A.1]{Shmerkin20}}] \label{prop:entropy-of-image-measure}
Let $\mu\in\PP([0,1)^d)$, let $1>\delta_0>\delta_1>\ldots>\delta_J=\delta$ be a sequence with $\delta_j^2 \le \delta_{j+1}$, $0\le j<J$. Let $F$ be as above. Then, denoting orthogonal projection in direction $\theta$ by $P_\theta(x)=\langle x, \theta\rangle$,
\begin{equation} \label{eq:lower-bound-entropy}
H_\delta(F\mu) \ge - C_{F,d} J   + \int \sum_{j=1}^{J}  H_{\delta_{j+1}}\left(P_{\theta(x)}\mu_{\DD_{\delta_j}(x)}\right) \,d\mu(x).
\end{equation}
\end{prop}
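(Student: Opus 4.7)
My plan is to prove \eqref{eq:lower-bound-entropy} in three standard but careful steps: telescope $H_\delta(F\mu)$ across the intermediate scales $\delta_j$; transfer each conditional entropy on the target to an average over source cubes; and then replace $F$ by its affine linearization inside each cube, which by the hypothesis $\delta_j^2\le\delta_{j+1}$ is accurate at the finer scale $\delta_{j+1}$. (The sum will naturally come out as $\sum_{j=0}^{J-1} H_{\delta_{j+1}}(\cdots)$, which matches the claimed expression modulo an indexing shift.)

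\smallskip

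\emph{Telescoping.} Since $\DD_{\delta_{j+1}}$ refines $\DD_{\delta_j}$, the chain rule for Shannon entropy yields
\[
H_\delta(F\mu) \;=\; H_{\delta_0}(F\mu) + \sum_{j=0}^{J-1} H(F\mu;\DD_{\delta_{j+1}}\mid\DD_{\delta_j}) \;\ge\; \sum_{j=0}^{J-1} H(F\mu;\DD_{\delta_{j+1}}\mid\DD_{\delta_j}),
\]
so it suffices to show $H(F\mu;\DD_{\delta_{j+1}}\mid\DD_{\delta_j}) \ge \int H_{\delta_{j+1}}\bigl(P_{\theta(x)}\mu_{\DD_{\delta_j}(x)}\bigr)\,d\mu(x) - C_F$ for each $j$.

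\smallskip

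\emph{Target-to-source transfer.} Fix $j$. Because $F$ is $C^1$ on $[0,1]^d$, every $Q\in\DD_{\delta_j}$ has image $F(Q)$ of diameter $\le C_F\delta_j$, hence meets only $O_F(1)$ target cubes in $\DD_{\delta_j}$; equivalently, $H(F\mu_Q;\DD_{\delta_j})=O_F(1)$. Decomposing $(F\mu)|_A = F_\ast(\mu|_{F^{-1}(A)})$ over source cubes and applying concavity of $H_{\delta_{j+1}}$ gives
\[
H(F\mu;\DD_{\delta_{j+1}}\mid\DD_{\delta_j}) \;\ge\; \sum_{A,Q}\mu(Q\cap F^{-1}(A))\,H_{\delta_{j+1}}\!\bigl(F_\ast\mu_{Q\cap F^{-1}(A)}\bigr).
\]
For each fixed $Q$, applying the chain rule to $F\mu_Q$ with partitions $\DD_{\delta_j}\le \DD_{\delta_{j+1}}$ of the target, and using $H(F\mu_Q;\DD_{\delta_j})=O_F(1)$, gives
\[
\sum_A F\mu_Q(A)\,H_{\delta_{j+1}}\!\bigl(F_\ast\mu_{Q\cap F^{-1}(A)}\bigr) \;\ge\; H_{\delta_{j+1}}(F\mu_Q) - O_F(1).
\]
Multiplying by $\mu(Q)$, summing in $Q$ (using $\mu(Q)\,F\mu_Q(A)=\mu(Q\cap F^{-1}(A))$), and combining with the previous display yields the desired intermediate bound
\[
H(F\mu;\DD_{\delta_{j+1}}\mid\DD_{\delta_j}) \;\ge\; \int H_{\delta_{j+1}}\!\bigl(F\mu_{\DD_{\delta_j}(x)}\bigr)\,d\mu(x) - O_F(1).
\]

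\smallskip

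\emph{Linearization inside each cube.} For $y\in Q_j(x):=\DD_{\delta_j}(x)$ (of diameter $\le\sqrt{d}\,\delta_j$), Taylor's theorem gives
\[
F(y) \;=\; F(x) + |\nabla F(x)|\,P_{\theta(x)}(y-x) + R(x,y),\qquad |R(x,y)|\le C_F\delta_j^2\le C_F\delta_{j+1}.
\]
Thus $F\mu_{Q_j(x)}$ is, up to a pointwise perturbation of size $\le C_F\delta_{j+1}$, the affine image of $P_{\theta(x)}\mu_{Q_j(x)}$ under $t\mapsto F(x)+|\nabla F(x)|\,(t-P_{\theta(x)}x)$, whose derivative $|\nabla F(x)|$ is pinched in $[c_F,C_F]$. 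Since $H_\eta(\cdot)$ is stable up to an additive $O(1)$ under perturbations of size $\lesssim\eta$ and under bi-Lipschitz changes of coordinates with uniformly bounded constants, we conclude
\[
H_{\delta_{j+1}}\bigl(F\mu_{Q_j(x)}\bigr) \;\ge\; H_{\delta_{j+1}}\bigl(P_{\theta(x)}\mu_{Q_j(x)}\bigr) - C_F
\]
pointwise in $x$. Integrating against $\mu$, substituting into the previous display, and summing over $j=0,\dots,J-1$ produces \eqref{eq:lower-bound-entropy}.

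\smallskip

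\emph{Main obstacle.} The substantive point is the target-to-source transfer: a single preimage $F^{-1}(A)$ is a curved $\delta_j$-thick tube that cuts through many source $\delta_j$-cubes, so one cannot directly identify $(F\mu)_A$ with $F\mu_Q$ for any single $Q$. The resolution exploits the \emph{reverse} bound --- each source cube has image of diameter $\lesssim\delta_j$ and so hits only $O_F(1)$ target cubes --- which is exactly what keeps the per-scale loss additive and bounded ($O_F(1)$ per scale, totalling $O_F(J)$) rather than growing with $|\supp\mu|_{\delta_j}$. Once this additive bookkeeping is secured, the linearization step is routine thanks to the gap condition $\delta_j^2\le\delta_{j+1}$.
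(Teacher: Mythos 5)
Your proof is essentially correct and follows the standard route used for this kind of multiscale entropy lower bound (going back to the local entropy average method of \cite{HochmanShmerkin12} and used in \cite[Appendix A]{Shmerkin20}): telescope the entropy across scales via the chain rule, pass the conditional entropy to an average of entropies of cube-restricted measures using concavity and the fact that each $F\mu_Q$ occupies $O_F(1)$ cubes of $\DD_{\delta_j}$, and then linearize $F$ on each $\delta_j$-cube, with the gap condition $\delta_j^2\le\delta_{j+1}$ guaranteeing that the Taylor remainder is smaller than the fine scale. These are exactly the three moves in the cited proof, and your accounting of the per-scale $O_F(1)$ losses (hence the $-C_{F,d}J$ term) is also correct. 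You also correctly identified that the sum in the displayed statement should be over $j=0,\dots,J-1$; the indexing in the survey's restatement is off by one.

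One small point you gloss over: $\DD_{\delta_{j+1}}$ does not literally refine $\DD_{\delta_j}$ unless the $\delta_j$'s are taken dyadic, so the "chain rule" identities you invoke are really inequalities up to an additive $O(d)$ per scale (each $\delta_{j+1}$-cube meets at most $2^d$ cubes of $\DD_{\delta_j}$, and conversely). This is harmless here — the error is absorbed into $C_{F,d}J$ — but it should be flagged, since it affects both the telescoping step and the computation of $H(F\mu_Q;\DD_{\delta_{j+1}}\mid\DD_{\delta_j})$. A cleaner way to organize the transfer step, avoiding the double sum over $(A,Q)$, is to use concavity of the map $\rho\mapsto H(\rho;\mathcal{B}\mid\mathcal{A})$ directly: decompose $F\mu=\sum_Q\mu(Q)\,F\mu_Q$, get $H(F\mu;\DD_{\delta_{j+1}}\mid\DD_{\delta_j})\ge\sum_Q\mu(Q)\,H(F\mu_Q;\DD_{\delta_{j+1}}\mid\DD_{\delta_j})$, and then drop the conditioning at cost $H_{\delta_j}(F\mu_Q)=O_F(1)$ — this is equivalent to what you wrote but shorter.
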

A \emph{local} variant of this formula is a key element in the proof of Furstenberg's sumset conjecture in \cite{HochmanShmerkin12}. Orponen \cite{Orponen17} introduced this approach to the distance set problem. The method was further refined in \cite{Shmerkin19b,KeletiShmerkin19} - these papers highlighted the importance of choosing the scales $\delta_j$ depending on the combinatorics of the measure $\mu$, a point to which we will come back shortly. Thanks to \eqref{eq:entropy-dyadic}, the formula \eqref{eq:lower-bound-entropy} provides a lower bound on box-counting numbers $|F(\supp\mu)|_\delta$. In order to obtain Hausdorff dimension estimates, one needs a more robust (and technical) variant; we refer to \cite[Appendix A]{Shmerkin20} for details and here we stick with \eqref{eq:lower-bound-entropy} for simplicity.

In all our applications the number of scales $J$ is bounded while $\delta\to 0$, and so the error term is negligible. Note that if $Q\in\DD_{\delta_j}$, then $P_\theta Q$ is an interval of length $\lesssim_d \delta_j$, and hence $H_{\delta_{j+1}}(P_\theta\mu_Q)\le \log(\delta_j/\delta_{j+1})+C_d$.

Why is Proposition \ref{prop:entropy-of-image-measure} useful? A key feature is that it linearizes the nonlinear projection $F$; the hypothesis $\delta_j^2 \le \delta_{j+1}$ comes from linearization, and can be dropped if $F$ is linear. Another advantage is that it replaces the single projection $F\mu$ by an average of projections, taken over $x$ and, crucially, over the scales $(\delta_j)$.

\subsection{Theorems for radial and linear projections, and choice of scales}
\label{subsec:radial}

We sketch how Proposition \ref{prop:entropy-of-image-measure} is used to prove the bound $\hdim(\Delta^y E)> 2/3$ from Theorem \ref{thm:KeletiShmerkin}. By Frostman's Lemma \cite[Theorem 8.8]{Mattila95}, there are $\mu,\nu\in\PP(E)$ with
\[
\mu(B(x,r)),\nu(B(y,r))\lesssim r^\alpha, \quad x,y\in\R^2, r>0,
\]
where $\alpha>1$, and $X=\supp(\mu)$, $Y=\supp(\nu)$ are disjoint. We apply  Proposition \ref{prop:entropy-of-image-measure} to the family $(\Delta_y)_{y\in Y}$ and $\mu$. Since $\nabla\Delta_y(x)=\pi_x(y):=|y-x|/(y-x)$, Equation \eqref{eq:lower-bound-entropy} becomes
\begin{equation} \label{eq:entropy-of-dist-measure}
H_\delta(\Delta_y\mu) \ge - C J   + \int \sum_{j=1}^{J}  H_{\delta_{j+1}}\left(P_{\pi_x(y)}\mu_{\DD_{\delta_j}(x)}\right) \,d\mu(x).
\end{equation}
The scales $\delta_j$ will eventually be chosen in such a way that $\delta_{j+1} \le \delta^c \delta_j$ for some small constant $c$; in particular, this ensures that $J\le \lceil c^{-1}\rceil$ is bounded as $\delta\to 0$.

A radial projection theorem of Orponen \cite{Orponen19} yields $\int \|\pi_x\nu\|_p^p d\mu(x)<\infty$ for some $p=p(\alpha)>1$; it is here that the hypothesis $\hdim(E)>1$ gets used. Restricting $\mu$, we may thus assume that $\|\pi_x\nu\|_p \lesssim 1$ for all $x\in X$. H\"{o}lder's inequality and a quantitative form of Marstrand's projection theorem \cite[Theorem 9.7]{Mattila95} yield that, for any $x\in X$ and $\rho\in\PP(\R^2)$,
\begin{equation} \label{eq:Marstrand}
\pi_x\nu\left\{ \theta\in S^1: \| P_\theta\rho\|_2^2 \ge \delta^{-\e}I_1(\rho) \right\} \le 2^{c_p\e},
\end{equation}
where $I_1(\rho)=\int |x-y|^{-1}d\rho(x)d\rho(y)$ is the $1$-energy of $\rho$. Once the scales $\delta_j$ are fixed, we write $\mu_{x,j}$ for the convolution of $\mu_{\DD_{\delta_j}(x)}$ with a bump function at scale $\delta_{j+1}$, scaled up by a factor $\delta_j^{-1}$. Applying \eqref{eq:Marstrand} to $\rho=\mu_{x,j}$ for fixed $x$ and $1\le j\le J$, using that $J$ is bounded, and then Fubini, we eventually obtain a point $y\in Y$ and a set $X'\subset X$ with $\mu(X')\ge 1/2$, such that
\[
\|P_{\pi_x(y)}\mu_{x,j}\|_2^2 \le \delta^{-\e} I_1(\mu_{x,j}),\quad  x\in X', 1\le j \le J.
\]
Jensen's inequality can be used to bound
\[
H_{\delta_{j+1}}(P_{\pi_x(y)}\mu_{x,j}) \ge \log(\delta_j/\delta_{j+1}) - \log\|P_{\pi_x(y)}\mu_{x,j}\|_2^2 - C.
\]
Putting everything together, \eqref{eq:entropy-of-dist-measure} becomes (denoting a negligible error term by $\text{err}$)
\begin{equation}  \label{eq:final-entropy-bound}
H_\delta(\Delta_y\mu) \ge \log(1/\delta) - \sum_{j=1}^J \int_{X'} \log(I_1(\mu_{x,j})) \,d\mu(x) - \text{err}.
\end{equation}
Now the task has become to choose the scales $\delta_j$ (depending on $\mu$!) subject to the constrains $\delta_j^{1/2}\le \delta_{j+1} \le \delta^c \delta_j$, in such a way that $\log(I_1(\mu_{x,j}))$ is minimized on average. This is a combinatorial problem that becomes more tractable by first uniformizing $\mu$ by applying (the measure version of) Lemma \ref{lem:uniformization} and using the branching numbers $N_j$ as the combinatorial input. The issue to deal with is that, even though $\mu$ is $\alpha$-dimensional, many of the measures $\mu_{x,j}$ can be nearly atomic (if $N_j\approx 1$), which causes the $1$-energy to explode, so one seeks to merge the scales at which this happens with coarser scales at which $\mu$ looks like a large dimensional set. The value $2/3$ is the outcome of this combinatorial problem. Note that if $\mu$ is (roughly) Ahlfors regular, then so are the measures $\mu_{x,j}$, and then $\log(I_1(\mu_{x,j}))$ is uniformly small - so \eqref{eq:final-entropy-bound} also yields $\hdim(\Delta^y E)=1$ in this case.

We underline that even though linearization is at the core of this approach, it is still crucial that the distance map is \emph{nonlinear}, as this is what generates a rich set of directions $\pi_x(y)=\frac{d}{dx}\Delta_y(x)$ to work with  - curvature is still key!

The proofs of Theorems \ref{thm:Bourgain-distance}, \ref{thm:ShmerkinWang-general} and \ref{thm:ShmerkinWang-Ahlfors} follow a similar approach, but they each involve different radial and linear projection theorems. For example, Theorem \ref{thm:Bourgain-distance} relies (unsurprisingly) on Theorem \ref{thm:Bourgain-projection} and a \emph{different} radial projection bound of Orponen \cite{Orponen19}. One feature of Theorems \ref{thm:ShmerkinWang-general} and \ref{thm:ShmerkinWang-Ahlfors} is that they depend on \emph{new} radial and linear projection theorems; we briefly describe them in the next section, in the planar case.

\subsection{Improving Kaufman's projection Theorem, and radial projections}
\label{subsec:kaufman-bootstrapping}
Let $E\subset\R^2$ be as in Theorems \ref{thm:ShmerkinWang-general} or \ref{thm:ShmerkinWang-Ahlfors}. Fix $\e>0$ and, as before, let $\mu,\nu$ be Frostman measures on $E$ with exponents $1-\e$,  and disjoint supports $X, Y$.  If either $\mu$ or $\nu$ gives positive mass to a line, then $E$ intersects that line in dimension $\ge 1-\e$, which makes the distance set estimate immediate. So we may assume that $\mu,\nu$ give zero mass to all lines.

As our discussion in \S\ref{subsec:radial} suggests, it is key to understand radial projections $(\pi_y)_{y\in Y}$ first, and for this we use (again)
 Proposition \ref{prop:entropy-of-image-measure}. Note that $\tfrac{d}{dx}\pi_y(x) = \pi_y(x)^\perp$: this means that in order to estimate radial projections in this way, we need to rely on \emph{a priori} radial projection bounds. This opens the door to bootstrapping arguments, and this is exactly what is done to prove Theorems \ref{thm:ShmerkinWang-general} and \ref{thm:ShmerkinWang-Ahlfors}.

To start the bootstrapping, we need an a priori radial projection estimate for measures of dimension $\le 1$ that give zero mass to lines (note that if both measures are supported on the same line, the radial projections $\pi_x\nu$ are atomic for all $x\in X$; this is why we excluded this case at the beginning of the argument). This is provided by a result of Orponen from \cite{Orponen19}, that we alluded to earlier in connection with Theorem \ref{thm:Bourgain-distance}. In our setting it asserts that for a set of $x$ of $\mu$-measure $\ge 1/2$, the radial projection $\pi_x\nu$ satisfies a Frostman condition of exponent $1/2-\e$ (more precisely this holds after restricting $\nu$ further, depending on $x$).

The goal is to apply  Proposition \ref{prop:entropy-of-image-measure} to $(\pi_y)_{y\in Y}$ to bootstrap the parameter $1/2-\e$ to $1$ and to $(\sqrt{5}-1)/2$ in the Ahlfors regular and general case, respectively. Following the scheme of \S\ref{subsec:radial}, we end up needing a certain linear projection theorem, that we discuss next. A classical projection theorem of R.Kaufman \cite{Kaufman68} from 1968 asserts that if $X\subset\R^2$ is a Borel set and $s<\min(1,\hdim(X))$, then
\[
\hdim\{\theta\in S^1: \hdim(P_\theta X)\le s\} \le s.
\]
%Plugging (a quantitative form of) this into the approach we described barely fails to work: assuming radial projections are $s$ dimensional in a certain sense, it yields again that radial projections are $s$ dimensional.
It is natural to conjecture that Kaufman's theorem is not optimal, in that the bound $s$ on the right-hand side can be lowered, depending on $s$ and $\hdim(X)$.  When $s\le \hdim(X)/2+\eta(\hdim(X))$, such improvement follows from Theorem \ref{thm:Bourgain-projection-dim}, but the general case was established only very recently by T. Orponen and the author:
\begin{theorem}[{\cite[Theorem 1.2]{OrponenShmerkin21}}] \label{thm:Kaufman-improvement}
Given $s\in (0,1)$, $t\in (s,2]$ there is $\e=\e(s,t)>0$ such that if $X\subset\R^2$ is a Borel set with $\hdim(X)\ge t$, then
\[
\hdim\{\theta\in S^1: \hdim(P_\theta X)\le s\} \le s-\e.
\]
\end{theorem}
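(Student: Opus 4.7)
The plan is to argue by contradiction and combine the multiscale entropy lower bound (Proposition \ref{prop:entropy-of-image-measure}) with Bourgain's discretized projection theorem (Theorem \ref{thm:Bourgain-projection}), via a bootstrap on the exponent $s$. Suppose for contradiction that for some admissible pair $(s,t)$ and every $\e>0$ there is a Borel $X\subset\R^2$ with $\hdim X\ge t$ whose exceptional set $F=\{\theta\in S^1:\hdim P_\theta X\le s\}$ satisfies $\hdim F>s-\e$. Two applications of Frostman's lemma and a standard dyadic pigeonhole at a small scale $\delta=2^{-m}$ extract finite sets $X'\subset X$, $F'\subset F$, measures $\mu$ on $X'$ and $\sigma$ on $F'$, and an auxiliary parameter $\eta=\eta(\e)\to 0$, with $|X'|_\delta\ge\delta^{-t+\eta}$, $|F'|_\delta\ge\delta^{-s+\eta}$, where $\mu,\sigma$ satisfy the non-concentration hypothesis \eqref{eq:non-concentration} at exponents $t-\eta$, $s-\eta$, and $|P_\theta X'|_\delta\le\delta^{-s-\eta}$ for every $\theta\in F'$. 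Applying Lemma \ref{lem:uniformization} replaces $X'$ by a $(T;\ell)$-uniform subset with branching $(N_j)_{j=0}^{\ell-1}$, $m=T\ell$.

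Next, I would apply Proposition \ref{prop:entropy-of-image-measure} to the linear map $P_\theta$ at scales $\delta_j=2^{-jT}$; linearity lets one drop the constraint $\delta_j^2\le\delta_{j+1}$. This reduces bounding $H_\delta(P_\theta\mu)$ from below to controlling the entropies $H_{\delta_{j+1}}(P_\theta\mu_{x,j})$, where $\mu_{x,j}$ is the renormalized restriction of $\mu$ to the level-$j$ cube through $x$. Call a scale \emph{rich} if $N_j\ge 2^{(t-\eta')T}$; the upper Frostman bound forces at least a $(1-O(\eta'))$-fraction of scales to be rich. At a rich scale the rescaled $\mu_{x,j}$ lives on a $(t-\eta')$-dimensional set in $[0,1)^2$ and $F'$ still obeys non-concentration at exponent $\beta=s-\eta>0$, so Theorem \ref{thm:Bourgain-projection} yields, outside a $\sigma$-small subset of directions, a uniform gain $\eta_0=\eta_0(t,s)>0$:
\[
H_{\delta_{j+1}}\bigl(P_\theta\mu_{x,j}\bigr)\ge \left(\tfrac{t-\eta'}{2}+\eta_0\right)T\log 2.
\]
Summing over rich scales and comparing with the upper bound $H_\delta(P_\theta\mu)\le(s+\eta)m\log 2$ produces the desired contradiction in the range $s<\tfrac{t}{2}+\eta_0$.

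To cover the remaining range $s\ge \tfrac{t}{2}+\eta_0$ I would bootstrap on $s$. Once the conclusion is known for all $s'\le s-\rho$ with quantitative gain $\e(s',t)>0$, at each rich scale one can bound $H_{\delta_{j+1}}(P_\theta\mu_{x,j})$ using the inductive hypothesis (applied to the $(t-\eta')$-dimensional rescaled $\mu_{x,j}$ and to the restriction of $\sigma$ to a large subset of $F'$) instead of Bourgain. This replaces the local exponent $t/2+\eta_0$ by $s-\rho/2+\e(s-\rho,t)/2$, and for $\rho$ chosen small the aggregate over the $(1-O(\eta'))\ell$ rich scales pushes $H_\delta(P_\theta\mu)$ strictly above $s\log(1/\delta)$, again yielding a contradiction and enlarging the valid range of $s$. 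Finitely many iterations cover every $s<1$, $t\in(s,2]$, with the base case supplied by the Bourgain-only argument of the previous paragraph.

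The main obstacle is the quantitative bookkeeping in the bootstrap. At each step one must verify: (i) after rescaling, the local measures $\mu_{x,j}$ remain Frostman of exponent close to $t$ on $[0,1)^2$ in a uniform way, so the inductive hypothesis genuinely applies with the same $t$; (ii) restricting $\sigma$ to the good subset of directions at every rich scale preserves a Frostman condition with exponent close to $s-\eta$, rather than degrading as scales are combined; (iii) the errors from poor scales, from cubes discarded in uniformization, and from passing between an entropy bound and the genuine Hausdorff-dimension estimate can all be absorbed into the gain per step. Point (iii) is the most delicate and requires the robust, Hausdorff-compatible strengthening of Proposition \ref{prop:entropy-of-image-measure} described in \cite[Appendix A]{Shmerkin20}.
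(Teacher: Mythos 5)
The base case of your argument is sound in spirit: running Proposition \ref{prop:entropy-of-image-measure} with Bourgain's discretized projection theorem at each rich scale indeed recovers an $\varepsilon$-improvement over Kaufman in the range $s<t/2+\eta_0$, which is essentially the content of Theorem \ref{thm:Bourgain-projection-dim}. The uniformization step, the choice of scales adapted to $\mu$, and the Frostman/pigeonhole reductions are also consistent with how the actual argument in \cite{OrponenShmerkin21} is set up.

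The bootstrap step, however, does not close. The inductive hypothesis at level $s'=s-\rho$ tells you that for a $(t-\eta')$-dimensional measure, the set of directions with $\hdim(P_\theta(\cdot))\le s'$ has dimension at most $s'-\varepsilon(s',t)$; since $\sigma$ has dimension roughly $s>s'-\varepsilon(s',t)$, you conclude that $\sigma$-almost every $\theta$ satisfies $\hdim(P_\theta\mu_{x,j})>s'$, hence (modulo the passage from Hausdorff dimension to single-scale entropy, which is itself not automatic) $H_{\delta_{j+1}}(P_\theta\mu_{x,j})\gtrsim s' T$. This is strictly \emph{less} than $sT$. Summing over rich scales yields $H_\delta(P_\theta\mu)\gtrsim (s-\rho)(1-O(\eta'))m$, which sits \emph{below} the upper bound $sm$ coming from $\theta\in F$, so no contradiction results. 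Your claimed local exponent $s-\rho/2+\varepsilon(s-\rho,t)/2$ does not follow from the inductive statement: the gain $\varepsilon(s',t)$ controls the size of the exceptional set, not the amount by which the projection exceeds $s'$, and these are genuinely different quantities. Applying the theorem at level $s'<s$ only gives you information about the subset $\{\theta:\hdim(P_\theta X)\le s'\}\subset F$, not about $F$ itself, so the induction is circular in exactly the range where Bourgain alone is insufficient.

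The actual proof in \cite{OrponenShmerkin21}, as indicated at the end of \S\ref{subsec:kaufman-bootstrapping}, is not a bootstrap on $s$. Beyond Bourgain's theorem, uniformization, and adapted scale selection, it hinges on two ideas absent from your proposal: an \emph{incidence} version of the multiscale entropy lower bound (Proposition \ref{prop:entropy-of-image-measure}), which replaces the single-projection inequality by a genuine incidence estimate between $\delta$-cubes and $\delta$-tubes; and a dichotomy between the ``roughly Ahlfors regular'' and ``far from Ahlfors regular'' branching structures of $\mu$, with separate arguments in each regime (irregular branching being exploited directly for a gain, and approximate regularity feeding into a different mechanism). Neither of these is reducible to iterating the theorem at a lower exponent, and without them the gain per scale never exceeds $s$ once $s\ge t/2+\eta_0$.
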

The proof uses many of the ingredients we have discussed in this survey: Bourgain's projection theorem, the uniformization lemma, and choosing the scales depending on the given measure. There are also new ideas, including an ``incidence version'' of Proposition \ref{prop:entropy-of-image-measure} and a dichotomy between the ``roughly Ahlfors regular'' and ``far from Ahlfors regular'' situations, each requiring  different arguments.

A quantitative version of Theorem \ref{thm:Kaufman-improvement} (\cite[Theorem 1.3]{OrponenShmerkin21}) provides the input necessary to complete the bootstrapping step in the proofs of the planar cases of Theorems \ref{thm:ShmerkinWang-general} and \ref{thm:ShmerkinWang-Ahlfors}. To be more precise, so far we have been considering radial projections, but because $\tfrac{d}{dx}\pi_y$ and $\tfrac{d}{dx}\Delta_y$ are rotations of each other, the argument for distance sets can be completed in parallel. The golden mean $(\sqrt{5}-1)/2$ arises as the outcome of the combinatorial problem of optimizing the choice of scales (after uniformization).

%------
% Insert acknowledgments and information
% regarding funding at the end of the last
% section, i.e., right before the bibliography.
%------

\section*{acknowledgements}
Thanks to P\'{e}ter Varj\'{u}, Hong Wang and Josh Zahl for comments and corrections on earlier versions of the manuscript.

%------
% Insert the bibliography.
%------

\end{document}